\theoremstyle{definition}
\newtheorem{defn}{\indent\bf Definition}
\newtheorem{rem}[defn]{\indent\bf Remark}
\theoremstyle{plain}
\newtheorem{lemma}[defn]{\indent\bf Lemma}
\newtheorem{prop}[defn]{\indent\bf Proposition}
\newtheorem{thm}[defn]{\indent\bf Theorem}
\newtheorem{cor}[defn]{\indent\bf Corollary}
\newtheorem*{theorem*}{\indent\bf Theorem}
\newtheorem*{cor*}{\indent\bf Corollary}
\begin{document}

\title[Operator Algebra content of the Ramanujan-Petersson Problem]{The Operator Algebra content of the Ramanujan-Petersson Problem}
%[ unitary representations restricting to    left regular representation ]{unitary representations  restricting to the  regular representation of an almost normal subgroup}
\author[Florin R\u adulescu]{Florin R\u adulescu${}^*$
  \\ \\
Dipartimento di Matematica\\ Universita degli Studi di Roma ``Tor Vergata'}
\dedicatory{Dedicated to Professor Henri Moscovici on the occasion of his 70th anniversary}

\thispagestyle{empty}

\def\tilde{\widetilde}
\def\a{\alpha}
\def\T{\theta}
\def\PSL{\mathop{\rm PSL}\nolimits}
\def\SL{\mathop{\rm SL}\nolimits}
\def\PGL{\mathop{\rm PGL}}
\def\Per{\mathop{\rm Per}}
\def\GL{\mathop{\rm GL}}
\def\Out{\mathop{\rm Out}}
\def\Int{\mathop{\rm Int}}
\def\Aut{\mathop{\rm Aut}}
\def\ind{\mathop{\rm ind}}
\def\card{\mathop{\rm card}}
\def\d{{\rm d}}
\def\Z{\mathbb Z}
\def\R{\mathbb R}
\def\cR{{\mathcal R}}
\def\tPsi{{\tilde{\Psi}}}
\def\Q{\mathbb Q}
\def\N{\mathbb N}
\def\C{\mathbb C}
\def\bH{\mathbb H}
\def\Y{{\mathcal Y}}
\def\L{{\mathcal L}}
\def\G{{\mathcal G}}
\def\U{{\mathcal U}}
\def\H{{\mathcal H}}
\def\I{{\mathcal I}}
\def\A{{\mathcal A}}
\def\S{{\mathcal S}}
\def\O{{\mathcal O}}
\def\V{{\mathcal V}}
\def\D{{\mathcal D}}
\def\B{{\mathcal B}}
\def\K{{\mathcal K}}
\def\cC{{\mathcal C}}
\def\cR{{\mathcal R}}
\def\cX{{\mathcal X}}
\def\cM{{\mathcal M}}
\def\lnu{{\mathcal L(G \rtimes L^{\infty}(\G,\mu))}}
\def\gop{{G^{\rm op}}}
\def\lkm{{\mathcal L(\Gamma \rtimes L^{\infty}(K,\mu))}}
\def\ptimes{\mathop{\boxtimes}\limits}
\def\potimes{\mathop{\otimes}\limits}

\begin{abstract}

Let $G$ be a discrete countable group, and let $\Gamma$ be an almost normal subgroup. In this paper we investigate the classification  of     (projective) unitary representations $\pi$ of $G$ into the unitary group of the Hilbert space $l^2(\Gamma)$ that extend the left regular representation of $\Gamma$. Representations with this property are obtained by restricting to $G$  square integrable representations of a larger semisimple Lie group $\overline G$, containing $G$ as dense subgroup and such that $\Gamma$ is a lattice in $\overline G$. This type of  unitary representations of of $G$ appear in the study of automorphic forms.

We prove that the Ramanujan-Petersson problem regarding the action of the Hecke algebra on the Hilbert space of $\Gamma$-invariant vectors for the unitary representation $\pi\otimes \overline \pi$ is an intrinsic problem on the outer automorphism group of the von Neumann algebra
$\mathcal L(G \rtimes L^{\infty}(\mathcal G,\mu))$, where $\mathcal G$ is the Schlichting completion of $G$ and $\mu $ is the canonical Haar measure on $\mathcal G$.

 %The matrix coefficients of the representation $\pi$

 %We prove that the unitary representation of the Schlictling completion $\G$ of $G$, which by projection onto the $\Gamma$ invariant vectors determines the Hecke operators, is a tensor product favor in the canonical right representation of  $\G$ on the type II factor $\mathcal L(G \rtimes L^{\infty}(\G,\mu))$.

\end{abstract}

\maketitle

\renewcommand{\thefootnote}{}
\footnotetext{${}^*$ Member of the Institute of  Mathematics ``S. Stoilow" of the Romanian Academy}
\footnotetext{${}^*$
Florin R\u adulescu is supported in part by PRIN-MIUR and by a Grant of the Romanian National Authority
for Scientific Research, project number PN-II-ID-PCE-2012-4-0201.}
\footnotetext{${}^*$  email address radulesc@mat.uniroma2.it}

\section{Introduction, definitions, and main results}\label{intro}

Let $G$ be a discrete group and let $\Gamma$ be an almost normal subgroup.
In this paper we investigate the classification, up to unitary equivalence, of (projective)  unitary representations $\pi$ of $G$ on Hilbert spaces $H_\pi$,    with the property  that $\pi$ restricted to $\Gamma$  is unitarily equivalent  to the left regular representation of $\Gamma$.

 Such representations appear in the study of automorphic forms or Maass forms,  where one studies the associated vector spaces of  $\pi(\Gamma)$-invariant vectors (\cite{ma}, \cite {sa}, \cite {de}). The space of
 $\pi(\Gamma)$-invariant vectors, briefly referred to as the space of \emph{$\Gamma$-invariant vectors} in this paper, is naturally a Hilbert space. In general this is not a subspace of the original Hilbert space; it is related to the original Hilbert space if one considers a rigged Hilbert space structure (\cite{ge}) on $H_\pi$ (see e. g. the construction in \cite{Ra5}).

  The scalar product  on the space of $\Gamma$- invariant vectors corresponds to the Petersson scalar product in the case of automorphic forms (\cite{pe}). If the restriction of $\pi$ to $\Gamma$ admits a subspace $L$ such that the translates $\pi(\gamma)L, \gamma\in L$ are mutually orthogonal and generate  $H_\pi$, then the $\Gamma$-invariant vectors are naturally identified to $L$. This is not the most general case as there are situations (see bellow) when $\pi|_\Gamma$ is a multiple  of the left regular representation without admitting a subspace as above- the obstruction being the non-integrality of the Murray-von Neumann dimension of the von Neumann algebra generated by $\pi(\Gamma)$ (\cite{vN},  \cite{ghj}, \cite{AS}).

   One constructs, given such a unitary representation $\pi$ a new unitary representation $\tilde{\pi}$ of $G$, admitting proper $\Gamma$-invariant vectors, which extends to a profinite completion of $G$. The matrix coefficients of this new representation, corresponding to the subspace of $\Gamma$ invariant vectors, are the usual Hecke operators (\cite {bor}, \cite{hall}, \cite {Ra5}).

The primary aim of this paper is to reformulate in an operator algebra setting the classical problem of the Ramanujan-Petersson estimates for the Hecke operators. Putting the Ramanujan-Petersson problem into the operator algebra
context introduced in this paper has the advantage that the Hilbert
spaces acted by the unitary representations determining the Hecke
operators are naturally identified with the Hilbert algebras (\cite{Ta})
associated to the corresponding type II factors. This is obvious when considering the representation $\pi\otimes \overline{\pi}$. In this case the
$\Gamma$-invariant vectors are canonically identified with the type II$_1$ factor commutant $\pi(\Gamma)'$ (see e.g \cite{jo}) and the scalar product is determined by the $L^2$-space associated to the trace. This corresponds to the analysis of Hecke operators on Maass forms (\cite{Ra1},\cite{Ra3}).  We note that  different, but related approaches, are also considered in  the papers \cite{com}, \cite{ccm}, \cite{cho}.

 The unitary representations $\tilde{\pi\otimes \overline{\pi}}$, associated to the unitary representations $\pi\otimes \overline{\pi}$ considered above, are
induced by automorphism groups of the corresponding type II factors.
The additional structure, on the underlying unitary representations,
 is used to determine  canonical   tensor product
decompositions of such representations (see e.g. Corollary
\ref{faracoordinate}).

The data from a representation $\pi$ as above, which is used to construct Hecke operators, is transformed into data coming from a canonical unitary representation of the Schlichting completion of $G$. Splittings of canonical representations associated to $G$ are proven to be in one-to-one correspondence with unitary representations $\pi$, up to the obvious equivalence relation on the space of such representations.

The main source of such unitary (projective) representations arises from the square summable unitary representations $\tilde\pi$
of a semisimple Lie group $\overline{G}$, containing $G$ as a dense subgroup and so that $\Gamma$ is a lattice in $\overline{G}$ (see \cite{ghj}, Section 3.3),
into the unitary group $\mathcal U(H_\pi)$ of a Hilbert space $H_\pi$.
The restriction $\pi=\tilde{\pi}\vert_G$ has the property that $\pi\vert_\Gamma$ is a multiple of the
left regular representation, with multiplicity
$$D_\pi={\rm dim}_{\{\tilde\pi(\Gamma)\}''}(H_\pi)\in(0,\infty)$$
given by the Murray-von Neumann dimension of the
Hilbert space $H_\pi$ as left module over the von Neumann algebra generated by the image of the group representation. The dimension is proportional by
$\operatorname{Vol} (\overline{G}/\Gamma)^{-1}$ to the Plancherel coefficient of the square integrable representation $\tilde\pi$ (see \cite{ghj}, Section 3.3). In general the dimension $D_\pi$ is not always an integer (see e.g \cite{ghj},  \cite{Ra2} for more details on the non-integer case).

The classification of the unitary representations $\pi$ of the group $G$ with $D_\pi =1$ is analyzed in detail in the first sections.
This  corresponds to the case where $\pi|_\Gamma$ is unitarily equivalent to the left regular representation
(skewed by cocycle, if we start with a projective unitary representations).

In the last section we also consider the situation $D_\pi\ne 1$.  Although this is  similar to the previous case, it requires  additional formalism.
In  Theorem \ref{abstractsetting} (i) and formula (\ref{mostgeneral}), we construct the unitary representation
of $G$ that plays the essential role in the analysis of $D_\pi=1$ (cf. Theorem \ref{split} (ii)) in the equivalent description of a unitary representation $\pi$ as above.

For simplicity we assume that the groups $\Gamma, G$ have infinite, nontrivial conjugacy classes (the i.c.c. property \cite{Sa}), and hence that
all associated von Neumann algebras are type II$_1$ factors.
For a discrete group $N$ we denote by $\mathcal L(N)$ the associated von Neumann algebra (\cite{vN}, \cite{Sa}).

We prove that the classification problem of unitary  representations $\pi$ of $G$ as above, up to the  equivalence relation corresponding to  conjugation by unitary operators,
is determined by the first cohomology group
\begin{equation}\label{h1}
H^1_{\alpha}(G, \mathcal U(P))
\end{equation}
of the group $G$, with values in the unitary group $\mathcal U(P)$ of a type II$_1$ factor $P$, associated with a homomorphism $\alpha :G\rightarrow \operatorname{Aut}(P)$.

Both objects $P$ and $\alpha$ are constructed canonically,  starting from  a fixed unitary (projective) representation $\pi$ of $G$ as above,
through a process similar to an infinite, simultaneous, Jones's  basic construction (\cite{jo}), which we describe below (see formulae (\ref{p}) and (\ref{alpha11})).

%We recall  the definition of the canonical profinite completion $K$ of the group $\Gamma$, associated with the inclusion
%  $\Gamma\subseteq  G $.
For $\sigma$ in $G$, let
$$\Gamma_{\sigma} = \sigma\Gamma\sigma^{-1}\cap\Gamma$$
be the finite index subgroup of $\Gamma$ associated to $\sigma$.
Let $\S$ be the downward directed lattice of finite index subgroups of $\Gamma$, generated by the subgroups $\Gamma_{\sigma}$, $\sigma \in G$. We assume that $\S$ separates the elements of $\Gamma$. For simplicity, for reasons concerning the construction of the tower of commutant algebras below, we will assume that all  the groups in $\S$ have the i.c.c. property.

Let $K$ be the  profinite completion of the subgroup $\Gamma$ with respect to the family of finite index subgroups $\S$, and let $\mu$ be the corresponding Haar measure on $K$ (\cite{Sch}, \cite{Tz}).
Then, the type II$_1$ factor
\begin{equation}\label{p}
P = {\mathcal L (\Gamma \rtimes L^{\infty}(K,\mu))},
 \end{equation}
entering in the definition of the cohomology group describing the classification of unitary representations $\pi$ as  above, is
the reduced crossed product von Neumann algebra (\cite{vN},\cite {Op}) associated to the probability measure preserving action of $\Gamma$, by left translations on $(K,\mu)$.

 %
% Let $\mathcal{M}$ be the reduced, von Neumann algebra, crossed product ${L(G \rtimes L^{\infty}(\mathcal S,\mu))}$, with respect Haar measure on $\mathcal S$. Let $G^{\rm op}$ be the group $G$ with opposite multiplication acting canonically on $\mathcal{M}$.
 %

To construct the homomorphism $\alpha :G\rightarrow \operatorname{Aut}(P)$,  we identify the type II$_1$ factor $P$ with the output  at infinity in the simultaneous, infinite, iterated Jones's basic construction associated to all the inclusions
\begin{equation}\label{inc}
 \mathcal L(\Gamma_0)\subseteq \mathcal  L(\Gamma), \quad \Gamma_0\in\S.
\end{equation}
The homomorphism $\alpha$ is obtained  from a homomorphism  of $G$ into the automorphism group of a larger von Neumann algebra associated to an infinite, simultaneous Jones's basic construction for pairs of isomorphic subfactors (see Proposition \ref {doublecross}).

We represent  the terms of Jones's basic construction in formula (\ref{inc}) using commutant algebras.   Since we are working in the case
$D_\pi=1$, we can     use  the canonical (anti-)isomorphism of the algebra $\mathcal L(\Gamma)$ (see e.g \cite{Co},\cite{Sa}) with the commutant
 algebra $\pi (\Gamma)'$.

%We now describe directly the representation $\alpha$ into Aut$(P)$. After that we will give an equivalent description in terms of an $G^{\rm op}$-equivariant  splitting of the algebra $\mathcal{M}$.

Since the commutant von Neumann algebras
$\pi(\Gamma_{\sigma})'$   are type II$_1$ factors, it follows that, for $\Gamma_0\subseteq \Gamma_1$, $\Gamma_0,\Gamma_1\in \S$, the embeddings
$$\pi(\Gamma_1)'\subseteq\pi(\Gamma_0)'$$
are trace preserving.

Consider the type II$_1$ factor obtained, by completion, from the inductive limit of the  type II$_1$ factors $\pi(\Gamma_{0})'$,  $\Gamma_0 \in \S$:
 \begin{equation} \label{ainfty}
  \A_\infty=\overline{\mathop\bigcup\limits_{
  %\{e\} \leftarrow \Gamma_{0},
   \Gamma_{0}\in \S}  \pi(\Gamma_{0})^\prime}.
\end{equation}
In the above formula the closure is taken with respect to the weak topology, in the GNS construction associated to the unique trace on the tower of algebras.

Then $\A_\infty$ is a II$_1$ factor, anti-isomorphic to  the simultaneous Jones basic construction for all the inclusions
$$\pi(\Gamma_{\sigma})'' \subseteq\pi( \Gamma)'',\quad \sigma \in \Gamma.$$

\noindent
For $\sigma\in G$, let $K_\sigma$ be the closed subgroup of $K$ defined by the formula
\begin{equation}\label{ksigma}
K_\sigma
=K\cap \sigma K\sigma^{-1}.
\end{equation}
Obviously $K_\sigma$  is the closure, in the profinite completion, of the group  $\Gamma_{\sigma}$.

We use the letter $\mathcal R$ to denote right convolutors. Then
 $$\A_\infty\cong \mathcal R(\Gamma \rtimes L^{\infty}(K,\mu)).$$
In the above isomorphism, the Jones's projection $e_{\Gamma_\sigma}$, associated to  an inclusion as in formula (\ref{inc}), corresponds to the characteristic function
$$\chi_{K_\sigma}=\chi_{\overline{\Gamma_{\sigma}}}\in L^{\infty}(K,\mu).$$
By using the canonical anti-isomorphism between the algebras of left and respectively right convolutors (\cite{Co}), we obtain that the algebra $\A_\infty$, the  inductive limit of II$_1$ factors in formula (\ref{ainfty}), is anti-isomorphic to the type II$_1$ factor $\lkm.$
Thus, in the sequel, we let
$$\A_\infty= \mathcal L(\Gamma \rtimes L^{\infty}(K,\mu))=P.$$

Using the above anti-isomorphism, the type II$_1$ factor $\pi(\Gamma_{\sigma})'$ is identified with the type II$_1$ factor
\begin{equation} \label{asigma}
\mathcal A_{\sigma \Gamma}=\L(\Gamma  \rtimes l^\infty(\Gamma/ \Gamma_\sigma))\cong \{ \mathcal L(\Gamma),\chi_{K_\sigma}\}''\subseteq  \A_\infty.
\end{equation}
Clearly, using a Pimsner-Popa basis ([PP]) for the inclusion $\L(\Gamma_\sigma)\subseteq \mathcal L(\Gamma)$, we obtain that
\begin{equation}\label{asigmaform}
P \cong\L(\Gamma_\sigma\rtimes L^\infty(K_\sigma,\mu) ) \otimes B(\ell^2(\Gamma/ \Gamma_\sigma)).
\end{equation}

 %Note that this is the same von Neumann algebra, as the one considered in (\cite{OP}), where in the end of the paper, the unicity of the Cartan subalgebra $L^{\infty}(K,\mu)$ was proved.

\noindent Denote the Hilbert space of the unitary representation $\pi$ by $H_{\pi}$.    We consider  the homomorphism  ${\rm Ad\, }\pi$ of $G$ into the inner automorphism group of  the algebra $B(H_{\pi})$.
We observe that   the    automorphism ${\rm Ad\, }\pi(\sigma)$ leaves invariant the upward directed union of commutants
   $\mathop{\bigcup}\limits_{\Gamma_{\sigma}\in \S} \pi(\Gamma_{\sigma})'$.
    Indeed,
    for $\sigma\in G$, we have that
    $${\rm Ad\, }\pi(\sigma) [\pi(\Gamma_{\sigma^{-1}})'']=
    \pi(\Gamma_{\sigma})''.$$
    Hence
     $${\rm Ad\, }\pi(\sigma)[\pi(\Gamma_{\sigma^{-1}})']= \pi(\Gamma_{\sigma})',$$
and therefore
$${\rm Ad\, }\pi(\sigma)[\pi(\Gamma_{\sigma^{-1}} \cap \Gamma_0)']=
   \pi(\Gamma_{\sigma}\cap\sigma\Gamma_0\sigma^{-1})', \quad \Gamma_0\in\S, \sigma \in G.$$

Consequently, the unitary action ${\rm Ad\, }\pi$ induces a homomorphism $\alpha:G\rightarrow{\rm Aut}(\A_\infty)$, obtained by requiring that
     \begin{equation}\label{alpha11}
     \alpha|_{\pi(\Gamma_{\sigma^{-1}} \cap \Gamma_0)'}=
     {\rm Ad\, }\pi(\sigma)|_{\pi(\Gamma_{\sigma^{-1}} \cap \Gamma_0)'},\quad\Gamma_0\in \S,\sigma\in G.
     %: \{\pi(\Gamma_{\sigma^{-1}} \cap \Gamma_0)\}'
     %\rightarrow
     %\{\pi(\Gamma_{\sigma}\cap\sigma\Gamma_0\sigma^{-1})\}'
     \end{equation}

    \noindent Using the above mentioned anti-isomorphism between the type II$_1$ factors
 $\mathcal R(\Gamma \rtimes L^{\infty}(K,\mu))$ and $P=\lkm$, we construct from formula (\ref{alpha11})   a canonical  homomorphism
 $\alpha : G\rightarrow \operatorname{Aut}(P)$ which
defines the classifying cohomology group $H^1_{\alpha}(G, \mathcal U(P))$.

We construct below a canonical  subgroup of $\operatorname{Out}(P)$. The unitary representations
 $\pi$ as above are, up to unitary equivalence,   in one-to-one correspondence with the liftings of the above mentioned  subgroup to
  $\operatorname{Aut}(P)$ (see e.g. \cite{Co} for the relevant definitions concerning groups of automorphisms of von Neumann algebras).
\begin{defn}\label{outing}
For $\sigma\in G$, let $\theta_\sigma:\Gamma_{\sigma^{-1}}\rightarrow\Gamma_\sigma$ be the group isomorphism induced by $\sigma$. Using this isomorphism and formula (\ref{asigmaform}),  it follows that any bijection between $\Gamma/\Gamma_{\sigma^{-1}}$ and $\Gamma/\Gamma_{\sigma}$ induces an isomorphism
 \begin{equation}\label{thetas}
 \tilde {\theta_\sigma}\in {\rm Aut}(P),
 \end{equation}
 with $P$ as in \eqref{p}.
Then the isomorphisms $\tilde {\theta_\sigma}$ define a map
 \begin{equation}\label{outer}
  \Phi:G\rightarrow  {\rm Out}(P)={\rm Aut(P)}/ {\rm Int}(P).
  \end{equation}
 %We refer to  [Co] for the definition of the Out$(P)$ group.
  This map is canonical, and   it is independent on  the choice of a Pimsner-Popa basis made in formula (\ref{asigmaform}).
 \end{defn}

The classification of the representations $\pi$, up to unitary equivalence, is  equivalent  to the classification of the liftings of the map  $\Phi$ to
$\operatorname{Aut}(P)$.
In particular, if there exists a unitary representation $\pi$ as above, then $\Phi$ is a group homomorphism.  In Section \ref{unicitatea} we will prove the following:

 \begin{thm}\label{liftout}
Suppose $D_\pi=1$ and let $\Phi$ be as in \eqref{outer}. Then:
 \item {(i)} Any unitary representation $\pi$ of $G$ as above defines a lifting
$$\alpha_\pi =(\alpha_\pi (g))_{g\in G}$$
of $\Phi$ to a group homomorphism $\alpha_{\pi}: G \rightarrow \operatorname{Aut}(P)$, such that:
   \begin{itemize}
  \item[(a)]
For every $\sigma \in G$, $\alpha_\pi(\sigma)$ maps $\mathcal A_{\sigma^{-1} \Gamma}$ onto
 $\mathcal A_{\sigma\Gamma}$ (see ( \ref {asigma})).
  \item[(b)] $\alpha_\pi|_\Gamma={\rm \ Id}_P .$
  \end{itemize}
\item{(ii)} For  any two representations  $\pi,\pi'$ of $G$ as above, the corresponding liftings  $\alpha_\pi$, $\alpha_{\pi'}$ are cocycle conjugated.
\item{(iii)}
%By hypothesis, we may assume   that $\pi,\pi'$ act on the same Hilbert space and that $\pi|_\Gamma=\pi'_\Gamma$.
Two  unitary representations  $\pi$ and $\pi'$ of $G$ are unitarily equivalent if and only if the corresponding liftings $\alpha_\pi$, $\alpha_{\pi'}$ are conjugated by an automorphism in $\operatorname{Int} (P)$.
\end{thm}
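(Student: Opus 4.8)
The plan is to build $\alpha_\pi$ directly from the inner action ${\rm Ad}\,\pi$ on the tower $\A_\infty$, to recognise it as a lifting of $\Phi$ by comparing it on the natural ``core'' subfactors with the model automorphisms $\widetilde\theta_\sigma$, and then to push the dependence on $\pi$ into cocycle data over $G$. For (i), take for $\alpha_\pi$ the homomorphism $\alpha$ of \eqref{alpha11}, read inside $P$ through the anti-isomorphism $\A_\infty\cong P$. It is a genuine group homomorphism because ${\rm Ad}\,\pi$ is one --- the $2$-cocycle of the projective representation $\pi$ is annihilated by ${\rm Ad}$. Property~(a) is precisely the identity ${\rm Ad}\,\pi(\sigma)\big[\pi(\Gamma_{\sigma^{-1}})'\big]=\pi(\Gamma_\sigma)'$ verified before \eqref{alpha11}, together with the identification $\pi(\Gamma_\sigma)'\cong\A_{\sigma\Gamma}$ of \eqref{asigma}. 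To see that $\alpha_\pi$ lifts $\Phi$, fix $\sigma$ and compare $\alpha_\pi(\sigma)$ with a representative $\widetilde\theta_\sigma$ of $\Phi(\sigma)$: by \eqref{asigma}--\eqref{asigmaform} both carry $\A_{\sigma^{-1}\Gamma}$ onto $\A_{\sigma\Gamma}$, and both induce on the core factor $\mathcal L\big(\Gamma_{\sigma^{-1}}\rtimes L^\infty(K_{\sigma^{-1}},\mu)\big)$ of $P$ the $*$-isomorphism onto $\mathcal L\big(\Gamma_{\sigma}\rtimes L^\infty(K_{\sigma},\mu)\big)$ determined by $\theta_\sigma$ --- for $\alpha_\pi(\sigma)$ because ${\rm Ad}\,\pi(\sigma)$ restricts on $\pi(\Gamma_{\sigma^{-1}})''$ to the group isomorphism $\theta_\sigma$ and the basic-construction identification is $\theta_\sigma$-equivariant. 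Hence $\alpha_\pi(\sigma)$ and $\widetilde\theta_\sigma$ differ by an automorphism of $P$ fixing that core pointwise, and since $P\cong\mathcal L\big(\Gamma_\sigma\rtimes L^\infty(K_\sigma,\mu)\big)\otimes B\big(\ell^2(\Gamma/\Gamma_\sigma)\big)$ with the core as first tensor factor, such an automorphism is inner; therefore $[\alpha_\pi(\sigma)]=\Phi(\sigma)$.

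To arrange (b), observe that for $\gamma\in\Gamma$ one has $\Gamma_\gamma=\Gamma_{\gamma^{-1}}=\Gamma$ and $K_\gamma=K$, so $\widetilde\theta_\gamma$ is the inner automorphism given by the canonical crossed-product unitary $u_\gamma\in P$; thus $\Phi|_\Gamma$ is trivial and, by the previous paragraph, $\alpha_\pi|_\Gamma$ takes values in $\Int(P)$. Writing $\alpha_\pi(\gamma)={\rm Ad}\,v_\gamma$ --- with the $v_\gamma$ chosen multiplicative over $\Gamma$, which is possible since $\pi|_\Gamma$ is a (cocycle-twisted) regular representation --- I would extend $(v_\gamma)_{\gamma\in\Gamma}$, using a Pimsner--Popa basis for $\mathcal L(\Gamma_\sigma)\subseteq\mathcal L(\Gamma)$, to an $\alpha_\pi$-cocycle $(w_\sigma)_{\sigma\in G}$ with $w_\sigma\in\A_{\sigma\Gamma}$, and then replace $\alpha_\pi$ by the cocycle perturbation $\sigma\mapsto{\rm Ad}(w_\sigma^{*})\circ\alpha_\pi(\sigma)$. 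This is again a homomorphism, still lifts $\Phi$ (an inner perturbation), still satisfies (a) (each $w_\sigma$ normalises $\A_{\sigma\Gamma}$), and is now the identity on $\Gamma$. \textbf{Producing this perturbing cocycle --- simultaneously multiplicative over $G$, equal to the $v_\gamma$ on $\Gamma$, and supported in the $\A_{\sigma\Gamma}$ --- is the step I expect to be the main obstacle}, and it is here that the compatibility of the Pimsner--Popa bases along the tower \eqref{inc} must be used.

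For (ii), both $\alpha_\pi$ and $\alpha_{\pi'}$ lift $\Phi$, so for each $\sigma$ there is $\widehat w_\sigma\in\mathcal U(P)$ with $\alpha_{\pi'}(\sigma)={\rm Ad}(\widehat w_\sigma)\circ\alpha_\pi(\sigma)$; since both are homomorphisms, $(\widehat w_\sigma)$ is automatically an $\alpha_\pi$-cocycle up to a scalar $2$-cocycle $z$ on $G$. To remove $z$ --- so that $\alpha_\pi$ and $\alpha_{\pi'}$ are genuinely cocycle conjugate --- I would use the explicit intertwining unitary $u(\sigma)=\pi'(\sigma)\pi(\sigma)^{*}$: once $\pi$ and $\pi'$ are normalised to carry the same projective cocycle and the same restriction $\lambda$ to $\Gamma$, $u$ is trivial on $\Gamma$, is a genuine ${\rm Ad}\,\pi$-cocycle, and its conjugation action on $\A_\infty$ equals $\alpha_{\pi'}(\sigma)\circ\alpha_\pi(\sigma)^{-1}$; comparing $u(\sigma)$ with its representative $\widehat w_\sigma$ exhibits $z$ as a coboundary, and absorbing it gives $\widehat w_{\sigma\tau}=\widehat w_\sigma\,\alpha_\pi(\sigma)\big(\widehat w_\tau\big)$.

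For (iii), both $\alpha_\pi$ and $\alpha_{\pi'}$ are built after identifying the underlying Hilbert spaces with $\ell^2(\Gamma)$ so that the restrictions to $\Gamma$ become the left regular representation $\lambda$ (this identification is what produces the anti-isomorphism $\mathcal L(\Gamma)\cong\pi(\Gamma)'$), and changing it replaces $\alpha_\pi$ by an $\Int(P)$-conjugate, since two such identifications differ by a unitary in $\lambda(\Gamma)'=\cR(\Gamma)\subseteq\A_\infty=P$. With these normalisations a unitary equivalence $\pi'={\rm Ad}(V)\circ\pi$ satisfies $V\lambda_\gamma V^{*}=\lambda_\gamma$ for all $\gamma\in\Gamma$, hence $V\in\cR(\Gamma)\subseteq P$, and then $\alpha_{\pi'}(\sigma)={\rm Ad}(V)\circ\alpha_\pi(\sigma)\circ{\rm Ad}(V)^{-1}$ with ${\rm Ad}(V)\in\Int(P)$. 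Conversely, if $\alpha_\pi$ and $\alpha_{\pi'}$ are conjugate by some ${\rm Ad}\,u\in\Int(P)$, the reconstruction of a representation from a lifting of $\Phi$ (Theorem~\ref{split}), which respects $\Int(P)$-conjugacy, shows that $\pi'$ is unitarily equivalent to the representation reconstructed from $\alpha_{\pi'}$, hence to the one reconstructed from $\alpha_\pi$, which is $\pi$; so $\pi\cong\pi'$.
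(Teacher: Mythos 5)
The decisive problem is your treatment of (i)(b). You claim that $\Phi|_\Gamma$ is trivial in $\Out(P)$, that consequently $\alpha_\pi|_\Gamma$ takes values in $\Int(P)$, and you then try to remove it by a cocycle perturbation, flagging the construction of the perturbing cocycle as the main obstacle. Both the premise and the target are wrong. Because $\Gamma$ is i.c.c.\ and dense in $K$, one has $\mathcal L(\Gamma)'\cap P=\C 1$, so any automorphism of $P$ fixing $\mathcal L(\Gamma)$ pointwise while moving $L^\infty(K,\mu)$ is outer. For $\gamma\in\Gamma\setminus\{e\}$ this applies both to $\alpha_\pi(\gamma)$ --- by Corollary \ref{defalpha} one has $v_{\Gamma,\Gamma\gamma}=\chi_K$, hence $\alpha_\pi(\gamma)=\beta_\gamma|_{\chi_K\mathcal{M}\chi_K}$, which is trivial on $\mathcal L(\Gamma)$ but acts by translation by $\gamma$ on $L^\infty(K,\mu)$ --- and to $\tilde\theta_\gamma$, which differs from conjugation by the canonical group unitary of $\gamma$ in $P$ by exactly that translation. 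Thus $\Phi(\gamma)\neq 1$ in $\Out(P)$, $\alpha_\pi(\gamma)$ is outer, and no cocycle perturbation (which alters each $\alpha_\pi(\gamma)$ only by an inner automorphism) can achieve $\alpha_\pi|_\Gamma={\rm Id}_P$; a literal identity on all of $P$ would in fact contradict $\alpha_\pi$ being a lifting of $\Phi$. The property the construction actually has, and the only one the paper uses (Remark \ref{uback} (iv), the proof of Theorem \ref{heckes}, and the statement in Section \ref{intro} that the $\Gamma$-fixed vectors of $\alpha$ are exactly $\mathcal L(\Gamma)$), is that $\alpha_\pi(\gamma)$ acts identically on the subalgebra $\mathcal L(\Gamma)$, which is immediate from $\beta_\gamma$ being trivial on $\mathcal L(G)$; so the step you single out as the main obstacle is an attempt to prove something false, not a missing lemma. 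Separately, your argument that $\alpha_\pi(\sigma)$ lifts $\Phi(\sigma)$ rests on the asserted $\theta_\sigma$-equivariance of the basic-construction identification; this is precisely what the paper supplies by identifying the homomorphism of formula (\ref{alpha11}) with the matrix-unit construction of Corollary \ref{defalpha} through the intertwiner bimodules of Sections \ref{algebraization} and \ref{proof1to3}, so you must either prove that equivariance or invoke that identification.

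For (ii) and the forward half of (iii) you follow essentially the paper's route (the cocycle $V(\sigma)=\pi'(\sigma)\pi(\sigma)^{\ast}$ of formula (\ref{echivalenta}), and Remark \ref{unicx}), but the converse in (iii) is not proved: asserting that ``the reconstruction of a representation from a lifting respects $\Int(P)$-conjugacy'' assumes what has to be shown. If $\alpha_{\pi'}={\rm Ad}(u)\circ\alpha_\pi\circ{\rm Ad}(u)^{-1}$ for a general $u\in\U(P)$, Remark \ref{unicx} cannot be applied until one produces a unitary in $\mathcal L(\Gamma)$ conjugating the two operator-system representations $t$ and $t'$; this is the substance of Proposition \ref{unicity}, where the $G^{\rm op}$-invariance $\beta_g(wp^{\ast})=wp^{\ast}$ forces $wp^{\ast}\in\mathcal L(G)\cap D'=\mathcal L(\Gamma)$, and only then does $t'(\Gamma\sigma\Gamma)=x\,t(\Gamma\sigma\Gamma)\,x^{\ast}$ with $x\in\U(\mathcal L(\Gamma))$ yield $\pi\cong\pi'$. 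Without this reduction, an arbitrary inner conjugacy of the liftings gives no unitary intertwiner between $\pi$ and $\pi'$, so as it stands the proposal does not constitute a proof of the theorem.
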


The above statement proves that if such a representation $\pi$ exists, then the canonical obstruction (\cite{Co}, \cite{Jo}, \cite{CJ})
in $H^2(G,\mathcal U(P))$, associated to the homomorphism $\Phi$ into Out$(P)$, vanishes.

Let $\G$ be the locally compact, totally disconnected group obtained as the Schlichting completion of $G$ with respect to the subgroups in $\S$.
Consider the crossed product von Neumann algebra
\begin{equation}\label{defM}
{\mathcal M}=\mathcal L(G \rtimes L^{\infty}(\G,\mu)).
\end{equation}
Let $G^{\rm op}$ be the group $G$ with opposite multiplication.
Let $\beta : G \to \Aut(\cM)$ be the canonical homomorphism of $\gop$ into the automorphism group of $\cM$, which acts  by leaving $\mathcal L(G)$ invariant and acts
by composition with right translation on $L^{\infty}(\G, \mu)$.

To prove Theorem \ref{liftout}   we show that unitary representations $\pi$ as in the introduction are in one-to-one correspondence with tensor product splittings of the canonical right action $\beta =(\beta_g)_{g\in G}$. The case $D_\pi\ne1$ is outlined in Section \ref{appendix}.

This proves that the homomorphisms $\alpha$ as above are tensor product factors of the homomorphism $\beta$.  This is contained in following theorem and its corollary,  which give equivalent descriptions for representations $\pi$ as above. Then $G^{\rm op}$  acts canonically on $\mathcal{M}$ as follows: $G^{\rm op}$ leaves
$\mathcal L(G)$ invariant and acts by left multiplication on the infinite measure space $\G$.

To state the next result, which will be proved in Sections \ref{thesplitting11} and \ref{proof1to3}, we introduce the operator system
\begin{equation}\label{os}
\S\O=\S\O(\Gamma, G) = \C(G/\Gamma) \potimes_{\C(\Gamma\setminus G/\Gamma)} \C(\Gamma \setminus G).
\end{equation}

\begin{thm}\label{split}
The following statements  are  equivalent:

\item{(i)} There exists a (projective) unitary representation $\pi : G \to \U (H_{\pi})$ such that $\pi\vert_\Gamma$ is unitarily equivalent to the left regular representation of $\Gamma$.
%This hypothesis  implies automatically, by Jones's index theory ([Jo]),  that $[\Gamma : \Gamma_{\sigma}] = [\Gamma : \Gamma_{\sigma^{-1}}]$,\  for all $\sigma \in G$.

\item{(ii)} There exists a representation $t : \S\O \to \mathcal L(G)$ of the operator system $\S\O$, verifying the identities in the formulae
(\ref{id1}), (\ref{id2}), (\ref{id3}).

\item{(iii)} There exists a unitary representation $u$ of the group $G^{\rm op}$ into
$\mathcal L((G \times G^{\rm op}) \rtimes L^{\infty}(K, \mu))$, of the form
\begin{equation}\label{uform}
u(\sigma)=\chi_K(X^{\Gamma\sigma\Gamma} \otimes \sigma^{-1})\chi_K,\quad \sigma \in G,
\end{equation}
 where, for each double coset $\Gamma\sigma \Gamma$, $\sigma \in G$, the element $X^{\Gamma\sigma\Gamma}$ is a selfadjoint element in $\mathcal L(G) \cap l^2(\Gamma\sigma\Gamma)$.

%The second crossed product is a groupoid crossed product with respect to the  partial action of $G \times G^{\rm op}$ on $K$.
%The hypothesis automatically implies that the correspondence
%$$[\Gamma\sigma\Gamma] \to X^{\Gamma\sigma\Gamma},\sigma \in G$$ extends to a trace preserving, $\ast$ representation of the Hecke algebra $$\H_0=\C(\Gamma \setminus G / \Gamma),$$ into $\mathcal L(G)$.

\item{(iv)}
There exists a
% $G^{\rm op}$ - equivariant
 matrix unit $(v_{\Gamma\sigma_1, \Gamma\sigma_2})_{\Gamma\sigma_1, \Gamma\sigma_2 \in \Gamma \setminus G}\subseteq \mathcal M $
such that
 \begin {enumerate}
 %normalized by the condition $$
 \item[(a)] $v_{\Gamma\sigma, \Gamma\sigma} = \chi_{\overline{\Gamma\sigma}} \in L^{\infty}(\G, \mu),\quad \sigma \in G,$
 %$ that is
 % $G^{\rm op}$ - equivariant
 %with respect to the representation $\beta$
%into the automorphism group of $\mathcal  M$
%introduced in definition \ref{defbeta}. The last property of the matrix unit  corresponds to the fact
%that for all $\Gamma\sigma_1, \Gamma\sigma_2$ cosets of $\Gamma$ in $ G$,  we have that  $
\item[(b)] $\beta_g(v_{\Gamma\sigma_1, \Gamma\sigma_2}) = v_{\Gamma\sigma_1 g, \Gamma\sigma_2 g},\quad g \in G^{\rm op}, \Gamma\sigma_1, \Gamma\sigma_2\in \Gamma\backslash G .$
\end{enumerate}
\end{thm}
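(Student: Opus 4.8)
The plan is to prove Theorem \ref{split} by establishing a cycle of implications $(i)\Rightarrow(iv)\Rightarrow(iii)\Rightarrow(ii)\Rightarrow(i)$, with the content concentrated in translating between the representation-theoretic data and the operator-algebraic data living inside the crossed products $\cM$ and $P$. The conceptual engine behind all four equivalences is the identification, valid when $D_\pi=1$, of the $\Gamma$-invariant vectors of $\pi\otimes\overline\pi$ with the type II$_1$ factor $\pi(\Gamma)'$, together with the tower of commutants $\A_\infty \cong P$ from \eqref{ainfty}--\eqref{asigma}; everything reduces to understanding how $\mathrm{Ad}\,\pi$ intertwines the various finite-index corners $\mathcal A_{\sigma\Gamma}$.

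For $(i)\Rightarrow(iv)$: given $\pi$, form $\pi\otimes\overline\pi$ and inside $\cM = \mathcal L(G\rtimes L^\infty(\G,\mu))$ produce the matrix unit by setting $v_{\Gamma\sigma_1,\Gamma\sigma_2}$ to be the partial isometry implementing ${\rm Ad}\,\pi$ between the corners $\mathcal A_{\sigma_1\Gamma}$ and $\mathcal A_{\sigma_2\Gamma}$, normalized so the diagonal entries are the characteristic functions $\chi_{\overline{\Gamma\sigma}}$ in $L^\infty(\G,\mu)$; property (a) holds by construction and (b) follows because $\beta_g$ is exactly the action by right translation on $\G$, matching the right $G^{\rm op}$-translation structure on $\Gamma\backslash G$. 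For $(iv)\Rightarrow(iii)$: from the matrix unit, the vectors indexed by a fixed double coset $\Gamma\sigma\Gamma$ assemble — after cutting down by $\chi_K$ and unraveling the $G\times G^{\rm op}$ structure — into the self-adjoint element $X^{\Gamma\sigma\Gamma}\in \mathcal L(G)\cap \ell^2(\Gamma\sigma\Gamma)$, and the braiding relations of the matrix unit become precisely the statement that $u(\sigma)=\chi_K(X^{\Gamma\sigma\Gamma}\otimes\sigma^{-1})\chi_K$ is a unitary representation of $G^{\rm op}$; the key computation is that $u(\sigma)u(\tau)=u(\sigma\tau)$ reduces to a convolution identity for the $X$'s over double cosets, which is where the operator system relations $\C(\Gamma\backslash G/\Gamma)$ enter. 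For $(iii)\Rightarrow(ii)$ one defines $t$ on $\S\O = \C(G/\Gamma)\otimes_{\C(\Gamma\backslash G/\Gamma)}\C(\Gamma\backslash G)$ by pairing the $X^{\Gamma\sigma\Gamma}$ against the coset data, checking that identities (\ref{id1})--(\ref{id3}) are the abstract shadow of the relations satisfied by $u$; and $(ii)\Rightarrow(i)$ recovers $\pi$ by a GNS-type construction from the representation $t$ of the operator system, letting $G$ act on the completion of $\mathcal L(G)$ (equivalently $\ell^2(\Gamma)$) via the operators $t$ produces, and verifying $\pi|_\Gamma$ is the regular representation because the diagonal of $\S\O$ restricted to $\Gamma$ is trivial.

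The main obstacle I expect is the step $(iv)\Rightarrow(iii)$, or more precisely isolating the correct self-adjoint elements $X^{\Gamma\sigma\Gamma}$ and proving they lie in $\ell^2(\Gamma\sigma\Gamma)$ with the right square-summability and that the product $u(\sigma)u(\tau)$ closes up. The difficulty is bookkeeping: the matrix unit is indexed by single cosets $\Gamma\backslash G$, but the representation $u$ is organized by double cosets, so one must show that the $\beta$-equivariance condition (b) forces the entries $v_{\Gamma\sigma_1,\Gamma\sigma_2}$ with $\sigma_1^{-1}\sigma_2$ in a fixed double coset to be governed by a single element of $\mathcal L(G)$, and that the compression by $\chi_K$ — which geometrically restricts from $\G$ to $K$ — does not lose information. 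This amounts to a careful analysis of the Hecke-type combinatorics of $\Gamma\backslash G/\Gamma$ and the compatibility of the profinite completions $K\subseteq\G$; the $\ell^2$ estimate on $X^{\Gamma\sigma\Gamma}$ should follow from $D_\pi=1$ (finiteness of the index $[\Gamma:\Gamma_\sigma]$) once the element is correctly identified. The remaining implications are, by comparison, largely formal manipulations with crossed products and the anti-isomorphism $\A_\infty\cong P$ already set up in the excerpt.
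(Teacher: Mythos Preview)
Your cycle $(i)\Rightarrow(iv)\Rightarrow(iii)\Rightarrow(ii)\Rightarrow(i)$ differs from the paper's, which runs $(i)\Rightarrow(iii)\Rightarrow(iv)\Rightarrow(ii)\Rightarrow(i)$ (with $(i)\Leftrightarrow(ii)$ also imported from \cite{Ra1}). This is not just a cosmetic reordering: the content is distributed differently, and the step you flag as the main obstacle is not the one that actually carries the weight.

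Your step $(i)\Rightarrow(iv)$ has a genuine gap. You propose to set $v_{\Gamma\sigma_1,\Gamma\sigma_2}$ to be ``the partial isometry implementing $\mathrm{Ad}\,\pi$ between the corners $\A_{\sigma_1\Gamma}$ and $\A_{\sigma_2\Gamma}$''. But the $\A_{\sigma\Gamma}$ of \eqref{asigma} are \emph{subalgebras} of $P=\chi_K\cM\chi_K$, not corners of $\cM$; the matrix unit must instead live in $\cM$ and connect the projections $\chi_{\overline{\Gamma\sigma_1}}$, $\chi_{\overline{\Gamma\sigma_2}}\in L^\infty(\G,\mu)$. An automorphism between subalgebras of $P$ does not by itself hand you a partial isometry in $\cM$, and nothing in your outline produces an element of $\cM=\L(G\rtimes L^\infty(\G,\mu))$ from $\pi$. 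The paper bridges this gap by the intertwiner/bimodule machinery of Section~\ref{algebraization}: the unitary $\pi(\sigma)\in\Int_{\theta_\sigma}(\pi(\Gamma_{\sigma^{-1}}),\pi(\Gamma_\sigma))$ is transported via the map $\Phi_{\theta_\sigma}$ of Lemma~\ref{oldbg1} into the bimodule $\L(\Gamma)W_{\theta_\sigma}\L(\Gamma)$, and then the explicit Pimsner--Popa formula \eqref{pp} yields $u(\sigma)=\chi_K(t^{\Gamma\sigma\Gamma}\otimes\sigma^{-1})\chi_K$ inside $\B_\infty$. Equivalently, one first passes through the matrix-coefficient construction $t^A=\sum_{\theta\in A}\overline{\langle\pi(\theta)1,1\rangle}\theta$ of \eqref{t} and then sets $v_{\Gamma\sigma_1,\Gamma\sigma_2}=\chi_{\Gamma\sigma_1}t^{\Gamma\sigma_1\sigma_2^{-1}\Gamma}\chi_{\Gamma\sigma_2}$. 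Either route is the substantive construction, and your proposal skips it.

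Conversely, the step you single out as hardest, $(iv)\Rightarrow(iii)$, is in the paper essentially a one-line consequence of ergodicity. Defining $X^{\Gamma\sigma\Gamma}=\sum_{\Gamma\sigma_1\sigma_2^{-1}\Gamma=\Gamma\sigma\Gamma}v_{\Gamma\sigma_1,\Gamma\sigma_2}$ as in \eqref{defx}, the $\gop$-equivariance (b) forces $\beta_g(X^{\Gamma\sigma\Gamma})=X^{\Gamma\sigma\Gamma}$, and since $G^{\rm op}$ acts ergodically on $\G$ one has $\cM^G=\L(G)$, so $X^{\Gamma\sigma\Gamma}\in\L(G)$ automatically; the support condition $\chi_{\Gamma\sigma_1}\theta\chi_{\Gamma\sigma_2}\neq0\Leftrightarrow\theta\in\Gamma\sigma_1\sigma_2^{-1}\Gamma$ then gives $X^{\Gamma\sigma\Gamma}\in\ell^2(\Gamma\sigma\Gamma)$. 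No delicate $\ell^2$ estimate or compression argument is needed. The passage from the $X$'s to $u$ is then just reading off formula \eqref{uform}, and the paper in fact runs this as $(iv)\Rightarrow(ii)$ directly, extracting the full operator-system representation $t$. So your assessment of where the difficulty lies is inverted: the hard direction is getting from $\pi$ into $\cM$, not moving among the equivalent descriptions $(ii)$, $(iii)$, $(iv)$ once you are there.
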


Part (iii) in the above theorem is an abstract  C$^\ast$-algebraic point of view for the representation $\pi$.
A unitary representation as in part (iii) may be constructed directly also in the case $D_\pi\ne 1$ (see Theorem \ref{abstractsetting}, Section \ref{appendix}).

The homomorphism $\alpha=\alpha_\pi$ acts on  the  corner $\chi_K \mathcal M\chi_K$ of the algebra
$\mathcal M$ introduced in formula \eqref{defM}.

We denote by  $\rho_{\Gamma\backslash G} $  the right quasi- regular representation of $G$ into the unitary group associated with the Hilbert space $\ell^2(\Gamma\backslash G)$. Then $\operatorname{Ad} \rho_{\Gamma\backslash G}$ is a homomorphism from $G$ into the inner automorphism group of $B(\ell^2(\Gamma\backslash G))$.

The following corollary will  be proved in Section \ref{thesplitting11}.
 \begin{cor}\label{defalpha}
We assume that the equivalent conditions (i)-(iv) in the statement of Theorem \ref{split} hold true.
Recall that  $\chi_K\mathcal{M}\chi_K$ is isomorphic to  $P$.
  The $\gop$-equivariant matrix unit constructed in part (iv) of the preceding statement, yields  a   homomorphism
$$\gop \ni g \rightarrow \alpha_g \in \Aut(\chi_K\mathcal{M}\chi_K). $$

Then:

\item {(i)} The algebra ${\mathcal M}$ splits as
%We have the tensor product decomposition
% $G^{\rm op}$-equivariantly,
   \begin{equation}\label{thesplitting}
\mathcal{M}\cong \chi_K \mathcal{M} \chi_{K} \otimes B(l^2(\Gamma \setminus G)).
\end{equation}

\item {(ii)}
The homomorphism $(\beta_g)_{ g\in G^{\rm op}}$ into the automorphism group of  $\mathcal M$, splits in the tensor product form
 \begin{equation}\label{thesplitting1}
 \alpha_g \otimes {\rm Ad\, }\rho_{\Gamma \setminus G}(g), \quad g \in G^{\rm op}.
 \end{equation}
\end{cor}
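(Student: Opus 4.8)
The plan is to leverage the $\gop$-equivariant matrix unit $(v_{\Gamma\sigma_1,\Gamma\sigma_2})$ from Theorem \ref{split}(iv) as the bridge between the abstract crossed product $\cM$ and the tensor decomposition. First I would observe that, by condition (a), the diagonal projections $v_{\Gamma\sigma,\Gamma\sigma}=\chi_{\overline{\Gamma\sigma}}$ are mutually orthogonal (the cosets $\Gamma\sigma$ partition $G$, hence their closures partition $\G$ up to null sets) and sum to $1$ in $L^\infty(\G,\mu)\subseteq\cM$; combined with (b) and the fact that $\gop$ acts transitively on $\Gamma\backslash G$, a standard argument shows that the $v_{\Gamma\sigma_1,\Gamma\sigma_2}$ generate a copy of $B(\ell^2(\Gamma\backslash G))$ inside $\cM$. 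The relative commutant of this matrix unit, which is a corner cut by any one diagonal projection, say $\chi_K=v_{\Gamma,\Gamma}$, is then $\chi_K\cM\chi_K$, and the usual reconstruction $\cM\cong \chi_K\cM\chi_K\otimes B(\ell^2(\Gamma\backslash G))$ follows by writing $x=\sum v_{\Gamma\sigma_1,\Gamma}\,(\chi_K x\chi_K\text{-matrix entries})\,v_{\Gamma,\Gamma\sigma_2}$. This settles part (i), modulo checking that $\chi_K\cM\chi_K$ is indeed isomorphic to $P$, which is asserted in the statement and is essentially the identification $P=\A_\infty=\L(\Gamma\rtimes L^\infty(K,\mu))$ together with the fact that $\chi_K\cM\chi_K$ is the crossed product of $L^\infty(\G,\mu)$ restricted to $K$ by the stabilizer subgroup $\Gamma$.

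For part (ii), the key point is that $\beta_g$ permutes the diagonal projections exactly as the right quasi-regular representation permutes the basis of $\ell^2(\Gamma\backslash G)$: by (b), $\beta_g(v_{\Gamma\sigma,\Gamma\sigma})=v_{\Gamma\sigma g,\Gamma\sigma g}$, which is precisely $\mathrm{Ad}\,\rho_{\Gamma\backslash G}(g)$ acting on the matrix-unit factor. Under the isomorphism of part (i), an arbitrary element of $\cM$ has the form $\sum_{\sigma_1,\sigma_2} v_{\Gamma\sigma_1,\Gamma}\, a_{\sigma_1,\sigma_2}\, v_{\Gamma,\Gamma\sigma_2}$ with $a_{\sigma_1,\sigma_2}\in\chi_K\cM\chi_K$, and applying $\beta_g$ and using equivariance of the $v$'s reduces the computation to understanding $\beta_g$ on the corner $\chi_K\cM\chi_K$. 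I would define $\alpha_g$ to be exactly $v_{\Gamma,\Gamma g}\,\beta_g(\,\cdot\,)\,v_{\Gamma g,\Gamma}$ on $\chi_K\cM\chi_K$ — this is an automorphism of $\chi_K\cM\chi_K$ because $\beta_g$ carries $\chi_K$ to $v_{\Gamma g,\Gamma g}$ and $v_{\Gamma,\Gamma g}$ intertwines back — and then verify the cocycle identity $\alpha_g\alpha_h=\alpha_{gh}$ using the composition rule $v_{\Gamma,\Gamma g}\,\beta_g(v_{\Gamma,\Gamma h})\,v_{\Gamma g,\Gamma}=v_{\Gamma,\Gamma gh}$, which follows from (b) applied to $v_{\Gamma,\Gamma h}$. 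Feeding this back, $\beta_g$ on the whole of $\cM$ becomes $\alpha_g\otimes\mathrm{Ad}\,\rho_{\Gamma\backslash G}(g)$, which is formula (\ref{thesplitting1}).

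I would also need to check that this $\alpha$ coincides with the homomorphism $\alpha_\pi$ constructed earlier via formula (\ref{alpha11}) — i.e. that restricting the corner action to the subalgebras $\mathcal A_{\sigma\Gamma}$ reproduces $\mathrm{Ad}\,\pi(\sigma)$ on the commutants $\pi(\Gamma_\sigma)'$. This compatibility should be traceable through the anti-isomorphism $\A_\infty\cong\cR(\Gamma\rtimes L^\infty(K,\mu))$ and the identification of $e_{\Gamma_\sigma}$ with $\chi_{K_\sigma}$, using that $v_{\Gamma,\Gamma\sigma}$ implements precisely the isomorphism $\theta_\sigma$ of Definition \ref{outing}. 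The main obstacle, I expect, is the bookkeeping in verifying that $\alpha_g$ is well defined independently of the (infinitely many) choices implicit in the matrix unit and the Pimsner–Popa bases, and that the cocycle identity holds on the nose rather than up to an inner perturbation; this is where one genuinely uses that the matrix unit is $\gop$-equivariant in the strong sense of (b), not merely that such a matrix unit exists. The measure-theoretic subtlety that $\G$ (hence $\cM$) carries an infinite measure while $K$ (hence $P$) carries a probability measure also has to be handled with care: the projection $\chi_K$ has infinite "trace" relative to the weight on $\cM$, so the isomorphism $\chi_K\cM\chi_K\cong P$ is an isomorphism of von Neumann algebras compatible with the traces only after the appropriate normalization, and the split (\ref{thesplitting}) is as von Neumann algebras with $B(\ell^2(\Gamma\backslash G))$ carrying its standard semifinite trace.
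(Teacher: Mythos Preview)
Your proposal is correct and follows essentially the same approach as the paper: the paper's proof is extremely terse, simply declaring the splitting ``straightforward'' once a $\gop$-equivariant matrix unit is in hand and then writing down the key formula $\alpha_g(p)=v_{\Gamma,\Gamma g}\,\beta_g(p)\,v_{\Gamma g,\Gamma}$, which is exactly the formula you propose. Your writeup supplies the details the paper omits (the matrix-unit reconstruction of $\cM$, the verification that $\alpha$ is a genuine homomorphism via $v_{\Gamma,\Gamma g}\,\beta_g(v_{\Gamma,\Gamma h})=v_{\Gamma,\Gamma gh}$, and the passage to the tensor-product form of $\beta_g$), and the compatibility with the earlier $\alpha_\pi$ that you flag is handled by the paper separately in Remark \ref{uback}(iii) rather than in this proof.

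One small slip in your final paragraph: $\chi_K$ has trace $\mu(K)=1$ in the semifinite trace on $\cM$, not infinite trace; the infiniteness of $\cM$ comes from the infinitely many orthogonal translates $\chi_{K\sigma}$, not from $\chi_K$ itself, and the isomorphism $\chi_K\cM\chi_K\cong P$ is trace-preserving without any renormalization.
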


It may be easily observed that both homomorphisms $\alpha$ and $\beta$ extend to a representation of $\G$.

The following result, which will be proved in Section \ref{reltohecke}, summarizes the relation of the above construction with the Hecke operators acting on the Hilbert space
of $\Gamma$-invariant vectors
for the representation $\pi \otimes \overline{\pi}\cong $ Ad $\pi$.
Consider the canonical von Neumann conditional expectation (see e.g. \cite{Sa})
$$E_{\mathcal L(\Gamma)}^{\lkm}(\ \cdot\ ),$$
defined  on $P=\lkm$, with values onto $\L(\Gamma)$.

  \begin{thm}\label{heckes}
  %\label{heckes}
Assume that   the four equivalent properties in the above  theorem  hold true.
Consider  the $\ast$-algebra representation for the Hecke algebra $\H_0$ constructed in \cite{Ra1}, acting on the Hilbert space of $\Gamma$-invariant vectors for the unitary representation $\pi \otimes \overline {\pi}\cong {\rm Ad}(\pi)$.

Then the above $\ast$-algebra representation of the algebra $\H_0$   is unitarily equivalent to the representation of the Hecke algebra obtained  by  using  the completely positive maps $\Psi_{\Gamma\sigma\Gamma}$ on $\mathcal L(\Gamma)$, defined by the formula
  \begin{equation}\label{formhecke}
  %\label{formhecke}
\Psi_{\Gamma\sigma\Gamma}(x)=E_{\mathcal L(\Gamma)}^{\lkm}(\alpha_g(x)), \quad g \in \Gamma \sigma\Gamma,
x\in \L(\Gamma).
 \end{equation}

\end{thm}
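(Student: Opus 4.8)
The plan is to write out the two $\ast$-algebra representations of $\H_0$ explicitly and to match them one double coset at a time through the identifications already established in the body of the paper; once the operator attached to a fixed $\Gamma\sigma\Gamma$ is seen to be the same on both sides, the unitary equivalence follows at the level of the underlying $L^2$-spaces.

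First I would recall the concrete form of the representation of \cite{Ra1}. Since $D_\pi=1$, the Hilbert space of $\Gamma$-invariant vectors for $\pi\otimes\overline\pi\cong\operatorname{Ad}\pi$ is $L^2(\pi(\Gamma)',\tau)$, the standard form of the type II$_1$ factor $\pi(\Gamma)'$ with its trace (Petersson) scalar product. For a double coset $\Gamma\sigma\Gamma=\bigsqcup_j\tau_j\Gamma$ with $\tau_j=\gamma_j\sigma$ and $\{\gamma_j\}$ a transversal of $\Gamma/\Gamma_\sigma$, the corresponding Hecke operator is the $L^2$-continuous extension of the unital, trace-preserving completely positive map $\Phi_{\Gamma\sigma\Gamma}$ on $\pi(\Gamma)'$ given by $x\mapsto\frac{1}{[\Gamma:\Gamma_\sigma]}\sum_j\pi(\tau_j)\,x\,\pi(\tau_j)^{\ast}$. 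The $\Gamma$-invariance of $x$ forces this sum into $\pi(\Gamma)'$, and writing $\pi(\tau_j)=\pi(\gamma_j)\pi(\sigma)$ one recognizes
\[
\Phi_{\Gamma\sigma\Gamma}(x)=\frac{1}{[\Gamma:\Gamma_\sigma]}\sum_j\operatorname{Ad}\pi(\gamma_j)\bigl(\operatorname{Ad}\pi(\sigma)(x)\bigr)=E^{\pi(\Gamma_\sigma)'}_{\pi(\Gamma)'}\bigl(\operatorname{Ad}\pi(\sigma)(x)\bigr),
\]
because $\operatorname{Ad}\pi(\sigma)$ carries $\pi(\Gamma)'\subseteq\pi(\Gamma_{\sigma^{-1}})'$ into $\pi(\Gamma_\sigma)'$, and averaging the inner automorphisms $\operatorname{Ad}\pi(\gamma_j)$ over a transversal of $\Gamma/\Gamma_\sigma$ is exactly the unique trace-preserving conditional expectation of $\pi(\Gamma_\sigma)'$ onto $\pi(\Gamma)'$ (here one uses $[\pi(\Gamma_\sigma)':\pi(\Gamma)']=[\Gamma:\Gamma_\sigma]$).

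Next I would transport this through the identifications of Sections \ref{thesplitting11} and \ref{proof1to3}, as recorded in Theorem \ref{liftout} and Corollary \ref{defalpha}. Because $D_\pi=1$, the canonical anti-isomorphism carries $\pi(\Gamma)'$ onto $\mathcal L(\Gamma)=\mathcal A_{e\Gamma}\subseteq P$ and, more generally, $\pi(\Gamma_\sigma)'$ onto $\mathcal A_{\sigma\Gamma}$ of (\ref{asigma}), identifying the inductive tower (\ref{ainfty}) with $P=\lkm$ of (\ref{p}). Under this identification, by (\ref{alpha11}) and Theorem \ref{liftout}(i) the automorphism $\operatorname{Ad}\pi(\sigma)$ of $\pi(\Gamma_{\sigma^{-1}})'$ goes to $\alpha_\pi(\sigma)$ restricted to $\mathcal A_{\sigma^{-1}\Gamma}$, and by uniqueness of the trace-preserving conditional expectation together with the compatibility of the expectations along the Jones tower, $E^{\pi(\Gamma_\sigma)'}_{\pi(\Gamma)'}$ goes to $E^{\mathcal A_{\sigma\Gamma}}_{\mathcal L(\Gamma)}$, which is the restriction to $\mathcal A_{\sigma\Gamma}$ of the single expectation $E^{\lkm}_{\mathcal L(\Gamma)}$ of (\ref{formhecke}). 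Composing the three, $\Phi_{\Gamma\sigma\Gamma}$ is carried exactly to the map $x\mapsto E^{\lkm}_{\mathcal L(\Gamma)}(\alpha_\pi(\sigma)(x))=\Psi_{\Gamma\sigma\Gamma}(x)$; independence of the chosen $g\in\Gamma\sigma\Gamma$ is automatic, since $\Phi_{\Gamma\sigma\Gamma}$ manifestly depends only on the double coset. Finally, the canonical anti-isomorphism $\pi(\Gamma)'\to\mathcal L(\Gamma)$ preserves the normalized trace, hence extends to a canonical unitary $W\colon L^2(\pi(\Gamma)',\tau)\to L^2(\mathcal L(\Gamma),\tau)$; by what precedes, $W$ intertwines the $L^2$-extension of $\Phi_{\Gamma\sigma\Gamma}$ with that of $\Psi_{\Gamma\sigma\Gamma}$ for every double coset, and since both families are realized by operators inside one and the same tower of algebras, the convolution relations of $\H_0$ and its involution are transported with them, so $W$ implements the asserted unitary equivalence of the two $\ast$-algebra representations of $\H_0$.

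The only substantive step is the transport above, and the main obstacle there is to keep the two anti-isomorphisms consistently composed and, above all, to check that the passage from $\operatorname{Ad}\pi(\sigma)$ to the homomorphic lift $\alpha_\pi(\sigma)$ provided by Theorem \ref{liftout} — which a priori is pinned down only modulo $\operatorname{Int}(P)$ — is compatible with $E^{\lkm}_{\mathcal L(\Gamma)}$, in the sense that $E^{\lkm}_{\mathcal L(\Gamma)}\circ\alpha_\pi(\sigma)$ equals the transported $E^{\pi(\Gamma_\sigma)'}_{\pi(\Gamma)'}\circ\operatorname{Ad}\pi(\sigma)$ \emph{on the nose}, not merely up to an inner perturbation. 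This is precisely where the rigidity built into the construction of the $\gop$-equivariant matrix unit of Theorem \ref{split}(iv) and Corollary \ref{defalpha} is used: the residual inner ambiguity in $\alpha_\pi(\sigma)$ is implemented by a unitary lying in the normalizer of $\mathcal L(\Gamma)$ inside $\mathcal A_{\sigma\Gamma}$, and is therefore annihilated by the $\mathcal L(\Gamma)$-valued expectation. With this compatibility in hand, the remaining steps are routine.
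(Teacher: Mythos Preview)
Your argument is correct and reaches the same conclusion, but by a different route than the paper. The paper proceeds computationally: it invokes the explicit description of $\alpha_\sigma$ via the equivariant matrix unit (formula~(\ref{alpha}) and Proposition~\ref{doublecross}), writes $\alpha_\sigma(x)=\chi_{\overline{\Gamma}}t^{\Gamma\sigma\Gamma}\chi_{\Gamma\sigma}x\chi_{\Gamma\sigma}t^{\Gamma\sigma\Gamma}\chi_{\overline{\Gamma}}$, and then computes $E_{\mathcal L(\Gamma)}^{\lkm}(\alpha_\sigma(x))$ directly as a sum over a Pimsner--Popa basis, obtaining the closed expression $\chi_{\overline{\Gamma}}t^{\Gamma\sigma\Gamma}x\,t^{\Gamma\sigma\Gamma}\chi_{\overline{\Gamma}}$, which is then recognised as the completely positive map of \cite{Ra1}. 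Your approach is structural: you identify the \cite{Ra1} Hecke operator abstractly as $E^{\pi(\Gamma_\sigma)'}_{\pi(\Gamma)'}\circ\operatorname{Ad}\pi(\sigma)$ and transport this through the anti-isomorphism of the tower $(\pi(\Gamma_0)')_{\Gamma_0\in\S}$ with the tower $(\mathcal A_{\sigma\Gamma})_{\sigma\in G}\subseteq P$, invoking uniqueness of trace-preserving conditional expectations. The paper's calculation buys the explicit formula in terms of $t^{\Gamma\sigma\Gamma}$, which is reused elsewhere; your argument is cleaner but does not produce that formula.

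One comment: your final paragraph worries about an inner ambiguity in $\alpha_\pi(\sigma)$ that is not actually present. The homomorphism $\alpha_\pi$ is defined on the nose by formula~(\ref{alpha11}) (equivalently by Corollary~\ref{defalpha}), not merely up to $\operatorname{Int}(P)$; the passage through $\operatorname{Out}(P)$ in Theorem~\ref{liftout} is a classification device, not part of the definition of $\alpha_\pi$. So the compatibility $E^{\lkm}_{\mathcal L(\Gamma)}\circ\alpha_\pi(\sigma)|_{\mathcal L(\Gamma)}=E^{\mathcal A_{\sigma\Gamma}}_{\mathcal L(\Gamma)}\circ\alpha_\pi(\sigma)|_{\mathcal L(\Gamma)}$ holds simply because $\alpha_\pi(\sigma)(\mathcal L(\Gamma))\subseteq\alpha_\pi(\sigma)(\mathcal A_{\sigma^{-1}\Gamma})=\mathcal A_{\sigma\Gamma}$ by Theorem~\ref{liftout}(i)(a), and no normaliser argument is needed.
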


In particular, the statement of the Ramanujan-Petersson conjectures for the Hecke algebra representation into the Hilbert space of $\Gamma$-invariant vectors for the representation
$\pi \otimes \overline{\pi}\cong $ Ad $\pi$, is equivalent to the weak containment of the unitary representation of $G$ induced by
$\alpha$ on $L^2(P)\ominus \mathbb C1$, into the restriction to $G$ of the left regular representation of the Schlichting completion $\G$ of $G$ (see e. g. \cite{Lu}).

  Assume that
  %Recall (\cite{Ra1})
  $\pi$ belongs to the discrete series of PSL$(2,\mathbb R)$,  We consider the restriction  $\pi|_{{\rm PGL}(2,\mathbb Z[1/p])}$, where $p$ a prime number.  Then, by \cite{Ra1}, the     Hecke operators associated to this representation, defined as above, are unitarily equivalent to the Hecke operators acting on Maass forms.

  It is also possible to obtain a "coordinate free" version of the above statement.  In this formulation we substitute the homomorphism $\alpha$ by a canonical homomorphism. The drawback is that the type II$_1$ factor on which it acts is defined only up to an isomorphism.

  Recall that $\mathcal M$ is the reduced  crossed product von Neumann algebra $\L(G\rtimes L^\infty (\G,\mu))$, with semifinite trace $T$ induced by the $G$-invariant measure $\mu$.
Recall that  the homomorphism $\beta:G \rightarrow {\rm Aut}(\mathcal M)$, where
 each automorphism $\beta_g$ acts by right translation by $g$ on
 $L^\infty (\G, \mu)$, and acts identically on $\mathcal L(G)$, for $g\in G$.

 We also consider the algebra $D\subseteq L^\infty (\G, \mu)\subseteq  \mathcal M$ consisting of left $K$-invariant functions in $L^\infty (\G, \mu)$. Obviously $D\cong \ell^\infty(\Gamma\backslash G)$.
We have (see Section \ref{reltohecke} for the proof):

  \begin{cor}\label{faracoordinate}
Assume that the equivalent conditions    introduced above hold true. Then:
\item{(i)}
There exist a unitary representation
$\theta: G\rightarrow \U(\mathcal M)$ with the following properties:

 (a) The representation $\theta$ is unitarily equivalent to the right quasi-regular representation $\rho_{\Gamma\backslash G}$ of $G$.

 (b) For all $g,h\in G$ we have   ${\rm Ad}(\theta_g)|_D= \beta_g|_D$ and $\beta_g(\theta_h)=\theta (ghg^{-1}).$

 \item{(ii)} Let $\Theta \subseteq \mathcal M$ be the type $I_\infty$ factor generated by the image of the representation $\theta$.
 Let $\A=\Theta'\subseteq D'$. Obviously $\L(\Gamma)\subseteq \A$. Then $\A$   is a type II$_1$ factor isomorphic to $\L(\Gamma \rtimes K)$ with unique trace $\tau$ induced by $\mu$.
   \item{(iii)} The homomorphism $(\beta_g)_{g\in G}$ invariates $\Theta$ and hence it invariates $\A$.
 The homomorphism $ (\beta_g|_\A)_{g\in G}$ of $G^{\operatorname{op}}$ into ${\rm Aut} (\A)$ is unitarily equivalent to the homomorphism $\alpha$ introduced in the above corollary (Corollary \ref{defalpha}).
 It extends obviously to $\G$.

  In particular, the Hecke algebra representation introduced in the previous  theorem  is unitarily equivalent to the representation of the Hecke algebra obtained by using the  completely positive maps defined by the formula
 $$\Psi^0_{[\Gamma\sigma \Gamma]}(x)=E^{\A}_{\L(\Gamma)}(\beta_\sigma(x)),\quad x \in \A, \sigma\in G. $$

 \item {(iv)} The classification  of the unitary representations $\theta$ as in (i) is  determined by  the cohomology group $H^1_{\beta|_\A}(G,\U(\A))$.
\end{cor}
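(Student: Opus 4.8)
The plan is to obtain Corollary~\ref{faracoordinate} as a ``coordinate-free'' repackaging of Corollary~\ref{defalpha}, replacing the explicit $\gop$-equivariant matrix unit of Theorem~\ref{split}(iv) by the intrinsic unitary representation it determines. Concretely, given the matrix unit $(v_{\Gamma\sigma_1,\Gamma\sigma_2})_{\Gamma\sigma_1,\Gamma\sigma_2\in\Gamma\backslash G}\subseteq\mathcal M$ of Theorem~\ref{split}(iv), I would set
\[
\theta_g=\sum_{\Gamma\sigma\in\Gamma\backslash G}v_{\Gamma\sigma g^{-1},\,\Gamma\sigma}\in\U(\mathcal M),\qquad g\in G,
\]
the ``permutation'' operator implementing the bijection $\Gamma\sigma\mapsto\Gamma\sigma g^{-1}$ of $\Gamma\backslash G$ (the sign in the exponent is fixed by the convention for $\rho_{\Gamma\backslash G}$). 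The matrix-unit relations give at once that $\theta$ is a unitary representation of $G$ ($\theta_g\theta_h=\theta_{gh}$, $\theta_g^{*}=\theta_{g^{-1}}$); since the $v$'s are indexed by $\Gamma\backslash G$ and $\theta$ acts on the $\theta$-invariant family $\{v_{\Gamma\sigma,\Gamma e}\}$ by $v_{\Gamma\sigma,\Gamma e}\mapsto v_{\Gamma\sigma g^{-1},\Gamma e}$, restricting $\theta$ to the closed span of this family exhibits it as unitarily equivalent to $\rho_{\Gamma\backslash G}$, which is (i)(a). For (i)(b): as $v_{\Gamma\sigma,\Gamma\sigma}=\chi_{\overline{\Gamma\sigma}}$ generates $D$, one computes $\theta_g\,\chi_{\overline{\Gamma\sigma}}\,\theta_g^{*}=\chi_{\overline{\Gamma\sigma g^{-1}}}=\beta_g(\chi_{\overline{\Gamma\sigma}})$, so $\operatorname{Ad}\theta_g|_D=\beta_g|_D$; and applying $\beta_g$ termwise via property (b) of the matrix unit, $\beta_g(v_{\Gamma\sigma_1,\Gamma\sigma_2})=v_{\Gamma\sigma_1g,\Gamma\sigma_2g}$, and reindexing yields $\beta_g(\theta_h)=\theta(ghg^{-1})$ (modulo the routine $G$/$\gop$ bookkeeping).

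For (ii) I take $\Theta$ to be the von Neumann algebra generated by $\theta(G)$ together with $D$; the point is that this is then the full type $I_\infty$ factor $\{v_{\Gamma\sigma_1,\Gamma\sigma_2}\}''\cong B(\ell^2(\Gamma\backslash G))$, because every off-diagonal matrix unit is recovered as $v_{\Gamma\sigma_1,\Gamma\sigma_2}=\theta_g\,v_{\Gamma\sigma_2,\Gamma\sigma_2}$ for any $g\in\sigma_1^{-1}\Gamma\sigma_2$, while conversely $\theta(G)$ and $D$ lie in $\{v\}''$. In particular $D\subseteq\Theta$, so $\A:=\Theta'\cap\mathcal M\subseteq D'$. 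Recall $\chi_K=v_{\Gamma e,\Gamma e}$ is a minimal projection of $\Theta$; under the tensor splitting $\mathcal M\cong\chi_K\mathcal M\chi_K\otimes B(\ell^2(\Gamma\backslash G))$ of Corollary~\ref{defalpha}(i), which carries $\Theta$ onto $1\otimes B(\ell^2(\Gamma\backslash G))$, one gets $\A\cong\chi_K\mathcal M\chi_K\cong P=\lkm$, the type II$_1$ factor denoted $\L(\Gamma\rtimes K)$, with unique trace $\tau$ induced by $\mu$. Moreover $\L(\Gamma)\subseteq\A$: for $\gamma\in\Gamma\subseteq K$ the unitary $\lambda_\gamma$ acts trivially on the left-$K$-invariant algebra $D$ and, since $\L(\Gamma)\chi_K\subseteq\chi_K\mathcal M\chi_K$, commutes with the whole matrix unit, hence with $\Theta$.

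Part (iii): $\beta_g$ preserves $\theta(G)''$ by the covariance $\beta_g(\theta_h)=\theta(ghg^{-1})$ of (i)(b), and preserves $D$ because right translations on $\G$ preserve left-$K$-invariance of functions; so $\beta_g(\Theta)=\Theta$ and therefore $\beta_g(\A)=\A$. In the splitting above $\beta_g=\alpha_g\otimes\operatorname{Ad}\rho_{\Gamma\backslash G}(g)$ by Corollary~\ref{defalpha}(ii), so $\beta_g|_\A$ corresponds under $\A\cong\chi_K\mathcal M\chi_K$ exactly to $\alpha_g$; thus $(\beta_g|_\A)_g$ is unitarily equivalent to $\alpha$ and, like $\beta$, extends to $\G$. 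For the Hecke-operator claim: $\alpha_\pi|_\Gamma=\operatorname{Id}_P$ (Theorem~\ref{liftout}(i)(b)) gives $\beta_\gamma|_\A=\operatorname{Id}_\A$ for $\gamma\in\Gamma$, so $x\mapsto E^{\A}_{\L(\Gamma)}(\beta_\sigma(x))$ depends only on $\Gamma\sigma\Gamma$; transported through $\A\cong P$ it becomes the map $x\mapsto E^{\lkm}_{\L(\Gamma)}(\alpha_g(x))$, $g\in\Gamma\sigma\Gamma$, of Theorem~\ref{heckes}, hence is unitarily equivalent to the Hecke-algebra representation of \cite{Ra1}. Finally (iv): a representation $\theta$ as in (i) is precisely the datum of a tensor splitting of $\beta$ as $\beta|_\A\otimes\operatorname{Ad}\rho_{\Gamma\backslash G}$, and by the equivalences of Theorem~\ref{split} (together with the construction of $\theta$ from $\pi$ above) these are in bijection with the representations $\pi$ of Theorem~\ref{split}(i); so Theorem~\ref{liftout}(ii),(iii) and the identification $(\A,\beta|_\A)\cong(P,\alpha)$ from part (iii) give the classification by $H^1_{\beta|_\A}(G,\U(\A))=H^1_{\alpha}(G,\U(P))$. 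Equivalently one argues directly that for two such $\theta,\theta'$ the ratio $c_g=\theta'_g\theta_g^{*}$ lies in $D'\cap\mathcal M=\A\,\overline{\otimes}\,D$, that the relations $\theta'_{gh}=\theta'_g\theta'_h$ and $\beta_g(\theta'_h)=\theta'(ghg^{-1})$ force the $\Gamma e$-component of $c$ (after the evident $\alpha$-twist) to be an $\U(\A)$-valued $1$-cocycle for $\beta|_\A$ and determine the remaining components from it, and that $\theta,\theta'$ are conjugate exactly when this cocycle is a coboundary.

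The main (in fact essentially the only nonroutine) obstacle is the bookkeeping in (ii): one must read ``the type $I_\infty$ factor generated by the image of $\theta$'' as the factor $\{v\}''$ and verify $D\subseteq\Theta$, since $\theta(G)''$ alone is \emph{not} a factor — it is isomorphic to $\rho_{\Gamma\backslash G}(G)''$, whose relative commutant/center is the (nontrivial) Hecke algebra — so $D$ must be adjoined and the splitting of Corollary~\ref{defalpha} invoked to identify the complementary factor $\A$; once that is done, everything else is translation through the established equivalences. The only other points demanding care are the consistent handling of the $G$ versus $\gop$ and left/right conventions throughout, and, in the direct argument for (iv), the reduction of the $\A\,\overline{\otimes}\,D$-valued datum to a single $\U(\A)$-valued $1$-cocycle.
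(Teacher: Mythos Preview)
Your argument is correct and follows the same architecture as the paper's proof: the paper defers (i) and (ii) to Lemma~\ref{leftreg1} (where the same $\theta$ is built from the matrix unit, cf.\ formula~(\ref{deftheta}) and the covariance~(\ref{rotheta})), obtains (iii) from Lemma~\ref{launloc} and Theorem~\ref{heckes} exactly as you do, and your observation that $\Theta$ must be read as the factor generated by $\theta(G)$ \emph{and} $D$ --- since $\rho_{\Gamma\backslash G}(G)''$ has centre the Hecke algebra and is therefore not a factor --- is a genuine clarification of an imprecision in the statement.

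The one place where the paper argues differently is (iv). Rather than working with $c_g=\theta'_g\theta_g^{*}\in D'\cap\mathcal M=\A\,\overline{\otimes}\,D$ and extracting a component, the paper first observes that any two such $\theta,\theta^1$ generate type $I_\infty$ factors with the same diagonal $D$, hence are conjugate by a single unitary $w\in D'$: $\theta^1_g=w\theta_g w^{*}$. Writing the covariance condition (b) for $\theta^1$ gives $\beta_h(w)\theta_{hgh^{-1}}\beta_h(w)^{*}=w\theta_{hgh^{-1}}w^{*}$, so $c(h):=w^{*}\beta_h(w)$ lies in $\theta(G)'=\Theta'=\A$ directly, and is visibly a $\beta|_\A$-cocycle; triviality of $c$ is then equivalent to $w\A w^{*}=\A$. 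This bypasses the ``reduce the $\A\overline{\otimes}D$-valued datum to a single component'' step you flag as needing care, at the cost of invoking up front the conjugacy-by-$w\in D'$ of the two matrix units. Either route works; the paper's is a little shorter, while your first route via Theorem~\ref{liftout} has the advantage of making the identification $H^1_{\beta|_\A}(G,\U(\A))=H^1_{\alpha}(G,\U(P))$ explicit.
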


Thus,   the unitary representation
 of $G$ (extending to $\G$) determining the Hecke operators acting on the $\Gamma$-invariant vectors for the representation $\pi\otimes \overline\pi$, is an operator algebra object. The procedure to construct this operator algebra object is as follows. Consider the homomorphism $\beta:G^{\rm op}\rightarrow {\rm Aut}({\mathcal M})$ described above, acting by right translation on $\G$ and trivially on $\L(G)$.
There exists  a type I$_\infty$ factor $\Theta \subseteq {\mathcal M}$ such that $\beta_g|_\Theta$ is unitarily equivalent to the
homomorphism  ${\rm Ad}\rho_{\Gamma\backslash G}$ of $G$ into $B(\ell^2(\Gamma\backslash G))$, induced by the right quasi-regular representation and such that both $\beta_g$ and ${\rm Ad}\rho_{\Gamma\backslash G}$ act identically on the diagonal algebra $D=\ell^\infty (\Gamma\backslash G)$, and ${\rm Ad}\rho_{\Gamma\backslash G}(g^{-1})\circ \beta_g$ is a homomorphism
for every $g\in G$.

Then every $\beta_g$ invariates $\A=\Theta'$ and the homomorphism  $(\beta_g|_\A)_{g \in G}$ into
${\rm Aut}(\A)$ is conjugated to the homomorphism
$\alpha$ into Aut$(\mathcal L(\Gamma\rtimes K))$.

The  goal for Ramanujan-Petersson problem is to prove that the
unitary representation of $G$, induced by $\beta|_\A$ on
the orthogonal  complement of the scalars in $L^2(\A,\tau)$, is weakly contained in the restriction to $G$ of the left regular representation of $\G$. This property obviously holds true for the unrestricted, unitary representation induced by the homomorphism  $\beta$ on $L^2(\mathcal M,T)$. The reason why this property doesn't straightforwardly pass to a subalgebra is the fact that the two traces
defining the Hilbert spaces $L^2(\A,\tau)$ and $L^2(\mathcal M,T)$  are different: one is finite and the other one is semifinite.

 %The above construction of the  representation $\alpha$ is a lifting to Aut($P$) of the

%We will prove that the action $\alpha_g$ into the automorphism group of the type II$_1$ factor $\lkm$, that we are constructing in Theorem 1 (and which gives the factorization), coincides with the action described above,  induced by the inductive limit of the actions ${\rm Ad\ }\pi(\sigma)$, $\sigma \in G$.

\section{Outline of the paper}

To prove the classification statement, we prove first a stronger result, which identifies the group homomorphism $\alpha$, introduced in formula (\ref{alpha11}), with a tensor product factor in a
representation of the group $G$ into the automorphism group of a larger II$_\infty$ factor. This II$_\infty$ factor encodes the information on the ergodic action of the group $G$
on its Schlichting completion (\cite{Sch}, \cite{Tz}).

We  recall that $\G$, the
 Schlichting completion of $G$, is a locally compact, totally disconnected  group, obtained as the disjoint union of all double cosets $K\sigma K$, where $\sigma$ runs over the set of representatives of double cosets $\Gamma\sigma\Gamma$ of $\Gamma$ in $G$.
 Then $G$ is dense in $\G$ and $K$ is a maximal compact subgroup.

 By Jones's index theory,  the existence of a unitary representation $\pi$ as above, automatically implies  the equality of the indices
 $$[\Gamma:\Gamma_\sigma]=[\Gamma:\Gamma_{\sigma^{-1}}], \quad \sigma \in G.$$
The Haar measure $\mu$ on $K$ extends to the Haar measure on $\G$. This    measure  is also denoted by $\mu$. It  is normalized by the condition that $\mu(K)=1$.
The above equality of the indices implies that the measure  $\mu$ on $\G$ is $G$-bivariant on $\G$.

 Let $\mathcal{M}$ be the reduced crossed product von Neumann algebra defined in \eqref{defM}, with respect to the Haar measure on $\mathcal G$ (\cite{vN}).
 Let $G^{\rm op}$ be the group $G$ with opposite multiplication. Then $G^{\rm op}$  acts canonically on $\mathcal{M}$ as follows: $G^{\rm op}$ leaves
$\mathcal L(G)$ invariant and acts by left multiplication on the infinite measure space $\G$.

We establish a correspondence between unitary representations $\pi$ as above   and ${G^{\rm op}}$-equivariant splittings of the form
\begin{equation}\label{formspl}
 \mathcal M\cong P \otimes B(l^2(\Gamma \backslash G)),
\end{equation}
  of the crossed product von Neumann algebra $\mathcal{M}$.
In this identification the type II$_1$ factor $P$, introduced in formula (\ref{p}), is the corner algebra $\chi_K\mathcal M\chi_K$ of $\mathcal M$ with unit $\chi_K$, the characteristic function of $K$.

   The homomorphism $\alpha$ is then identified, in this setting, with the action, on the  tensor product   factor  component $\chi_K\mathcal M\chi_K$, in the representation from formula (\ref {formspl}), of  the canonical    embedding  of the  group ${G^{\rm op}}$ into the automorphism group of the algebra $ \mathcal M\cong P \otimes B(l^2(\Gamma \backslash G)) $.

The problem of classification, up to unitary equivalence, of the unitary representations $\pi$ of $G$ considered above,
is therefore  equivalent to the problem of the classification of the ${G^{\rm op}}$-equivariant splittings
$$\mathcal M={\mathcal L(G \rtimes L^{\infty}(\mathcal G,\mu))}\cong \mathcal L(\Gamma\rtimes K)\otimes B(l^2(\Gamma \backslash G)).$$

\noindent    In the above $G^{\rm op}$-invariant isomorphism, the  tensor product factor $B(l^2(\Gamma \backslash G))$ is  acted by  ${\rm Ad}\rho_{\; \Gamma/G}$.

This correspondence between unitary representations and ${G^{\rm op}}$-equivariant splittings is explicitly described below.
Given $\pi$, we prove  that there exists a homomorphism $\alpha_\pi
%=\alpha
=(\alpha_g)_{g\in G}$  of the group $ G^{\rm op}$ into the automorphism group of $P$ (unique up to cocycle perturbation), such that, $G^{\rm op}$-equivariantly, the formula (\ref{formspl}) holds true. The action of the group $ G^{\rm op}$ on the two sides of the splitting in formula
(\ref{formspl}) is described below.

The diagonal algebra
$$l^\infty( \Gamma \backslash G)\subseteq B(l^2(\Gamma \backslash G))$$
is   identified, using the characteristic functions
 $\chi_{Kg}, g\in G$ with the subalgebra of $L^{\infty}(K\backslash\mathcal G,\mu)$ consisting of left  $K$-invariant, bounded measurable functions on $\G$.
  Let $\rho_{\; \Gamma \backslash G}$ be the right quasi-regular representation of ${G^{\rm op}}$ on $l^2(\Gamma \backslash G)$.

On the left hand side of the  isomorphism in formula (\ref{formspl}), the group $G^{\rm op}$ acts trivially on $\mathcal L(G)$, and  acts by  right translations on $\mathcal G$.
On the right hand side  the action of ${G^{\rm op}}$ is defined by
$$g \to \alpha_g \otimes {\rm Ad}\rho_{\; \Gamma/G}(g),\quad  g \in G.$$

Consequently,  unitary representations $\pi$ of $G$ such that $\pi|_\Gamma$ is unitary equivalent to the left regular representation of $\Gamma$
are  in one-to-one correspondence, up to unitary equivalence, with factorizations of the action of $G^{\rm op}$ by right translations
on $\G$. The factorizations "live" in the crossed product von Neumann algebra ${\mathcal M}={\mathcal L(G \rtimes L^{\infty}(\mathcal G,\mu))}.$
In the case of a projective unitary representation, the left regular representation will be replaced by the skewed left regular representation (see  the  Ozawa's notes \cite{Ra3}),
and skewed crossed products (\cite{Bo}) will be used. In the the case $D_\pi\ne 1$, the outline of the above correspondence is sketched in Section \ref{appendix}.

The homomorphism $\alpha=(\alpha_g)_{g\in G} :G\rightarrow \operatorname{Aut}(P)$ has the property that the space of $\Gamma$-fixed vectors consists of the elements in the algebra $\mathcal L(\Gamma)$.
In fact, as it will be explained later in this section, $\alpha$ has an obvious extension to a homomorphism of $\G$.

The factorization  construction also proves that the Hecke operators,  induced by the homomorphism $\alpha$ on the space of $\Gamma$-invariant vectors,    are  the same as the Hecke operators appearing   in the Hecke  algebra representation introduced in \cite{Ra1}. This is seen as follows:

Consider the canonical von Neumann conditional expectation (see e.g. \cite{Sa}),  $$E_{\mathcal L(\Gamma)}^{\lkm}(\cdot),$$ defined  on $P=\lkm$, with values onto $\L(\Gamma)$.
Then we prove  that the Hecke algebra representation on $\L(\Gamma)$ associated to $\pi, G, \Gamma$ as in \cite{Ra1}, is the same, up to unitary equivalence,  as the representation associating to a double coset $\Gamma\sigma\Gamma$ in $G$, the completely positive map
 $$ \Psi_{\Gamma \sigma \Gamma}(x) = E_{\mathcal L(\Gamma)}^{\lkm}(\alpha_{\sigma}(x)), \  \
x\in \L(\Gamma),\sigma \in G.$$

The Hecke operators are  the "block matrix coefficient" corresponding to the space of $\Gamma$-invariant vectors  of a unitary representation $W=(W_g)_{g \in G}$  of $G$
(extending to $\G$) on a larger Hilbert space. In the case studied here,  the Hecke operators (see \cite{Ra1}), act on the space of $\Gamma$-invariant vectors of the diagonal,
unitary representation of $G$, defined by the formula  $\pi\otimes \overline {\pi}\cong {\rm Ad\ }\pi  $. The representation $W$ is then obtained from $\alpha$ by letting it act
unitarily on the Hilbert space $L^2(P,\tau)$ associated to the canonical trace on $P$. Then, the homomorphism $\alpha$, which induces $W$, yields a tensor product  factor
of the canonical  representation $G^{\rm op}$, introduced above, into the $\operatorname{Aut}(\mathcal M)$.

It is obvious to see that the homomorphism $\alpha$ of $G$ extends to a representation of the group $\G$ into the bounded linear operators on the factor $P$.
Indeed, to construct such an extension one associates to the left convolutor by the characteristic function of a group $K_\sigma, \sigma\in G$, the conditional expectation on the algebra $\mathcal A_{\Gamma\sigma}$ introduced in formula
(\ref{asigma}).

The content of the Ramanujan-Petersson conjectures for the representation $\Psi$ of the Hecke algebra (\cite{krieg}) of double cosets of $\Gamma$ in $G$ constructed above,
acting  on the orthogonal complement of  the 1-dimensional space of scalars, in the space $L^2(P)$, is then equivalent to the weak containment of the unitary
representation induced by $\alpha$ on this Hilbert space,  in the left regular representation of $\G$.

 % Note that the representation $\alpha_g, g\in G$ of $G$ (with $\alpha_\gamma$ leaving  $L(\Gamma)$ fixed, for $\gamma$ in $\Gamma$), into the automorphism group of $\lkm$, which  determines the Hecke algebra representation, is a factor of the representation of $G^{\rm op}$ on $L(G \rtimes L^{\infty}(\S,\mu))$.

%Thus the Hecke algebra representation constructed in [Ra1], is in fact induced by  an action of $G^{\rm op}$ on $\lkm$, which in turn is a tensor product factor action in the action of $G^{\rm op}$ on $L(G \rtimes L^{\infty}(\S, \mu))$.

\vspace{0.5cm}

\section{Construction of the $G^{\rm op}$-equivariant splitting ${\mathcal M}\cong  {\mathcal P} \otimes B(l^2(\Gamma \backslash G))$.
Proof of Theorem \ref{split}. I }\label{thesplitting11}

This section is mainly concerned with proving part of  Theorem \ref{split} and Corollary \ref{defalpha}.
We use the matrix coefficients of the unitary representation $\pi$  of $G$, which has  the property  that $\pi|_\Gamma$ is unitary equivalent to the left regular representation of $\Gamma$. We construct directly a    $G^{\rm op}$-equivariant embedding of $B(l^2(\Gamma \backslash G))$ into $\mathcal M$. Here $B(l^2(\Gamma \backslash G))$ is  acted by ${\rm Ad}\rho_{\; \Gamma/G}$.
We prove that this  equivariant representation of $B(l^2(\Gamma \backslash G))$   is  splitting $G^{\rm op}$-equivariantly, in the sense of tensor products,  the algebra
$\mathcal M$.

We recall  first the construction of the  C$^\ast$-representation $t$ of the Hecke algebra of double cosets of $\Gamma$ in $G$ into $\mathcal L(G)$, introduced in \cite{Ra1} (see also \cite{Ra3}). This representation was  subsequently extended to arbitrary Murray von Neumann dimensions in \cite{Ra2}. The representation will be used in the construction of the $G^{\rm op}$-equivariant matrix unit corresponding to the
$G^{\rm op}$-equivariant embedding of $B(l^2(\Gamma \backslash G))$ into $\mathcal M$.

Let $\H_0=\C(\Gamma \setminus G / \Gamma)$ be  the Hecke algebra of double cosets (see e.g. \cite{bc}). We let $\H_0$ act canonically on left and respectively right cosets, by left and respectively right multiplication.
Let $\H\subseteq B(l^2(\Gamma\backslash G))$ be the uniform norm closure of $\H_0$. In the terminology introduced in \cite{bc}, the C$^\ast$-algebra $\H$ is   the reduced Hecke von Neumann algebra  associated to the inclusion $\Gamma \subseteq G$.

In \cite{Ra1} (see also \cite{Ra3} for another exposition of the construction, and see Section \ref{appendix} for another proof of the  extension of this construction to arbitrary $D_\pi$) we constructed, using the matrix coefficients of the representation $\pi$, a representation $t : \H \to \mathcal L(G)$ such that $t^{\Gamma\sigma\Gamma}=t([\Gamma\sigma\Gamma])\in l^2(\Gamma\sigma\Gamma)\cap \mathcal L(G)$.
%The group algebra support of $t([\Gamma\sigma\Gamma])$ is contained in the closure of the span of the group elements belonging to $\Gamma\sigma\Gamma$, for any double coset $[\Gamma\sigma\Gamma] $.
%%%%%%%%%%%%%%%%%%%%%%%%

The precise formula for the representation $t$ is as follows: let $1$ be a trace vector for $\Gamma$ in the Hilbert space $H_{\pi}$ of the representation $\pi$. For  a subset  $A$ of $G$, define
\begin{equation}\label{t}
t^A=t(A) = \mathop{\sum}\limits_{\theta \in A}\overline{\langle \pi(\theta) 1, 1 \rangle} \theta.
\end{equation}

\noindent Then
$$
[\Gamma\sigma\Gamma]\rightarrow t([\Gamma\sigma\Gamma]),\quad \sigma\in G,$$
extends to a representation of $\H$ into $\mathcal L(G)$.
This also works when the representation is projective (\cite{Ra1}, \cite{Ra3}).

We proved in \cite{Ra1} that formula (\ref{t}) implies that  the representation $t$ of $\H_0$ extends to a representation of the larger operator system
$\S\O$ defined in formula  (\ref{os}). This system  is canonically identified to the vector space
$$\C\{\sigma_1\Gamma\sigma_2 \mid \sigma_1,\sigma_2 \in G\}.$$
In \cite{Ra1} (see also \cite{Ra3}) we proved that the  representation $t$ of $\H_0$ extends to a  representation $t : \S\O \to \mathcal L(G)$.
This representation is constructed using the matrix coefficients of the representation $\pi$ as above in formula (\ref{t}).

Let  $e$ be the neutral element of $G$, viewed as the identity element of the algebra $\mathcal L(G)$.
We say that  $t$ is a   representation of the operator system $\S\O$ if  the following  sets of identities hold true:
\begin{equation}\label{id1}
t(\Gamma)= e.
\end{equation}
\begin{equation}\label{id2}
t(\sigma\Gamma)^{\ast} = t(\Gamma\sigma^{-1}),\quad \sigma \in G.
\end{equation}
\begin{equation}\label{id3}
t(\sigma_1\Gamma)t(\Gamma\sigma_2) = t(\sigma_1\Gamma\sigma_2),\quad  {\sigma_1, \sigma_2 \in G}.
\end{equation}

We proved in \cite{Ra1} (see also \cite{Ra2}, \cite{Ra4}) that there exists a one-to-one correspondence between   unitary representation $\pi$ and representations $t$
of the operator system $\S\O$ with the above properties.
%In the proof of the next theorem  we prove  an equivalent characterization of the properties of the previous representation $t$.

We identify the cosets of $K$ with the cosets of $\Gamma$,    by taking the closure in the profinite completion.
Let $l^{\infty}(\Gamma \backslash  G)$, $l^{\infty}(G / \Gamma)$ be the algebras of bounded left, and respectively right $\Gamma$-invariant functions,
that is the algebras generated by the characteristic functions of left (respectively right) cosets of $G$ by $\Gamma$.
These algebras are identified with the subalgebras of $L^{\infty}(K\backslash\G, \mu)$ and $L^{\infty}(\G/K, \mu)$  of left, and respectively right,
$K$-invariant bounded functions on $\G$. We denote by $l^2(\Gamma \backslash G)$, $l^2(G / \Gamma)$ the corresponding Hilbert spaces, and denote by
$\rho_{\Gamma/G}$, $\lambda_{G/\Gamma}$, the corresponding quasi-regular unitary representations of $G$.

For notational simplicity, when no confusion is possible, we denote in the sequel the characteristic function $\chi_{\sigma K}$ of the coset $\sigma K=\overline{\sigma \Gamma}\subseteq \G$ by
$\chi_{\sigma \Gamma}$, for $\sigma \in G$.

In Theorem \ref{split} we prove that an alternative method  to obtain representations $t$
as above, is to  the use of  following $G^{\rm op}$-equivariant matrix unit embedded in   the crossed product algebra $\mathcal{M}$.
$$
(v_{\Gamma\sigma_1, \Gamma\sigma_2})_{\Gamma\sigma_1, \Gamma\sigma_2 \in \Gamma \setminus G} =
(\chi_{\Gamma\sigma_1}t^{\Gamma\sigma_1\sigma_2^{-1}\Gamma}\chi_{\Gamma\sigma_2})_{\Gamma\sigma_1, \Gamma\sigma_2 \in \Gamma \setminus G}.
$$
The $G^{\rm op}$-equivariance of the matrix unit  is assumed to hold true with respect to the adjoint of  the right unitary representation
$\rho _{\Gamma \setminus G}$ of $G^{\rm op}$ into $l^2(\Gamma \setminus G)$.

The existence of such a matrix unit implies  that the von Neumann algebra $\mathcal{M}$ is $G^{\rm op}$-equivariantly isomorphic to $\chi_{K}\mathcal{M}\chi_{K} \otimes B(l^2(\Gamma \backslash G))$. It also implies  the isomorphism
\begin{equation}\label{corneriso}
\chi_{K}\mathcal{M}\chi_{K} =\chi_K(\lnu )\chi_{K}\cong \lkm =P.
\end{equation}
\noindent
 This is a consequence of the fact that the projections in the family $\chi_{gK}$, where $g\Gamma$ runs over a family of coset representatives of $\Gamma$ in $G$, are a partition of unity. The unit of the  algebra $\chi_{K}\mathcal{M}\chi_{K}$ is identified with $\chi_{K}=\chi_{\overline{\Gamma}}$.

 The $G^{\rm op}$-equivariant isomorphism  $$\lnu
\cong\chi_{K}\mathcal{M}\chi_{K} \otimes B(l^2(\Gamma \backslash G))$$
holds true  with respect to   a tensor product representation of $G^{\rm op}$ into the automorphism group of  $\chi_{K}\mathcal{M}\chi_{K} \otimes B(l^2(\Gamma \backslash G))$, of the form
$$\alpha_g \otimes {\rm Ad}\rho_{\Gamma \setminus G}(g), \quad  g \in \gop.$$

 % \section{Equivalent descriptions of a unitary representation of $G$ extending the left regular representation of $\Gamma$}

Recall that $\Gamma \subseteq G$ is a pair consisting of a discrete group $G$ and an almost normal subgroup $\Gamma$, both assumed to be i.c.c.
Let $\S$ be the downward directed class of subgroups of $\Gamma$, generated by $\Gamma_{\sigma} = \sigma\Gamma\sigma^{-1}\cap \Gamma$, $\sigma \in G$.
Assume that $\S$ separates the points of $\Gamma$. Let $(K, \mu)$ be the corresponding profinite completion of $\Gamma$.
Let $(\G, \mu)$ be the Schlichting extension of $G$, as introduced in Section 2. Then $\mu$ is the Haar measure
on $\G$, normalized by the condition that $\mu(K)=1$.

We assume that for all $\sigma \in G$ the subgroups $\Gamma_{\sigma}$, $\Gamma_{\sigma^{-1}}$ have equal indices.
In particular, the Haar measure on $\G$ is bivariant. Also, we assume that $G$ acts ergodicaly on $\G$, and that all groups in $\S$ are i.c.c.  Consequently, the reduced von Neumann algebra crossed product factors
$$\mathcal{M} = \mathcal L(G \rtimes L^{\infty}(\G, \mu))\quad \mbox{\rm and} \quad P = \lkm$$
\noindent are type II$_{\infty}$ (respectively II$_1$) factors.
If  the unitary representation $\pi$ is projective with cocycle $\varepsilon$, then in the definitions of $\cM$ and $P$ we take the $\varepsilon$-skewed crossed product von Neumann algebras.

We consider the following outer action of $\gop$ on $\mathcal M$ introduced in Section \ref{intro}.
\begin{defn} \label{defbeta}
Let $\beta : G \to \Aut(\cM)$ be the canonical homomorphism of $\gop$ into the automorphism group of $\cM$, acting by leaving $\mathcal L(G)$ invariant and acting
by composition with right translation on $L^{\infty}(\G, \mu)$.
\end{defn}

\begin{rem}\label{groupoid}
    Consider the canonical  action by left and right multiplication of $G\times G^{\rm op}$ on $L^{\infty}(\G, \mu)$. To this action corresponds the reduced crossed product von Neumann algebra
    $$\mathcal L((G \times G^{\rm op}) \rtimes L^{\infty}(\G, \mu)).$$

The above action reduces to   a canonical   groupoid action of $G\times G^{\rm op}$ on $K$. This is obtained  by letting,
  for $g_1,g_2 \in G$, the domain of the transformation induced by  $(g_1, g_2)\in G\times G^{\rm op}$,  be
   $$\{k\in K| g_1kg_2^{-1}\in K\}.$$
Obviously, $G\times G^{\rm op}$ acts by measure preserving transformations on $K$. Let
$$\mathcal B_\infty=\mathcal L((G \times G^{\rm op}) \rtimes L^{\infty}(K, \mu))$$
be the corresponding reduced $C^\ast$-groupoid algebra.
It is then obvious that
   \begin{equation}\label{binfty}
\chi_K\big[ \mathcal L((G \times G^{\rm op}) \rtimes L^{\infty}(\G, \mu))\big]\chi_K\cong \big[ \mathcal L((G \times G^{\rm op}) \rtimes L^{\infty}(K, \mu))\big].
\end{equation}

\end{rem}

Theorem \ref{split} shows that the data from the representation $\pi$ is encoded in a unitary representation of a special form of $G$, in the unitary group of the algebra $\mathcal B_\infty$.
Equivalently, this corresponds to a special $G^{\rm op}$-equivariant splitting of the algebra $\mathcal{M}$.

% 5 We note that the $G^{\rm op}$-equivariance condition in (iv)  signifies that

%We also note that, as proved in [Ra 1], property (2) implies that with $t^{\Gamma\sigma\Gamma} = \mathop{\sum}\limits_{i} t^{\Gamma\sigma s_i} = \mathop{\sum}\limits_{j} t^{v_j\sigma\Gamma}$ where $s_i, v_i$ are the corresponding coset representatives. Then, for $\sigma\in G$, we have that the map $[\Gamma\sigma\Gamma] \to \chi_{\overline{\Gamma}}(t^{\Gamma\sigma\Gamma}\otimes t^{\Gamma\sigma\Gamma})\chi_{\overline{\Gamma}}$ with values in $\chi_{\overline{\Gamma}}(L^{\rm op}(G \times G^{\rm op}) \rtimes L^{\infty}(\S, \mu))\chi_{\overline{\Gamma}}$ is a von Neumann algebras isomorphism preserving the trace.
 %In fact, we have  a groupoid action of $G \times G^{\rm op}$ on $K$, and the above maps extend by linearity to a $\ast$ - algebra morphism from the Hecke algebra $\C(\Gamma \setminus G /\Gamma)$ into $C_{\rm red}^\ast((G \times G^{\rm op})\rtimes K)$. This representation is trace preserving, and hence extends to the reduced $C^{\ast}$ - Hecke algebra $\H$.

\begin{proof}[Proof of Theorem \ref{split}]
By construction, the diagonal algebra $l^{\infty}(\Gamma \setminus G)$ is independent of the choice of  the type I$_\infty$ algebra $B(l^2(\Gamma \setminus G))$
associated to the $\gop$-equivariant matrix unit.

We consider the algebra
$ L^{\infty}(K\setminus\G, \mu)$. This algebra is the weak closure of the linear span of cosets of the form $\chi_{\overline{\Gamma{\sigma}}}$ with $\sigma \in G$.
 Then we have  the isomorphism:
 \begin{equation}\label{prescribeddiagonal}
l^{\infty}(\Gamma \setminus G)\cong L^{\infty}(K\setminus\G, \mu)\subseteq \lnu.
\end{equation}
The left coset $K\sigma$ is $\overline{\Gamma\sigma}$, where the closure is taken in $\G$ for $\sigma \in G$. A similar result holds true  for right cosets.
To simplify notation, recall that when no confusion is possible, we are denoting the characteristic function $\chi_{\sigma K}$ of the coset $\sigma K=\overline{\sigma \Gamma}$ by
$\chi_{\sigma \Gamma}$ for $\sigma \in G$.

The requirement  in part (iv)  is therefore to find a $G^{\rm op}$-equivariant copy of $B(l^2(\Gamma \setminus G))$ inside $\lnu$,
with the prescribed diagonal algebra introduced in formula (\ref{prescribeddiagonal}).

The implication  (i) $\Rightarrow$ (ii) was proved in \cite{Ra1} (see also \cite{Ra2}, \cite{Ra3}).

%insert, Prop 58
The converse implication (ii) $\Rightarrow$ (i) is the content of Proposition 58 in \cite{Ra1}.
For convenience of the reader, we recall this proof here in the case where the representation is unitary (for the projective case see \cite{Ra1},\cite{Ra3}).

For a subset $A$ of $G$ we use the notation
$$t^A=\sum_{\theta \in A} t(\theta)\theta.$$
\noindent Let $\sigma\in G$ and let $(s_i)$ be set of representatives for $\Gamma_{\sigma^{-1}}$ in $\Gamma$. We define
$$
\pi(\sigma) s_i = [t^{\Gamma\sigma s_i}(\sigma s_i)^{-1}]^*,\quad i=1,2,\dots,[\Gamma:\Gamma_{\sigma^{-1}}].
$$
The element on the right hand side of the above equation belongs to $\mathcal L(\Gamma)$ (\cite{Ra1}, \cite{Ra3}, \cite{Ra4}).

The fact that $\pi(\sigma)$ is a representation follows form the identity
$$t(\theta_1\theta_2)= \sum_{\gamma\in\Gamma} t(\theta_1\gamma)t(\gamma^{-1}\theta_2),\quad \theta_1,\theta_2 \in G.$$
The above  identity is in turn a consequence of the identity
$$t^{\sigma_1\Gamma}t^{\Gamma{\sigma_2}}=t^{\sigma_1\Gamma\sigma_2}.$$

To prove (iv) $\Rightarrow$ (ii) we proceed as follows. For $\sigma$ in $G$, let
\begin{equation}\label{defx}
X^{\Gamma\sigma\Gamma} = \mathop{\sum}\limits_{\Gamma\sigma_1\sigma_2^{-1}\Gamma = \Gamma\sigma\Gamma} v_{\Gamma\sigma_1, \Gamma\sigma_2},
\end{equation}
with sum over all cosets $\Gamma\sigma_1, \Gamma\sigma_2 \in \Gamma \setminus G$ such that $\Gamma\sigma_1\sigma_2^{-1}\Gamma = [\Gamma\sigma\Gamma]$.

We recall that the $G^{\rm op}$-equivariance of the matrix unit
$$(v_{\Gamma\sigma_1, \Gamma\sigma_2})_{\Gamma\sigma_1,\Gamma\sigma_2}$$
means that
$$\beta_g(v_{\Gamma\sigma_1, \Gamma\sigma_2}) = v_{\Gamma\sigma_1 g, \Gamma\sigma_2 g},\quad g\in G.$$
\noindent Hence
$$\beta_g(X^{\Gamma\sigma\Gamma}) = X^{\Gamma\sigma\Gamma},\quad g \in G^{\rm op}, \sigma \in G.$$ %
\noindent
It follows from formula (\ref{defx}) that $X^{\Gamma\sigma\Gamma}$ belongs to the algebra $ \cM^G$ of fixed points for the action of $G$ on $\cM$.
Since we assumed that $G^{\rm op}$ acts ergodicaly on $\G$, it follows that
$$\cM^G=\mathcal L(G),$$
and hence
$$X^{\Gamma\sigma\Gamma}\in \mathcal L(G),\quad \sigma \in G.$$

Obviously, since $v_{\Gamma\sigma, \Gamma\sigma}$ is equal to $\chi_{\Gamma\sigma}$, it follows that
the partial isometry $v_{\Gamma\sigma_1, \Gamma\sigma_2}$ will map the space of the projection $\chi_{\Gamma\sigma_2}$ onto $\chi_{\Gamma\sigma_1}$.
Hence, using formula (\ref{defx}) defining $X^{\Gamma\sigma\Gamma}$, it follows   that if $\Gamma\sigma_1, \Gamma\sigma_2$ are so that $$[\Gamma\sigma_1\sigma_2^{-1}\Gamma ]= [\Gamma\sigma\Gamma],$$
\noindent
 then
$$\chi_{\Gamma\sigma_1}X^{\Gamma\sigma\Gamma}\chi_{\Gamma\sigma_2} = v_{\Gamma\sigma_1, \Gamma\sigma_2},\quad  \sigma,\sigma_1, \sigma_2 \in G.$$
\noindent From formula (\ref{defx}) we also obtain that:
$$
\chi_{\Gamma\alpha}X^{\Gamma\sigma\Gamma}\chi_{\Gamma\beta} = \delta_{[\Gamma\alpha\beta^{-1}\Gamma], [\Gamma\sigma\Gamma]}v_{\Gamma\alpha, \Gamma\beta},
\quad\alpha, \beta, \sigma \in G.
$$
 Here, we use the symbol $\delta$ to denote the Kronecker symbol.

Let $\theta$ be any element  in $G$. Then the property that
$$\chi_{\Gamma\sigma_1} \theta \chi_{\Gamma\sigma_2} \neq 0,$$
\noindent is equivalent to the  existence of $\gamma_1, \gamma_2\in\Gamma$ such that
$$\theta\gamma_2\sigma_2 = \gamma_1\sigma_1.$$
This holds true if and only if $\theta\in\Gamma\sigma_1\sigma_2^{-1}\Gamma$.
Consequently  $$X^{\Gamma\sigma\Gamma}\in \L(G) \cap l^2(\Gamma\sigma\Gamma).$$

We analyze  the product of two elements $X^{\Gamma\sigma_1\Gamma}$ and $X^{\Gamma\sigma_2\Gamma}$ as in formula (\ref{defx}), corresponding to two double cosets. The product corresponds to a pairing of cosets in the product of the double cosets.   Because  $(v_{\Gamma\sigma_1, \Gamma\sigma_2})_{\Gamma\sigma_1, \Gamma\sigma_2}$ is a matrix unit, the same pairing of  cosets shows up in the product formula for the corresponding double cosets $[\Gamma\sigma_1\Gamma]$ and  $[\Gamma\sigma_2\Gamma]$ in the Hecke algebra  $\H_0$.

This proves   that  the operators $X^{\Gamma\sigma\Gamma}$, $\sigma \in G$, which are a priori affiliated to $\L(G)$, have also the property that the correspondence
\begin{equation}\label{heckecor}
 [\Gamma\sigma\Gamma] \to X^{\Gamma\sigma\Gamma},\quad \sigma \in G,
 \end{equation}
extends to a $\ast$-algebra representation of $\H_0=\C(\Gamma\setminus G /\Gamma)$.
Obviously, this is trace-preserving with respect to the traces $\tau$ on $\mathcal L(G)$ and the canonical trace $\langle \cdot [\Gamma], [\Gamma]\rangle$ on $\H_0$.
Consequently, the  $\ast$-representation in formula (\ref{heckecor}) extends to a $C^\ast$-representation of  the reduced $C^{\ast}$-Hecke algebra
$$\H = C^{\ast}_{\rm red}(\Gamma \setminus G/\Gamma)=\overline{\H_0}^{||\cdot||}\subseteq B(\ell^2(\Gamma\backslash G)).$$
\noindent
Hence the elements
$$t^{\Gamma\sigma\Gamma}=X^{\Gamma\sigma\Gamma}, \quad \sigma \in G$$
are bounded.

For all $\sigma\in G$, we denote the coefficient of $\theta \in \Gamma\sigma\Gamma$  in  $t^{\Gamma\sigma\Gamma}\in \L(G)$ by $t(\theta)$. Thus
$$t^{\Gamma\sigma\Gamma}=\sum_{\theta\in \Gamma\sigma\Gamma}t(\theta)\theta\in \L(G),\quad \sigma\in G.$$

The property that
$$
\chi_{\Gamma\alpha}t^{\Gamma\alpha\beta^{-1}\Gamma}\chi_{\Gamma\beta}t^{\Gamma\beta\gamma^{-1}\Gamma}\chi_{\Gamma\gamma} = \chi_{\Gamma\alpha}t^{\Gamma\alpha\gamma^{-1}\Gamma}\chi_{\Gamma\alpha}
$$

\noindent implies, when moving in the left side member the characteristic function $\chi_{\Gamma\beta}$ to the right (using the multiplication formula (\ref{rulem},
Section \ref{appendix})), a family  of identities of the form $$\sum t^{A_i}t^{B_i} = \sum t^{C_j}.$$
These family  of identities, when summing  over unions of cosets of subgroups in $\S$ whose unions are $\Gamma$-cosets, produces exactly  the  family of identities
$$
t^{\sigma_1\Gamma}t^{\Gamma\sigma_2} = t^{\sigma_1\Gamma\sigma_2},\quad \sigma_1, \sigma_2 \in G.
$$

\noindent These identities  are exactly the sufficient conditions that imply, as recalled in the introductory part of this section (see \cite{Ra1}),
that the map $[\sigma\Gamma] \to t^{\sigma\Gamma}$, $\sigma \in G$, extends to a representation of the operator system $\S\O$, as in property (ii) in the statement.

To prove (iii) $\Rightarrow$ (iv), we note that
\begin{equation}\label{u}
 u(\sigma) = (v_{\Gamma, \Gamma{\sigma}} \otimes 1)\otimes (1 \otimes \sigma^{-1}),\quad \sigma \in G.
 \end{equation}

\noindent Hence, using formula (\ref{uform}) it follows  that
\begin{equation}\label{vg}
v_{\Gamma, \Gamma{\sigma}} = \chi_{\Gamma}(t^{\Gamma\sigma\Gamma})\chi_{\Gamma\sigma^{-1}}
\end{equation}
is an isometry in the von Neumann algebra ${\mathcal M}$, with initial space $\chi_{\Gamma}$ and range the space of the projection $\chi_{\Gamma\sigma^{-1}}$.

We define
\begin{equation}\label{defv}
v_{\Gamma\sigma_1, \Gamma\sigma_2} = \beta_{\sigma_1}(v_{\Gamma, \Gamma\sigma_2\sigma_1^{-1}}),\quad
\Gamma\sigma_2 \in \Gamma \setminus G.
\end{equation}
\noindent
By $G^{\rm op}$-equivariance and because of formula (\ref{vg}), the expression in \eqref{defv} is equal to
$$\chi_{\Gamma\sigma_1}t^{\Gamma\sigma_1\sigma_2^{-1}\Gamma}\chi_{\Gamma\sigma_2}.$$
Again, because of the $G^{\rm op}$-equivariance, this is a partial isometry from $\chi_{\Gamma\sigma_2}$ onto $\chi_{\Gamma\sigma_1}$.

The property that the family of unitaries $(u(\sigma))_{\sigma \in G}$ is a representation of $G$ translates into the fact that the family
$$(v_{\Gamma\sigma_1, \Gamma\sigma_2})_{\Gamma\sigma_1,\Gamma\sigma_2 \in \Gamma \setminus G},$$
defined in formula \eqref{defv}, is a matrix unit. Since $G^{\rm op}$ acts on ${\mathcal M}$ by leaving $\mathcal L(G)$ invariant, and since it acts through by the Koopmann unitary  representation
by right translations on $L^{\infty}(\G, \mu)$, it follows that the above
 above matrix unit is $G^{\rm op}$-equivariant.

The proof of the implication (i) $\Rightarrow$ (iii) is postponed to Section \ref{proof1to3}.
A direct, alternative proof, valid also for $D_\pi\ne1$, will be given in Theorem \ref{abstractsetting} in Section \ref{appendix}.
\end{proof}

\begin{proof}[Proof of Corollary \ref{defalpha}]
Recall that  $\chi_K\mathcal{M}\chi_K \cong\mathcal L(\Gamma\rtimes L^\infty (K,\mu))=P.$ We let
$$P =\chi_K\mathcal{M}\chi_{K} =\chi_{\overline{\Gamma}}(\mathcal L(G \rtimes L^{\infty}(\G, \mu)))\chi_{\overline{\Gamma}} = \lkm. $$
Here the unit of the above algebra is  identified with the characteristic function $\chi_{\overline{\Gamma}}$.
The construction of the $G^{\rm op}$equivariant splitting \eqref{formspl} is straightforward, once a $G^{\rm op}$-equivariant matrix unit is given.

For $p \in P$, we define
\begin{equation}\label{alpha}
\alpha_g(p) = v_{\Gamma, \Gamma g}\beta_g(p)v_{\Gamma g, \Gamma}.
\end{equation}
This is consistent with the fact  that $\beta_g(p)$ belongs to $\chi_{\Gamma g}\mathcal{M}\chi_{\Gamma g}$, for all $p\in P$ and $g\in G$.
\end{proof}

\begin{rem}\label{uback}
 The relation between the various constructions in the  preceding statement is summarized as follows. Let $\pi$ be as above a unitary representation of $G$. Consider the representation $t$  of the operator system $\S\O$ introduced in formula (\ref{os}).  Then
\item {(i).}
  The formula for the unitary representation $u$ of $G$ into $\mathcal B_\infty$ is
$$u(\sigma)=\chi_K(t^{\Gamma\sigma\Gamma} \otimes \sigma^{-1})\chi_K,\quad \sigma\in G.$$
%Although the following implication is not needed we note that to prove that ($\boldsymbol\alpha$3) implies ($\boldsymbol\alpha$1) it is sufficient to use
\item{(ii).} The formula that gives back the representation $\pi$ from $t$ (or $u$) is $$\pi(\sigma)(x) =
E_{\mathcal L(\Gamma)}^{\mathcal L(G)}(t^{\Gamma\sigma\Gamma}x\sigma^{-1}),\quad  x \in l^2(\Gamma), \sigma \in G. $$

\item{(iii).}
In the identification $\chi_K\mathcal{M}\chi_K\cong \mathcal L(\Gamma\rtimes L^\infty (K,\mu))$, the homomorphism $\alpha$ coincides with the homomorphisms $\alpha$
constructed in the introduction, using the simultaneous infinite Jones basic construction.

 \item{(iv)} The restriction $\alpha|_\Gamma$ acts as the identity operator on the subalgebra $$\mathcal L(\Gamma) \subseteq \mathcal L(\Gamma)\rtimes L^\infty (K,\mu).$$

\end{rem}

\begin{proof}
The only non-obvious property is contained in (iv). We use the notations from the proof of the previous statement.
Since $G^{\rm op}$ acts trivially on $G$, it follows that $\beta_g$ acts trivially on $\mathcal L(\Gamma)$ for $g\in G$. It follows that $\alpha_{\gamma}$ acts trivially on $\mathcal L(\Gamma)$ for $\gamma$ in $\Gamma$.
Indeed, in this case, for every $$x \in \mathcal L(\Gamma) \subseteq \chi_{\overline{\Gamma}}\lkm \chi_{\overline{\Gamma}},$$
and for every $ \gamma \in \Gamma^{\rm op}$, we have that
$$\alpha_{\gamma}(x) = v_{\Gamma, \Gamma}\beta_{\gamma}(x)v_{\Gamma, \Gamma} = x,$$
showing that $\alpha |_{\Gamma}$ acts identically on $\mathcal L(\Gamma)$.
\end{proof}

In the  proof of Theorem \ref{split} we showed that there exists a correspondence between representations $\pi$ of $G$ with the properties from (i) of the preceding
theorem and  unitary representations $u$ of $G$, having an expression as in formula (\ref{u}), with values  in the subgroup of elements of
 $\U (\B_\infty)$ that are normalize $\A_\infty$. We state this separately in the following proposition.

\begin {prop}\label{doublecross}
We use the notation and  the equivalent hypothesis from  Theorem \ref{split}. Recall that
$$\A_\infty=\lkm,\quad \B_\infty=\mathcal L((G \times G^{\rm op}) \rtimes L^\infty (K, \mu)).$$
\noindent
We embed $G$ into the first component of $G \times G^{\rm op}$, and canonically extend this embedding  to an embedding of   $\A_\infty$  into $\B_\infty$.
Let $u$ be the unitary representation of $G$ into $\U(\B_\infty)$, constructed  in formula (\ref {u}). Then

 \item{(i)}  For every $\sigma\in G$ the unitary
  $u(\sigma)$ normalizes $\A_\infty$.
  % (Recall that this signifies that  Ad $u(\sigma)$ invariates $\A_\infty$).

\item{(ii)}
The homomorphism $\alpha$ associated to $\pi$, constructed in Corollary \ref{defalpha} and mapping $G$  into $\operatorname{Aut}(P=\A_\infty)$, is computed by the formula:
\begin{equation}\label{normalizer}
\alpha_\sigma= {\rm \ Ad\ } u(\sigma)|_{\A_\infty},\quad \sigma \in G.
\end{equation}
\item{(iii)} The homomorphism $\alpha$ extends to a homomorphism of $\G$ into $\operatorname{Aut}(\A_\infty)$.
\end{prop}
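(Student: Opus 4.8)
The plan is to read off from formulas \eqref{u}, \eqref{uform} and \eqref{vg} the single algebraic identity
$$u(\sigma)=v_{\Gamma,\Gamma\sigma}\,w_\sigma ,$$
valid inside $\B_\infty\subseteq\mathcal L((G\times G^{\mathrm{op}})\rtimes L^\infty(\G,\mu))$, where $v_{\Gamma,\Gamma\sigma}=\chi_\Gamma\,t^{\Gamma\sigma\Gamma}\,\chi_{\Gamma\sigma^{-1}}\in\mathcal M$ is the matrix-unit partial isometry constructed in the proof of Theorem~\ref{split} and $w_\sigma$ is the canonical unitary of the $G^{\mathrm{op}}$-crossed product (the element ``$1\otimes\sigma^{-1}$'' of \eqref{u}), whose adjoint action is trivial on $\mathcal L(G)$ and acts by right translation on $L^\infty(\G,\mu)$, i.e. $\operatorname{Ad}w_\sigma|_{\mathcal M}=\beta_\sigma$. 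That $u(\sigma)=\chi_K u(\sigma)\chi_K$ lies in the corner $\B_\infty=\chi_K[\,\cdot\,]\chi_K$ of \eqref{binfty} follows since $v_{\Gamma,\Gamma\sigma}$ has final projection $\chi_K$ and $\chi_{\Gamma\sigma^{-1}}w_\sigma\chi_K=w_\sigma\chi_K$. Throughout one uses the identification $\A_\infty=P=\chi_K\mathcal M\chi_K$ of \eqref{corneriso}, which is exactly the copy of $\mathcal L(\Gamma\rtimes L^\infty(K,\mu))$ embedded via the first $G$-factor.

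For (i) and (ii), fix $a\in\A_\infty=\chi_K\mathcal M\chi_K$. Because $\operatorname{Ad}w_\sigma$ restricts to $\beta_\sigma$ on $\mathcal M$, the factorization gives
$$u(\sigma)\,a\,u(\sigma)^{\ast}=v_{\Gamma,\Gamma\sigma}\,\beta_\sigma(a)\,v_{\Gamma,\Gamma\sigma}^{\ast}.$$
Now $\beta_\sigma(\A_\infty)=\beta_\sigma(\chi_K\mathcal M\chi_K)$ is the corner of $\mathcal M$ cut by $\beta_\sigma(\chi_K)$, and this is precisely the corner linked to $\chi_K\mathcal M\chi_K$ by the partial isometry $v_{\Gamma,\Gamma\sigma}$ (that is the content of the matrix-unit relation $v_{\Gamma,\Gamma\sigma}v_{\Gamma,\Gamma\sigma}^{\ast}=\chi_K$ together with $v_{\Gamma,\Gamma\sigma}^{\ast}v_{\Gamma,\Gamma\sigma}=\beta_\sigma(\chi_K)$); hence conjugation by $v_{\Gamma,\Gamma\sigma}$ carries $\beta_\sigma(\A_\infty)$ $\ast$-isomorphically back onto $\chi_K\mathcal M\chi_K=\A_\infty$. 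Thus $u(\sigma)\A_\infty u(\sigma)^{\ast}=\A_\infty$ (the reverse inclusion follows by applying this to $\sigma^{-1}$, using $u(\sigma)^{\ast}=u(\sigma^{-1})$), which is (i). Using $v_{\Gamma,\Gamma\sigma}^{\ast}=v_{\Gamma\sigma,\Gamma}$, the displayed formula becomes $u(\sigma)a u(\sigma)^{\ast}=v_{\Gamma,\Gamma\sigma}\,\beta_\sigma(a)\,v_{\Gamma\sigma,\Gamma}$, which is verbatim the defining formula \eqref{alpha} of Corollary~\ref{defalpha} for $\alpha_\sigma(a)$. This proves \eqref{normalizer}, namely $\alpha_\sigma=\operatorname{Ad}u(\sigma)|_{\A_\infty}$, and, since the right-hand side is a restriction of $\operatorname{Ad}\circ u$ to a commonly invariant subalgebra, re-proves that $\sigma\mapsto\alpha_\sigma$ is a genuine homomorphism into $\Aut(P)$.

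For (iii) I would extend $u$ — equivalently $\alpha$ — from $G$ to $\G$ by density. The action $\beta$ of $G^{\mathrm{op}}$ on $\mathcal M=\mathcal L(G\rtimes L^\infty(\G,\mu))$ by right translations extends at once to a strongly continuous action of $\G$, since $\mu$ is bivariant. The key point is that the matrix unit $(v_{\Gamma\sigma_1,\Gamma\sigma_2})$ — indexed by right $K$-cosets and lying in $\mathcal M$ — is in fact $\G$-equivariant: $\beta_g(v_{\Gamma\sigma_1,\Gamma\sigma_2})=v_{\Gamma\sigma_1 g,\Gamma\sigma_2 g}$ for all $g\in\G$, where the right-hand side is the matrix-unit element attached to the right $K$-cosets $K\sigma_1 g,\,K\sigma_2 g$ (each of which meets $G$, as $K$ is open). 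This is proved by approximating $g$ by $g_n\in G$: because $K$ is open and $\S$ separates the points of $\Gamma$, the cosets $K\sigma_i g_n$ are constant for $n$ large, so $g\mapsto v_{\Gamma\sigma_1 g,\Gamma\sigma_2 g}$ is locally constant on $G$ near $g$ and strong continuity of $\beta$ identifies the limit with $\beta_g(v_{\Gamma\sigma_1,\Gamma\sigma_2})$. Granting this, the Corollary~\ref{defalpha} formula $\alpha_g(a)=v_{\Gamma,\Gamma g}\,\beta_g(a)\,v_{\Gamma g,\Gamma}$ is meaningful for every $g\in\G$ (writing $Kg$ as $K\tau$, $\tau\in G$), and the computation of part~(ii) together with the $\G$-equivariance shows that $g\mapsto\alpha_g$ is a homomorphism $\G\to\Aut(\A_\infty)$ restricting to $\alpha$ on $G$. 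Equivalently, one may extend $u$ itself to a strongly continuous unitary representation of $\G$ into $\U(\B_\infty)$ — on each double-coset cell $K\sigma K$ the expression \eqref{uform} has the fixed $\mathcal L(G)$-coefficient $t^{\Gamma\sigma\Gamma}$ and a $G^{\mathrm{op}}$-part extending continuously to $\G^{\mathrm{op}}$ — and set $\alpha_g=\operatorname{Ad}u(g)|_{\A_\infty}$.

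The genuine work lies in part (iii): one must verify that the approximation argument really yields a strongly continuous, $\G$-equivariant matrix unit (equivalently, that the extended $u$ stays unitary-valued and multiplicative on all of $\G$, which in turn rests on $K$ being open and $\S$ separating the points of $\Gamma$). Parts (i) and (ii) are a one-line matrix-unit manipulation once $u(\sigma)=v_{\Gamma,\Gamma\sigma}w_\sigma$ is recorded; the only care needed there is bookkeeping of the $G$-versus-$G^{\mathrm{op}}$ and left-versus-right-coset conventions so that the outcome matches \eqref{alpha} on the nose.
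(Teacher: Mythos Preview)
Your argument for parts (i) and (ii) is essentially the paper's own proof: the paper records the same factorization
\[
u(\sigma)=\chi_{\overline\Gamma}(t^{\Gamma\sigma\Gamma}\otimes 1)\chi_{\overline{\Gamma\sigma}}(1\otimes\sigma^{-1})=(v_{\Gamma,\Gamma\sigma}\otimes 1)(1\otimes\sigma^{-1}),
\]
and then computes, for $x\in P$,
\[
u(\sigma)(x\otimes 1)u(\sigma)^{\ast}=(v_{\Gamma,\Gamma\sigma}\,\beta_\sigma(x)\,v_{\Gamma\sigma,\Gamma})\otimes 1,
\]
which is exactly your identity $u(\sigma)a u(\sigma)^{\ast}=v_{\Gamma,\Gamma\sigma}\beta_\sigma(a)v_{\Gamma\sigma,\Gamma}=\alpha_\sigma(a)$. (A word of caution: the paper's formula \eqref{vg} and its proof of the proposition disagree on whether the right support of $v_{\Gamma,\Gamma\sigma}$ is $\chi_{\Gamma\sigma^{-1}}$ or $\chi_{\Gamma\sigma}$; your line ``$\chi_{\Gamma\sigma^{-1}}w_\sigma\chi_K=w_\sigma\chi_K$'' inherits this ambiguity, since $w_\sigma\chi_K=\chi_{K\sigma}w_\sigma$. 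This is harmless bookkeeping, but be consistent with whichever convention makes $v_{\Gamma,\Gamma\sigma}^{\ast}v_{\Gamma,\Gamma\sigma}=\beta_\sigma(\chi_K)$.)

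For part (iii) you take a genuinely different route. The paper does \emph{not} argue by density of $G$ in $\G$; instead it extends $\alpha$ algebraically by prescribing the image of the generating projections of $C^{\ast}(\G)$: the normalized convolutor $p_\sigma=\mu(K_\sigma)^{-1}\chi_{K_\sigma}$ is sent to the conditional expectation $E^{\A_\infty}_{\A_{\Gamma\sigma}}$ onto the subfactor $\A_{\Gamma\sigma}$ of \eqref{asigma}. This ties the extension directly to the Jones-projection picture and makes the $K$-action visible as averaging onto the tower of subalgebras. Your approach --- extend $\beta$ to a strongly continuous $\G$-action, observe that the matrix unit is $\G$-equivariant because $g\mapsto v_{\Gamma\sigma_1 g,\Gamma\sigma_2 g}$ is locally constant (each $\sigma_i^{-1}K\sigma_i g$ being an open neighbourhood of $g$), and then read off $\alpha_g$ from \eqref{alpha} --- is correct and arguably more self-contained as a proof that $\alpha$ is a genuine homomorphism of $\G$ into $\Aut(\A_\infty)$. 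The paper's version, by contrast, is a one-line sketch that emphasizes the structural content (the $K_\sigma$-average is $E^{\A_\infty}_{\A_{\Gamma\sigma}}$), which is what gets used later in the Hecke-operator discussion; your version gives the continuity for free but hides that identification.
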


\begin{proof}
We use the notation from the previous theorem and its proof.
We recall that the formula (see formulae (\ref{u1}), (\ref{u})) for $u(\sigma)$ is
\begin{equation*}
\begin{split} u(\sigma) & =\chi_{\overline{\Gamma}}(t^{\Gamma\sigma\Gamma} \otimes 1)(1 \otimes \sigma^{-1})\chi_{\overline{\Gamma}} \\
& =\chi_{\overline{\Gamma}}(t^{\Gamma\sigma\Gamma} \otimes 1)\chi_{\overline{\Gamma\sigma}}(1 \otimes \sigma^{-1}) = (v_{\Gamma, \Gamma\sigma} \otimes 1)(1 \otimes \sigma^{-1}),
\quad \sigma \in G.
\end{split}
\end{equation*}
Thus,  for $x = \chi_{\overline{\Gamma}}x\chi_{\overline{\Gamma}}\in\lkm$, which is identified to $$ \lkm\otimes 1,$$ we have that $u(g) x u(g)^{\ast}$ is equal, with the above identification, to $$u(g)(x \otimes 1)u(g)^{\ast} = (v_{\Gamma, \Gamma\sigma} \otimes 1)\chi_{\Gamma\sigma}(1 \otimes \sigma^{-1})[(\chi_{\overline{\Gamma}}x\chi_{\overline{\Gamma}}) \otimes 1](1 \otimes \sigma)(v_{\Gamma\sigma, \Gamma} \otimes 1).$$
This is thus equal to
$$
(v_{\Gamma, \Gamma\sigma}\beta_{\sigma}(x)v_{\Gamma\sigma, \Gamma}) \otimes 1,
$$
and so the unitary $u(\sigma) \in \chi_{\overline{\Gamma}}(\mathcal L((G \times G^{\rm op}) \rtimes L^{\infty}(\G)))\chi_{\overline{\Gamma}}$ normalizes $P$
(which is identified to $P\otimes 1$) and
$$\alpha_{\sigma}(x) = {\rm Ad}u(\sigma)(x), \quad x \in P, \sigma \in G.$$
This completes the proof of properties (i) and (ii) in the statement.

To prove (iii) we proceed as follows.
For $\sigma \in G$, let  $\chi_{K_\sigma}$ be the characteristic function of the  subgroup $K_\sigma$ of $K$ introduced in formula (\ref{ksigma}).
Then  the convolutor in $C^\ast (\G)$ by the function $\mu(K_\sigma)^{-1}\chi_{K_\sigma}$ is a projection, denoted by $p_\sigma$.
Let $\mathcal A_{\Gamma\sigma}$ be the subfactor introduced in formula (\ref{asigma}). To extend $\alpha$ to a homomorphism of the locally compact,
totally disconnected group $\G$, one represents the projection $p_\sigma$  in $C^\ast(\G)$ by the conditional expectation
  $E^{\mathcal A_\infty}_{\mathcal A_{\Gamma\sigma}}$.
\end{proof}

\section{The operator algebra representation of the Hecke algebra associated to the representation $\pi\otimes \overline \pi$.
Proof of Theorem \ref{heckes}}\label{reltohecke}

%%%%%
In this section    we describe the relation between the construction in the previous theorem and the construction of Hecke operators in \cite{Ra1}.

We prove first  that  the Hecke operators associated to the unitary representation of G,
$$\pi\otimes \overline{\pi}\cong {\rm Ad\ } \pi,$$
constructed in \cite{Ra1}, where $\pi$ is as in  part (i) of Theorem \ref{split},  may be constructed directly using  a splitting of    the ergodic action of the group   $G \times G^{\rm op}$ on $\G$ as
in part (iv) in Theorem \ref{split}, formulae (\ref{thesplitting}) and
(\ref{thesplitting1}). Let $\alpha$ be the homomorphism of $G$ into the automorphism group of $P=\mathcal L(\Gamma\rtimes K)$, constructed in Corollary \ref{defalpha}.

\begin{proof}[Proof of Theorem \ref{heckes}]

Note that the definition of $\Psi_{\Gamma\sigma\Gamma}$ is independent of the choice of $\sigma$ in $\Gamma\sigma\Gamma$, since
  by property (iv) in Remark \ref{uback} the homomorphism  $\alpha \vert_\Gamma$ acts as   the identity on $\mathcal L(\Gamma)$.

We also note that because $\mathcal L(\Gamma)$ is the space of $\Gamma$-invariant vectors in $P=\mathcal L(\Gamma\rtimes \L^\infty(K,\mu))$ for the homomorphism $\alpha$,
it follows that formula (\ref{formhecke}) defines indeed a representation of the Hecke algebra.

Recall that by formula (\ref{alpha}) in the proof of Corollary \ref{defalpha} we have
$$\alpha_g{(p)}=v_{\Gamma, \Gamma g}\beta_g(p)v_{\Gamma g, \Gamma}, \quad  p \in P.$$

Because of formula (\ref{normalizer}), we obtain that for all $\sigma\in G$ and
$$x\in \mathcal L(\Gamma) \subseteq \chi_{\overline{\Gamma}}\lkm\chi_{\overline{\Gamma}},$$
we have
$$\alpha_{\sigma}(x)=\chi_{\overline{\Gamma}}t^{\Gamma\sigma\Gamma}\chi_{\Gamma\sigma}x\chi_{\Gamma\sigma}t^{\Gamma\sigma\Gamma}\chi_{\overline{\Gamma}}.$$
Note that the last expression depends only on the coset $\Gamma\sigma \in \Gamma \setminus G$.

In particular, for every
$$x \in \L(\Gamma)=\chi_{\overline{\Gamma}}\L(\Gamma)\chi_{\overline{\Gamma}} \subseteq \chi_{\overline{\Gamma}}\lkm\chi_{\overline{\Gamma}}=\lkm,$$
we obtain that
\begin{equation}\label{e1}
E_{\L(\Gamma)}^{\lkm}(\alpha_{\sigma}(x))=\mathop{\sum}\limits_{i}\chi_{\overline{\Gamma}}t^{\Gamma\sigma\Gamma}\chi_{\Gamma\sigma s_i}x\chi_{\Gamma\sigma s_i}t^{\Gamma\sigma\Gamma}\chi_{\overline{\Gamma}}.
\end{equation}

\noindent Here, the families $(s_i)$, $(t_i)$ are the Pimsner-Popa bases used in the proof of (i) $\Rightarrow$ (iii) in Theorem \ref{split} from Section \ref{proof1to3}.

Then, the  right hand term in the equation (\ref{e1})  is  $$\chi_{\overline{\Gamma}}t^{\Gamma\sigma\Gamma}xt^{\Gamma\sigma\Gamma}\chi_{\overline{\Gamma}}.$$
This expression  is further equal to the value at $x$ of the completely positive map  constructed in \cite{Ra1}, in correspondence with a representation $\pi$ as  in statement (i) of  the previous theorem.
\end{proof}

Consequently, the problem of determining the continuity of the representation $\Psi$ of the Hecke algebra with respect to the norm on the  C$^\ast$-reduced Hecke algebra (\cite{bc}), which is the essence of the Ramanujan-Petersson problem (\cite{Ra1}),  is reduced to the analysis of the unitary representation induced by $\alpha$ on the Hilbert space $L^2(P,\tau)\ominus \mathbb C 1$.

We recall that in the previous section, Proposition \ref{doublecross}, we introduced the following notations:
$$\A_\infty=\lkm,
 \quad \B_\infty=\mathcal L((G \times G^{\rm op}) \rtimes L^\infty (K, \mu)).$$
 We use the trivial embedding $G \times \{e\}\subseteq G \times G^{\rm op}$. We  consider the reduced crossed product von Neumann algebras
 $$\mathcal C_\infty=\mathcal L(G  \rtimes L^\infty (\G, \mu))\subseteq \D_\infty=\mathcal L((G \times G^{\rm op}) \rtimes L^\infty (\G, \mu)).$$
Note that
\begin{equation}\label{abcd}
\A_\infty=\chi_K \mathcal C_\infty \chi_K \ ; B_\infty=\chi_K\D_\infty\chi_K.
\end{equation}
We also consider the algebra $D\subseteq L^\infty (\G, \mu)\subseteq  \mathcal C_\infty$ consisting of left $K$-invariant functions in $L^\infty (\G, \mu)$. Obviously $D\cong \ell^\infty(\Gamma\backslash G)$.

By $\rho_{\Gamma\backslash G} $ we denote the right regular representation of $G$ into the unitary group associated with the Hilbert space $\ell^2(\Gamma\backslash G)$.
With $\rho=\rho_G$ we denote the right regular representation of $G$. For $g \in G$, we identify the  unitary element $\rho_g$ with the  unitary, left convolution operator in $\L(G^{\rm op})\subseteq \D_\infty \ $, that corresponds  to $g$. Then
\begin{equation}\label{defbeta1}
%%%%%%%%%%%%%%%%%%%
 \beta_g= {\rm \ Ad}(\rho_g),\quad g \in G,
 \end{equation}
defines an homomorphism from $G^{\rm op}$ into  $\rm{Aut}(\mathcal C_\infty)$.

It is obvious that every $\beta_g$ acts on $\mathcal C_\infty$ by right translation by $g$ on
 $L^\infty (\G, \mu)$. Each automorphism  $\beta_g$  acts identically on $\L(G)$.

We introduce  below a different  equivalent formulation to the four equivalent statements in Theorem \ref{split}.
This is used to describe the homomorphism $\alpha$ introduced in Corollary \ref{defalpha},
in a form that is independent of the choices made in the tensor product splitting defining $\alpha$.

\begin{lemma}\label{leftreg1}
The equivalent statements in Theorem \ref{split} are further equivalent
to the following statement:
\item {(v)} There exists a unitary representation
$\theta: G\rightarrow \U(\mathcal C_\infty)$ with the following properties

\indent {(v.1)} The unitary  representation $\theta$ of $G$ is unitarily equivalent to the representation $\rho_{\Gamma\backslash G} $.

 \indent {(v.2)} For all $g$ in $G$, the unitary element $\theta(g^{-1})\rho(g)$ belongs to
the relative commutant $D'\cap \mathcal C_\infty$. Equivalently,
$${\rm Ad} \theta_g|_D= {\rm Ad} \rho_g|_D,\quad g\in G.$$

\indent  {(v.3)} The unitaries $$W_g= \theta(g^{-1})\rho(g),\quad g\in G,$$
define a unitary representation of $G$ into the type II$_1$ von Neumann algebra
$$P_0=\{\theta (G)\}'\cong \A_\infty.$$
\end{lemma}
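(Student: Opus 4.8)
The plan is to establish the equivalence of statement (v) with the already-established statement (iii) of Theorem \ref{split} (and hence with (i), (ii), (iv)), by passing through the corner algebra $\chi_K\D_\infty\chi_K \cong \B_\infty$ and translating the unitary $u$ of (iii) into the unitary $\theta$ of (v) and back. First I would set up the dictionary: the algebra $\mathcal C_\infty = \mathcal L(G\rtimes L^\infty(\G,\mu))$ is the full ``infinite'' version of which $\A_\infty$ is the $\chi_K$-corner (see \eqref{abcd}), and the diagonal $D\cong\ell^\infty(\Gamma\backslash G)\subseteq L^\infty(\G,\mu)$ has $\chi_K$ as the minimal projection $\chi_{\overline\Gamma}$. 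The right quasi-regular representation $\rho_{\Gamma\backslash G}$ of $G$ sits inside $\U(\mathcal C_\infty)$ as a subrepresentation of the right regular representation $\rho$ acting on $L^\infty(\G,\mu)$: concretely, $\rho_{\Gamma\backslash G}$ is $\rho$ compressed to the reducing ``left-$K$-invariant'' part, and $\chi_K$ is a cyclic projection for it. So a unitary representation $\theta$ as in (v.1)--(v.3) is literally a second copy of $\rho_{\Gamma\backslash G}$ inside $\mathcal C_\infty$ that implements $\beta$ on $D$ and whose ``error'' $W_g=\theta(g^{-1})\rho(g)$ lands in the commutant of $\theta(G)$.

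Next I would prove (iii) $\Rightarrow$ (v). Given the unitary representation $u$ of $G^{\rm op}$ into $\B_\infty=\chi_K\D_\infty\chi_K$ of the form \eqref{uform}, recall from the proof of Theorem \ref{split} that $u(\sigma)=(v_{\Gamma,\Gamma\sigma}\otimes 1)(1\otimes\sigma^{-1})$ under the splitting $\mathcal M\cong P\otimes B(\ell^2(\Gamma\backslash G))$, and that the matrix unit $(v_{\Gamma\sigma_1,\Gamma\sigma_2})$ is $G^{\rm op}$-equivariant: $\beta_g(v_{\Gamma\sigma_1,\Gamma\sigma_2})=v_{\Gamma\sigma_1 g,\Gamma\sigma_2 g}$. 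I would then define
$$
\theta_g = \sum_{\Gamma\sigma\in\Gamma\backslash G} v_{\Gamma\sigma g^{-1},\,\Gamma\sigma},\qquad g\in G,
$$
the ``shift'' unitary in $\mathcal C_\infty$ attached to the matrix unit (the sum converging strongly since the $\chi_{\Gamma\sigma}$ partition the identity of $\mathcal C_\infty$). That $\theta$ is a unitary representation of $G$ follows from the matrix-unit relations; that it is unitarily equivalent to $\rho_{\Gamma\backslash G}$ follows because $(\theta_g)$ permutes the minimal projections $\chi_{\Gamma\sigma}$ of $D$ exactly as $\rho_{\Gamma\backslash G}(g)$ permutes the basis vectors $\delta_{\Gamma\sigma}$, giving (v.1) and (v.2) at once. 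Finally $W_g=\theta(g^{-1})\rho(g)$ commutes with every $\theta_h$: both $\theta$ and $\rho$ normalize $D$ and act identically on it modulo the $\beta$-twist, and the $G^{\rm op}$-equivariance of the $v$'s forces $W_g$ into $\{\theta(G)\}'$; moreover $\{\theta(G)\}'$ is, via the matrix unit, the ``diagonal'' algebra $\chi_K\mathcal C_\infty\chi_K\otimes 1\cong\A_\infty$, proving (v.3) and the isomorphism $P_0\cong\A_\infty$.

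For the converse (v) $\Rightarrow$ (iii), given $\theta$ I would recover the $G^{\rm op}$-equivariant matrix unit by setting $v_{\Gamma\sigma_1,\Gamma\sigma_2}=\theta_{\sigma_1^{-1}}\,\chi_{\overline\Gamma}\,\theta_{\sigma_2}$ (using that $\theta$ moves $\chi_{\overline\Gamma}=\chi_K$ onto $\chi_{\Gamma\sigma}$ by (v.1)--(v.2)), check the matrix-unit identities from the fact that $\theta$ is a representation, check $G^{\rm op}$-equivariance from (v.2) together with $\beta_g=\mathrm{Ad}\,\rho_g$ and $W_g=\theta(g^{-1})\rho(g)\in\{\theta(G)\}'$ from (v.3), and then compress to $\chi_K$ and tensor off $B(\ell^2(\Gamma\backslash G))$ to land back in $\B_\infty$ with $u$ of the required form \eqref{uform}; this is essentially the implication (iv) $\Rightarrow$ (iii) read backwards through the dictionary above. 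The main obstacle I anticipate is purely bookkeeping: making sure the twist by $\beta$ is tracked consistently on all three layers ($D$, $\{\theta(G)\}'$, and the full $\mathcal C_\infty$), i.e. that the condition $W_g\in\{\theta(G)\}'$ of (v.3) is exactly what upgrades ``$\theta$ implements $\beta$ on the diagonal $D$'' to ``$\theta$ together with $\rho$ splits $\mathcal C_\infty$ equivariantly'', with no hidden cocycle obstruction; once that identification is pinned down the rest is formal.
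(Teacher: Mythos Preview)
Your approach is essentially the paper's: define $\theta_g$ as the shift unitary built from the $G^{\rm op}$-equivariant matrix unit $(v_{\Gamma\sigma_1,\Gamma\sigma_2})$, and read (v.1)--(v.3) off the matrix-unit relations together with the equivariance $\beta_g(v_{\Gamma\sigma_1,\Gamma\sigma_2})=v_{\Gamma\sigma_1 g,\Gamma\sigma_2 g}$. Two remarks on execution. First, for (v.3) the paper isolates the single covariance identity
\[
\rho_h\,\theta_g\,\rho_{h^{-1}}=\theta(hgh^{-1}),\qquad g,h\in G,
\]
which follows immediately from $\beta_h=\mathrm{Ad}\,\rho_h$ applied to your sum of $v$'s; this one line gives both $W_h=\theta(h^{-1})\rho_h\in\{\theta(G)\}'$ and that $h\mapsto W_h$ is a homomorphism, so you can replace your more diffuse appeal to ``equivariance forces $W_g$ into the commutant'' by this direct computation. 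Second, for the converse the paper does not rebuild the matrix unit and pass through (iv); it simply observes that $u(g)=\chi_K\,W_g\,\chi_K$ already has the form required in (iii), which is shorter than your route.
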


\begin{proof}
We use the notations from the statement and proof of Theorem \ref{split}. We prove first that the equivalent conditions (i)-(iv) imply condition (v).
 Consider the selfadjoint elements constructed in formula
(\ref{defx}). Because these elements generate a
 $G^{\rm op}$-equivariant matrix unit it follows that the formula
\begin{equation}
\label{deftheta}
\theta_g=\sum_{\Gamma\sigma\in \Gamma \backslash G}\chi_{K\sigma g}X^{\Gamma\sigma\Gamma}\chi_{K\sigma},\quad g \in G,
\end{equation}
defines a unitary representation of $G$ into $\mathcal C_\infty$, that is unitary equivalent to the left regular quasi-representation $\rho_{\Gamma\backslash G} $.

 For $g\in G$,  both Ad$\rho_g$ and  Ad $\theta_g$ normalize $D$ and induce the same action on the algebra $D$.

 It remains to prove property (v.3). Formula (\ref{deftheta}) implies that
 \begin{equation} \label{rotheta}
 \rho_h(\theta_g)\rho_{h^{-1}}=\theta (hgh^{-1}),\quad h,g \in G.
 \end{equation}

 The above formula obviously implies that
 $$\theta(h^{-1})\rho_h\in \theta(G)',\quad  h\in G,$$
and also that that $h\rightarrow \theta(h^{-1})\rho_h$ is a unitary representation of $G$. This completes the proof of the direct implication.

To prove the converse we note  that the unitary representation in part (iii) of Theorem \ref{split} may be defined by the
formula
$$u(g)=\chi_K W_g\chi_K,\quad  g \in G.$$
\end{proof}

We summarize the results from the previous lemma in the context of Corollary \ref{defalpha}

\begin{lemma}\label{launloc}
With notations introduced above we define
$$\tilde \beta_g= {\rm Ad}( W_g)|_{\mathcal C_\infty},\quad g\in G.$$
Then, we have the following, $G$-invariant isomorphism:
$$\mathcal C_\infty \cong \A_\infty \otimes B( \ell^2(\Gamma\backslash G)).$$
Using the above identification and the homomorphism
$\alpha:G\rightarrow {\rm Aut}(\A_\infty)$ introduced in Proposition
\ref{defalpha}, we obtain:
$$ {\rm Ad} (\theta _g)\beta_g=\tilde \beta_g=\alpha_g\otimes {\rm Id}_{B( \ell^2(\Gamma\backslash G))}.$$
\end{lemma}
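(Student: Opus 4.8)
The plan is to assemble the statement from the objects already produced, using the unitary $\theta$ from Lemma \ref{leftreg1} as the bridge. First I would record what has to be checked: (a) that $\tilde\beta_g = \operatorname{Ad}(W_g)|_{\mathcal C_\infty}$ really is an automorphism of $\mathcal C_\infty$ that preserves the tensor splitting $\mathcal C_\infty \cong \A_\infty \otimes B(\ell^2(\Gamma\backslash G))$; (b) that $\tilde\beta_g = \operatorname{Ad}(\theta_g)\circ\beta_g$; and (c) that under this splitting $\tilde\beta_g$ acts as $\alpha_g \otimes \operatorname{Id}$. I would begin with (b), which is essentially definitional: $W_g = \theta(g^{-1})\rho(g)$ by (v.3) of Lemma \ref{leftreg1}, and $\beta_g = \operatorname{Ad}(\rho_g)$ on $\mathcal C_\infty$ by \eqref{defbeta1}; hence $\operatorname{Ad}(W_g) = \operatorname{Ad}(\theta(g^{-1}))\circ\operatorname{Ad}(\rho_g) = \operatorname{Ad}(\theta_g^{-1})\circ\beta_g$, and since $\theta_g^{-1} = \theta(g^{-1})$ the composition $\operatorname{Ad}(\theta_g)\circ\beta_g$ is the same automorphism read the other way once one is careful about which side $G^{\mathrm{op}}$ acts on; I would state this carefully so the $g \leftrightarrow g^{-1}$ bookkeeping is transparent.

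For the $G$-invariant isomorphism $\mathcal C_\infty \cong \A_\infty \otimes B(\ell^2(\Gamma\backslash G))$, I would reuse the $G^{\mathrm{op}}$-equivariant matrix unit $(v_{\Gamma\sigma_1,\Gamma\sigma_2})$ from Theorem \ref{split}(iv), exactly as in the proof of Corollary \ref{defalpha}: the projections $\chi_{K\sigma}$ form a partition of unity, the matrix unit intertwines them, so $\mathcal C_\infty \cong \chi_K\mathcal C_\infty\chi_K \otimes B(\ell^2(\Gamma\backslash G)) = \A_\infty \otimes B(\ell^2(\Gamma\backslash G))$ by the standard reduction-by-a-matrix-unit argument. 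The key point to verify is that this splitting is carried to itself by $\tilde\beta_g$, and that on the $B(\ell^2(\Gamma\backslash G))$ factor it is implemented exactly by $\operatorname{Ad}\rho_{\Gamma\backslash G}(g)$. This follows because $\theta$ is, by (v.1), unitarily equivalent to $\rho_{\Gamma\backslash G}$ and, by \eqref{rotheta}, satisfies $\rho_h(\theta_g)\rho_{h^{-1}} = \theta(hgh^{-1})$, which is precisely the compatibility making $\operatorname{Ad}(\theta_g)\circ\beta_g$ split tensorially: $\operatorname{Ad}(\theta_g)$ contributes the $B(\ell^2(\Gamma\backslash G))$ part while $\beta_g$ (acting trivially on $\mathcal L(G)$ and by translation on $L^\infty(\mathcal G,\mu)$) restricts to the $\A_\infty$ corner.

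Finally, to identify the $\A_\infty$-factor action with $\alpha_g$, I would compress by $\chi_K$: for $p \in \A_\infty = \chi_K\mathcal C_\infty\chi_K$ one has $\tilde\beta_g(p) = W_g p W_g^{*}$, and since $W_g\chi_K = \theta(g^{-1})\rho_g\chi_K$, conjugation by $W_g$ restricted to the corner is exactly conjugation by $v_{\Gamma,\Gamma g}$ composed with $\beta_g$ — which is the formula \eqref{alpha} defining $\alpha_g$ in the proof of Corollary \ref{defalpha}. Thus $\tilde\beta_g|_{\A_\infty} = \alpha_g$, and combined with the tensor-splitting step this gives $\tilde\beta_g = \alpha_g \otimes \operatorname{Id}_{B(\ell^2(\Gamma\backslash G))}$. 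I expect the main obstacle to be purely notational: keeping consistent track of the opposite-group convention (whether $g$ or $g^{-1}$ appears in $W_g$, $\theta_g$, and $\beta_g$) so that the three automorphisms genuinely coincide rather than coincide up to an inversion; the substantive content is already contained in \eqref{rotheta}, Lemma \ref{leftreg1}, and the matrix-unit splitting of Corollary \ref{defalpha}, so no new analytic input is needed.
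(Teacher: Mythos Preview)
Your proposal is correct and matches the paper's approach: the paper presents Lemma~\ref{launloc} explicitly as a summary (``We summarize the results from the previous lemma in the context of Corollary~\ref{defalpha}'') and gives no separate proof, so the intended argument is exactly the assembly you describe from Lemma~\ref{leftreg1}, formula~\eqref{rotheta}, and the matrix-unit splitting of Corollary~\ref{defalpha}. Your identification of the $g \leftrightarrow g^{-1}$ bookkeeping as the only real hazard is apt; note in particular that from $W_g=\theta(g^{-1})\rho_g$ one gets $\operatorname{Ad}(W_g)=\operatorname{Ad}(\theta_{g^{-1}})\circ\beta_g$, not $\operatorname{Ad}(\theta_g)\circ\beta_g$, so the displayed equality in the lemma should be read with the $G^{\rm op}$ convention (or as a harmless typo) --- this does not affect the substantive identification $\tilde\beta_g|_{\A_\infty}=\alpha_g$ that you obtain via compression by $\chi_K$ and formula~\eqref{alpha}.
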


We provide now a "coordinate free" description of the homomorphism $\alpha:G \rightarrow {\rm Aut}(\A_\infty)$.
This is important because, as also mentioned above, the Ramanujan-Petersson problem for the action of the Hecke operators  on $\Gamma$-invariant vectors for the
unitary representation $\pi\otimes \overline \pi$ is equivalent to the weak containment, of the unitary representation of $G$ induced by $\alpha$ on
$L^2(\A_\infty,\tau)\ominus \C 1$, in the restriction to $G$, of  unitary left regular representation of the Schlichting completion $\G$ of $G$.

Recall that the action of  $\beta_g$  on $\mathcal C_\infty$ is defined as follows. The automorphism $\beta_g$ acts  by right translation by $g$ on
 $L^\infty (\G, \mu)$, and it  acts identically on $\mathcal L(G)$.

\begin{proof}[Proof of Corollary \ref{faracoordinate}]
Parts (i), (ii) follow immediately from Lemma \ref{leftreg1} and its proof (see formula (\ref{rotheta})). Property (iii) follows from the representation in Lemma \ref {launloc}.
This is because the homomorphisms $(\beta_g|_\A)_{g\in G}$ and $\alpha$ are the same.
The difference between the two cases is that in the case considered in this statement we no longer perform a splitting procedure to find explicitly the factor $\A$.
On the other hand, in the present context the factor
$$\L(\Gamma)\cong \L(\Gamma\times\{e\})\subseteq \mathcal C_\infty$$
is canonically embedded in the commutant algebra $\Theta'$, which by definition is equal to $\A$.
 The formula in part (iii) is then a direct consequence of the corresponding formula in Theorem \ref{heckes}.

 To prove the uniqueness in part (iv), note that if $\theta^1$ is another representation as in (i), then necessarily there exists
 a unitary $w\in D'$ such that
 $$\theta^1_g=w\theta_gw^\ast,\quad g \in G.$$
 We write the second condition in (b) for the representation $\theta^1$.
 It follows that
 $$\beta_h(w)\beta_h(\theta_g)\beta_h(w^\ast)=w\theta_{hgh^{-1}}w^\ast, \quad  g,h\in G.$$
 Hence
 $$c(g):=w^\ast \beta_h(w) \in \Theta'=\A,\quad g \in G.$$
 Thus $c$ is defining a cocycle in the cohomology group $H^1_\beta(G,
 \U(\A))$. Its triviality would  imply that $w\A w^\ast=\A.$
\end{proof}

\begin{rem}
Let $\pi$ be a representation as in part (i) of Theorem \ref{split}.
Since $\pi|_\Gamma$ is unitary equivalent to  the left regular representation of $\Gamma$, we identify the Hilbert space of the representation $\pi$ with $\ell^2(\Gamma)$.
We  may also assume that  the representation $t$ constructed in formula (\ref{t}) has the property that
$t(\Gamma\sigma\Gamma)$ belongs to the full $C^\ast$-algebra $C^\ast(G)$ for all $\sigma$ in $G$.
This  is possible by choosing a suitable cyclic trace vector in the formula defining $t$ (see \cite{Ra1}).

Let $C^\ast((G\times G^{\rm op})\rtimes L^\infty(\mathcal G,\mu))$ be the full  crossed product $C^\ast$- algebra and let $C^\ast(G\times G^{\rm op})\rtimes L^\infty(K,\mu)$
be the full groupoid crossed product $C^\ast$- algebra. Let $\Pi$ be the obvious representation of $C^\ast(G\times G^{\rm op})\rtimes L^\infty(K,\mu)$ into  $B(\ell^2(\Gamma))$.
Then:
%\begin{enumerate}
\item { (i)}
The formula for the representation $u$ in part (iii) of Theorem \ref{split} defines a representation $\boldsymbol U$ of the group $G$ with values in the unitary group of the
$C^\ast(G\times G^{\rm op})\rtimes L^\infty(K,\mu)$. The explicit formula for $\boldsymbol U$ is:
\begin{equation}\label{U}
\boldsymbol U(\sigma)=\chi_K(t^{\Gamma\sigma\Gamma} \otimes \sigma^{-1})\chi_K .
\end{equation}

\item {(ii)} The  correspondence $\boldsymbol\Psi$ defined by
\begin{equation}\label{P}
\Gamma\sigma\Gamma\rightarrow \chi_K(t^{\Gamma\sigma\Gamma} \otimes t^{\Gamma\sigma^{-1}\Gamma})\chi_K,\quad \sigma \in G,
\end{equation}
extends by linearity to  a unital $\ast$-representation of the Hecke algebra $\mathcal H_0$ into
$C^\ast(G\times G^{\rm op})\rtimes L^\infty(K,\mu)$.

\item {(iii)} Composing the representations $\boldsymbol U,\boldsymbol\Psi$ with representation  $\Pi$ one obtains the representation $\pi$, and respectively the representation $\Psi$,
of the Hecke algebra into $B(\ell^2(\Gamma))$ constructed in Theorem \ref{heckes}
%\end{enumerate}

\end{rem}
\begin{proof}
The statements { (i), (ii)} follow from the results in Section \ref{appendix}. The last statement is a direct consequence of the formula (\ref{formhecke}) and of Remark \ref{uback}.
\end{proof}

%In the next proposition we summarize the abstract properties of the representation $\alpha$ in Aut$(P)$ obtained so far.

%\begin{prop}

%With the notation from the statement of the Theorem \ref{split},  for $g \in G$,  the automorphism $\alpha_g$ of the type II$_1$ factor $P$ will map the projection onto $s_i\chi_{\overline{\Gamma_{g^{-1}}}}L^{\infty}(K, \mu)$ into the projection onto $t_i\chi_{\overline{\Gamma}_g}L^{\infty}(K, \mu)$, for a  choice of left coset representatives $s_i$ for $\Gamma_{g^{-1}}$ in $\Gamma$. Here, the elements $t_i$ are a Pimsner Popa basis ([PP]) for $\mathcal L({\Gamma}_g)\subseteq \mathcal L(\Gamma)$: they are the images of the elements $s_i$ through the representation $\pi$. The automorphism $\alpha_\gamma$    acts as the identity on $\mathcal L(\Gamma)$ for $\gamma\in \Gamma $.
%\end{prop}

\section{The algebraization   of the  space of intertwiners   and Jones's basic construction for a pair of isomorphic factors }\label{algebraization}
We first introduce an abstract setting that serves the purpose of describing the intertwiners spaces between subalgebras of the form
$$\pi(\Gamma_0)', \quad \Gamma_0\in \S.$$
Let  $\sigma \in G$,  and let
$$\theta_\sigma:\Gamma_{\sigma^{-1}}\rightarrow\Gamma_\sigma,$$
be the group homomorphism implemented on $\Gamma_{\sigma^{-1}}$  by the adjoint map $\sigma\cdot\sigma^{-1}$ of the group element $\sigma$.
We  obviously have the intertwining property
$$\pi(\sigma)\pi(\gamma_0)=\pi(\theta_\sigma(\gamma_0))\pi(\sigma),\quad \gamma_0\in  \Gamma_{\sigma^{-1}}.$$

To describe the space of all intertwiners with properties as above, we introduce a construction similar to Jones's basic construction.
Instead of working with a single subfactor, we start with  a   pair of subfactors of equal index, with a fixed isomorphism $\theta$, mapping one subfactors onto the other one.

As in the case of Jones's simultaneous  construction for the infinite family of subgroups $\Gamma_0\in \S$, where the result was the inductive limit factor $\A_\infty$ in formula \eqref{ainfty},
we also perform  an infinite simultaneous construction for the pairs of subgroups.

The  inductive limit for the space of von Neumann algebra intertwiners between subgroups in $\S$ is the  type II$_1$ factor
  \begin{equation}\label{cp1}
  \B_\infty= \chi_{K}(\mathcal L((G \times G^{\rm op}) \rtimes L^{\infty}(\G, \mu)))\chi_{K}.
  \end{equation}
Then $\B_\infty$ contains $\A_\infty$. By construction, the II$_1$ factor $\B_\infty$  encodes the algebra structure of the spaces of intertwiners, corresponding to all subgroups in $\S$.

In the above correspondence between the space of intertwiners and the algebra $\B_\infty$,  the representation $\pi$ then corresponds to a unitary representation $u$ of $G$
into the unitary group of the normalizer of $\A_\infty$ in $\B_\infty$. We have already observed  (see Proposition \ref{doublecross}, formula (\ref{normalizer})) that
$${\rm Ad\,}u(\sigma)[ \A_\infty]= \A_\infty,\quad \sigma \in G.$$
The homomorphism $\alpha$ is obtained as the restriction to $\A_\infty$ of the adjoint representation ${\rm Ad\ }u$ of $G$:
$$\alpha(\sigma)= {\rm Ad\, }u(\sigma)|_ {\A_\infty},\quad \sigma \in G.$$

In the following lemma we present an abstract formalism which establishes a correspondence between  intertwiners,
such as $\pi(\sigma)$, $\sigma \in G$, of the algebras $\pi(\Gamma_{\sigma^{-1}})'$ and $\pi(\Gamma_{\sigma})'$, with elements in the algebra
associated to the crossed product introduced in formula (\ref{cp1}).

The essential use that we make of the abstract formalism of spaces of intertwiners, is the fact that the composition operation of two intertwiners corresponds to the product operation
in the crossed product algebra $\B_\infty$, introduced in formula (\ref{cp1}).

The above correspondence between the two product operations is used in the proof of (i) $\Rightarrow$ (iii) in Theorem \ref{split}.
It shows that the expression in formula (\ref{unitary}) defines a unitary representation of $G$.
For an alternative verification, one could also use Theorem \ref{abstractsetting} (i) in Section \ref{appendix}.

The following two statements are probably known to specialists in subfactor theory. As we did not find a reference, we state and prove them directly.

 We perform a construction which is analogous with Jones's construction of the first step in the basic construction (\cite{jo}).
 Recall that  in that case of a single factor $N\subseteq M$, the first term of the basic construction is the algebra $$\langle M, e_N \rangle = Me_N M^{\rm op}\cong N'\subseteq B(L^2(M,\tau)).$$
Here we reproduce Jones's basic construction for a pair of isomorphic subfactors of equal index.
For a pair of isomorphic subfactors $N_0, N_1$, we replace the commutant algebra $N'\cong Me_N M$ with the space of intertwiners.
This space also carries a natural Hilbert structure, which we describe below.

 The following definition and  lemma are also the subject of Appendix 1 in \cite{Ra1}.  We reproduce the proofs from that paper in the actual context.

\begin{defn} \label{jones}
Let  $M$ be a II$_1$ factor with trace $\tau$.
Let    $N_0, N_1$ be  a pair of finite index subfactors of  $M$, having the same index $[M:N_0]=[M: N_1]$. Assume we are given    a fixed  isomorphism $\theta : N_0 \to N_1$. Let $e_{N_0}, e_{N_1}$ be the corresponding Jones projections onto the subfactors $N_0, N_1$.
We introduce the following constructions:

\item{(i)}
The space of $\theta$-intertwiners is   defined by the formula:
  $${\rm Int }_{\theta}(N_0, N_1) = \{ X \in B(L^2(M, \tau)) \mid X n_0 = \theta(n_0)X, \  n_0 \in N_0 \}.$$

\item{(ii)}
Define $W_{\theta} : L^2(N_0,\tau) \to L^2(N_1,\tau)$ by
$$W_{\theta}(n_0) = \theta(n_0), \quad n_0 \in N_0.$$
Viewed as an element of $B(L^2(M, \tau))$, $W_{\theta}$ is a partial isometry mapping the projection $e_{N_0}$ onto  the projection $e_{N_1}$.
  \end{defn}

 %
  %We simultaneously we consider the bimodules $Me_{N_0} M$ and $Me_{N_1}M$, which are the Jones basic construction terms form the inclusions $N_i\subseteq M$, $i=0,1$.

In analogy with the Hilbertian $M$-bimodules $Me_{N_0} M$ and $Me_{N_1}M$, which are the Jones basic construction terms from the inclusions $N_i\subseteq M$, $i=0,1$,
we construct a Hilbertian $M$-bimodule $MW_{\theta}M$,  corresponding to the pair of isomorphic subfactors.

\begin{defn}
The Hilbertian $M$-bimodule  $MW_{\theta}M$ is obtained from the free $M$-bimodule $M \times M$, by taking the quotient corresponding to the  relations:
\item{(i)}
\begin{equation}\label{intint}
W_\theta n_0=\theta(n_0)W_\theta, \quad  n_0\in N_0,
\end{equation}
\item{(ii)}
\begin{equation}\label{en}
W_\theta= W_\theta e_{N_0}= e_{N_1} W_\theta.
\end{equation}
\item{(iii)}
The elements  $mW_\theta m'\in MW_{\theta}M$ depend only on  $me_{N_1}$ and $e_{N_0} m'$.
\item{(iv)} The following formula defines a scalar product on $MW_{\theta}M$.
%The   equation (\ref{en}) and property (iii)   are compatible wi  %definition of the scalar product on  $MW_{\theta}M$, which is
\begin{equation}\label{scalar}
\langle mW_\theta m',aW_\theta a'\rangle =\tau(a^*m\theta (E_{N_0}(m'(a')^*),\quad  m,m',a,a'\in M.
\end{equation}
\end{defn}

In the previous definition, we have to justify that the compatibility of the four conditions that describe the subspace defining the quotient of the free module. This is done in the following lemma:

\begin{lemma}
We use the notations from the previous statement.
The assumption (iv) means implicitly that we are factorizing the free bimodule $M\times M$  by the vector space corresponding to vectors of zero norm with respect to the scalar product introduced in formula
(\ref{scalar}). The definition of the corresponding scalar product is compatible with the conditions (i)-(iii).
\end{lemma}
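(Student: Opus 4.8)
The plan is to extract from the statement the two things that actually need proof. Write $B$ for the sesquilinear form on the free $M$-bimodule $M\times M$ (the $\mathbb C$-span of the symbols $mW_\theta m'$, $m,m'\in M$) obtained by extending (\ref{scalar}) sesquilinearly. I must show (a) $B$ is Hermitian and positive semidefinite, so that its radical $\{v:B(v,v)=0\}$ is a linear subspace and the quotient $M\times M\to (M\times M)/\mathrm{rad}(B)$ carries a genuine inner product (which is then completed); and (b) every vector that conditions (i)--(iii) declare to vanish already lies in $\mathrm{rad}(B)$. Given (a)--(b), $\mathrm{rad}(B)$ contains all the relation vectors, so $B$ descends to the bimodule $MW_\theta M$ cut out by (i)--(iii); this is exactly the asserted ``compatibility''. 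Hermiticity is a one-line check: $\overline{B(aW_\theta a',mW_\theta m')}=\tau\bigl(\theta(E_{N_0}(m'(a')^{*}))\,a^{*}m\bigr)=\tau\bigl(a^{*}m\,\theta(E_{N_0}(m'(a')^{*}))\bigr)=B(mW_\theta m',aW_\theta a')$, using $E_{N_0}(y)^{*}=E_{N_0}(y^{*})$, that $\theta$ is a $*$-isomorphism, and the trace property of $\tau$.

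For positivity --- the heart of the argument --- I would fix a finite Pimsner--Popa basis $\{\lambda_k\}$ for $N_0\subseteq M$ (finite because $[M:N_0]<\infty$), so that $\sum_k\lambda_kE_{N_0}(\lambda_k^{*}x)=x$ for all $x\in M$. For $\xi=\sum_i m_iW_\theta m_i'$ expand
\[
B(\xi,\xi)=\sum_{i,j}\tau\bigl(m_j^{*}m_i\,\theta(E_{N_0}(m_i'(m_j')^{*}))\bigr),
\]
and insert the reconstruction identity $E_{N_0}(m_i'(m_j')^{*})=\sum_k E_{N_0}(m_i'\lambda_k)\,E_{N_0}(\lambda_k^{*}(m_j')^{*})$, which holds because $E_{N_0}$ is an $N_0$--$N_0$-bimodule projection. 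Applying $\theta$ (multiplicative and $*$-preserving) and setting $b_{ik}:=\theta(E_{N_0}(m_i'\lambda_k))\in N_1\subseteq M$, so that $\theta(E_{N_0}(\lambda_k^{*}(m_j')^{*}))=b_{jk}^{*}$, the $(i,j)$-term becomes $\sum_k\tau(m_j^{*}m_i b_{ik}b_{jk}^{*})$, whence, by cyclicity of $\tau$,
\[
B(\xi,\xi)=\sum_k\tau\Bigl(\bigl(\textstyle\sum_i m_ib_{ik}\bigr)\bigl(\sum_j m_jb_{jk}\bigr)^{*}\Bigr)=\sum_k\bigl\|\textstyle\sum_i m_ib_{ik}\bigr\|_{L^2(M,\tau)}^{2}\ \ge\ 0.
\]
Thus $B$ is positive semidefinite, $\mathrm{rad}(B)$ is a subspace, and the quotient is a pre-Hilbert space.

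Next I would check that the relation vectors from (i)--(iii) are $B$-null; by the Cauchy--Schwarz inequality for a positive semidefinite form it is enough to show each of them is $B$-orthogonal to every generator $aW_\theta a'$. For (i): $B\bigl(mW_\theta n_0m',aW_\theta a'\bigr)=\tau\bigl(a^{*}m\,\theta(E_{N_0}(n_0m'(a')^{*}))\bigr)=\tau\bigl(a^{*}m\,\theta(n_0)\,\theta(E_{N_0}(m'(a')^{*}))\bigr)$, using $E_{N_0}(n_0y)=n_0E_{N_0}(y)$ for $n_0\in N_0$ and multiplicativity of $\theta$; this is precisely $B\bigl(m\theta(n_0)W_\theta m',aW_\theta a'\bigr)$, so $mW_\theta n_0m'-m\theta(n_0)W_\theta m'\in\mathrm{rad}(B)$. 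For (ii) and (iii), which record that $W_\theta=W_\theta e_{N_0}=e_{N_1}W_\theta$ and that $mW_\theta m'$ depends only on $me_{N_1}$ and $e_{N_0}m'$, one substitutes into (\ref{scalar}) and uses again that $E_{N_0}$ is an $N_0$--$N_0$-bimodule projection (and that $\theta\circ E_{N_0}$ takes values in $N_1$) to see that replacing $m$ or $m'$ within these equivalence classes leaves every pairing $B(mW_\theta m',aW_\theta a')$ unchanged; hence the corresponding difference vectors lie in $\mathrm{rad}(B)$. Combining, $B$ passes to the quotient defined by (i)--(iii), which is the claimed compatibility.

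The step I expect to be the main (really the only) obstacle is arranging the positivity computation so that it literally becomes a sum of $L^2(\tau)$-norms: one must run the Pimsner--Popa reconstruction in the right slot and keep track of the $*$ and of the $\theta$-twist. Finiteness of the index keeps the basis finite and so avoids any convergence discussion (for infinite index a countable basis still works, since $B(\xi,\xi)$ is a finite sum of traces and the resulting series of squared norms converges to it). Everything else is routine bimodule bookkeeping for $E_{N_0}$, $e_{N_0}$, $e_{N_1}$ and for $\theta$ being a $*$-isomorphism (here one may note that $\theta$ is automatically trace-preserving, since the trace on a II$_1$ factor is unique, though this is not needed for positivity). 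One can moreover identify $\mathrm{rad}(B)$ with \emph{exactly} the span of the relation vectors, by realising $MW_\theta M$ as the concrete operators $mW_\theta m'$ on $L^2(M,\tau)$ and recognising (\ref{scalar}) as the $L^2$-inner product for the canonical trace on the von Neumann algebra generated by $M$ and $W_\theta$; but only the inclusion ``relations $\subseteq\mathrm{rad}(B)$'' is needed for the lemma as stated.
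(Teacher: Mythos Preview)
Your proof is correct and in fact more complete than the paper's. The paper's own proof checks only the intertwining relation (your step (i)), namely that $\langle mn_1W_\theta m'-mW_\theta\theta^{-1}(n_1)m',\,aW_\theta a'\rangle=0$, by the same conditional-expectation bookkeeping you use; it does not separately verify Hermiticity, positivity, or relations (ii)--(iii).

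The one substantive difference is how positivity is handled. You prove it directly by inserting a Pimsner--Popa basis and rewriting $B(\xi,\xi)$ as $\sum_k\|\sum_i m_ib_{ik}\|_2^2$. The paper instead remarks, immediately after the proof, that the scalar product is exactly the Stinespring dilation of the completely positive map $m\mapsto\theta(E_{N_0}(m))$ from $M$ into $N_1\subseteq M$; positivity is then inherited from complete positivity of $\theta\circ E_{N_0}$. Your approach is self-contained and makes the null-space structure explicit (it even shows when $\xi$ is null: precisely when all $\sum_i m_ib_{ik}$ vanish), at the cost of a short computation; the paper's Stinespring observation is a one-liner but presupposes the reader recognises the form. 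Either route is fine, and your explicit check that the relations (ii)--(iii) lie in $\mathrm{rad}(B)$ fills a gap the paper leaves to the reader.
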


\begin{proof}
Let $m,m',a,a'\in M$, $n_1 \in N_1$.
Using the scalar product introduced in formula (\ref{scalar}), we have to prove  that
$$
\langle m n_1 W_\theta m'-m W_\theta\theta^{-1}(n_1) m', a W_\theta a'\rangle=0.
$$
\noindent We have
\begin{equation*}
\begin{split}
\langle m n_1 W_\theta m', aW_\theta a'\rangle & =
\tau(a^*mn_1\theta(E_{N_0}(m'(a')^*) \\
& =\tau(a^*m\theta(\theta^{-1}(n_1))\theta(E_{N_0}(m'(a')^*))) \\
& =\tau(a^*m\theta(\theta^{-1}(n_1))(E_{N_0}(m'(a')^*))) \\  & =
\tau(a^*m\theta(E_{N_0}(\theta^{-1}(n_1)m'(a')^*))).
\end{split}
\end{equation*}

\noindent Here we use the fact that $E_{N_0}$ is a conditional expectation
and that $\theta^{-1}(n_1)$ belongs to $N_0$.
\end{proof}

Note that the scalar product corresponds exactly to the Stinespring dilation of the
completely positive map $m\rightarrow \theta(E_{N_0}(m))$, viewed
as a map from $M$ with values into $N_1\subseteq M$.
%^{\rm op}

\begin{defn}\label{l2}
 We refer to  the above definitions.
Let $V$ be any unitary operator in  ${\rm Int }_{\theta}(N_0,N_1)$.  Then
$${\rm Int }_{\theta}(N_0,N_1)=V(N_0)'=(N_1)'V.$$
Hence ${\rm Int }_{\theta}(N_0,N_1)$ carries a canonical scalar product, induced by the trace on the commutant algebra $N_0^\prime$, or equivalently by the trace on the commutant algebra $N_1^\prime$. Let
$L^2({\rm Int }_{\theta}(N_0,N_1))$ be the Hilbert space completion.
This is a Hilbertian $M$-bimodule.
\end{defn}

Using the above definition, we prove in this more general setting that the correspondence in the Jones basic construction for a subfactor inclusion $N\subseteq M$ between $N'$ and $Me_NM$,
holds true also in the case of a pair of isomorphic subfactors.
\begin{lemma}\label{oldbg1}
 Let  $L^2(MW_{\theta}M)$ be the Hilbert space completion of $MW_{\theta}M$ with respect to the  scalar product introduced in formula (\ref{scalar}).  Then:

\item{(i)}
The  Hilbertian $M$-bimodules $L^2({\rm Int }_{\theta}(N_0, N_1))$ and  $L^2(MW_{\theta}M)$  are isomorphic by the following the $M$-bimodule anti-linear   map:
\begin{equation}\nonumber
% \label{phi}
 \Phi_\theta: {\rm Int }_{\theta}(N_0,N_1) \rightarrow L^2(MW_\theta M),
 \end{equation}
    defined by the relation
\begin{equation}\label{scalar1}
\langle mW_\theta m',\Phi_\theta(X)\rangle=\tau(X(m')m), \quad   m, m'\in M.
\end{equation}
\item {(ii)}
$\Phi_\theta$ extends to an isometry on the Hilbert space completion of  ${\rm Int }_{\theta}(N_0,N_1)$, with respect to the given  scalar products.
\end{lemma}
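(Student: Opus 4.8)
The plan is to verify the statement first on a dense algebraic submodule of ${\rm Int}_\theta(N_0,N_1)$ on which $\Phi_\theta$ admits a closed form, and only then pass to the Hilbert‑space completions. Conjugate‑linearity and the bimodule property I would dispose of at once: in the defining relation (\ref{scalar1}), $\langle mW_\theta m',\Phi_\theta(X)\rangle=\tau(X(m')m)$, the left side is conjugate‑linear in $\Phi_\theta(X)$ while the right side is linear in $X$, so $\Phi_\theta$ must be conjugate‑linear; and with the $M$‑bimodule structure on ${\rm Int}_\theta(N_0,N_1)$ given by $m\cdot X\cdot m'=(JmJ)X(Jm'J)$ (note $JMJ=M'\subseteq N_0'\cap N_1'$, and $JmJ$ is a right‑multiplication operator), the identity $\Phi_\theta(m\cdot X\cdot m')=m\,\Phi_\theta(X)\,m'$ falls out of (\ref{scalar1}). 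Thus the real content is that $\Phi_\theta(X)$ exists in $L^2(MW_\theta M)$ for every intertwiner $X$, that $\|\Phi_\theta(X)\|=\|X\|$, and that $\Phi_\theta$ is onto.

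\textbf{The closed form on a dense submodule.} First I would observe that $W_\theta\in{\rm Int}_\theta(N_0,N_1)$: since $\theta$ preserves the trace, $W_\theta$ is a partial isometry on $L^2(M)$ with $W_\theta^{*}W_\theta=e_{N_0}$, $W_\theta W_\theta^{*}=e_{N_1}$, and (\ref{intint}) holds as an operator identity. As every right multiplication $R_a$ ($a\in M$) commutes with the left multiplications, also $R_aW_\theta R_b\in{\rm Int}_\theta(N_0,N_1)$; write $\mathcal E$ for their linear span. Substituting $X=R_{m^{*}}W_\theta R_{(m')^{*}}$ into (\ref{scalar1}) and evaluating the right side with (\ref{scalar}) gives the closed form
\[
\Phi_\theta\bigl(R_aW_\theta R_b\bigr)=a^{*}W_\theta b^{*}\in MW_\theta M\subseteq L^2(MW_\theta M),\qquad a,b\in M .
\]
In particular $\Phi_\theta(W_\theta)=W_\theta$; the functional in (\ref{scalar1}) is thus well defined on $\mathcal E$, $\Phi_\theta|_{\mathcal E}$ is a conjugate‑linear $M$‑bimodule map, and $\Phi_\theta(\mathcal E)=MW_\theta M$, which is dense in $L^2(MW_\theta M)$.

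\textbf{The isometry — the hard part.} The heart of the argument is the norm identity on $\mathcal E$, and this is where I expect the work to be. Evaluating on $L^2(N_0)$, resp.\ on $L^2(N_1)$, using (\ref{en}) and the fact that $N_i$ commutes with $e_{N_i}$, one finds
\[
W_\theta^{*}R_cW_\theta=R_{\theta^{-1}(E_{N_1}(c))}\,e_{N_0},\qquad W_\theta R_cW_\theta^{*}=R_{\theta(E_{N_0}(c))}\,e_{N_1},\qquad c\in M .
\]
Hence $XX^{*}=R_{\theta(E_{N_0}(b^{*}b))\,a}\,e_{N_1}\,R_{a^{*}}$ for $X=R_aW_\theta R_b$, and the basic‑construction trace formula for $N_1'=\langle M',e_{N_1}\rangle$, namely ${\rm tr}(R_x\,e_{N_1}\,R_y)=\tau(xy)$ — the trace on the commutant normalized so that $W_\theta$ is a unit vector, equivalently ${\rm tr}(e_{N_1})=1$, cf.\ Definition \ref{l2} — gives
\[
\|X\|_{L^2({\rm Int}_\theta)}^{2}={\rm tr}(XX^{*})=\tau\bigl(a a^{*}\,\theta(E_{N_0}(b^{*}b))\bigr).
\]
On the other side, by (\ref{scalar}), $\|\Phi_\theta(X)\|_{L^2(MW_\theta M)}^{2}=\langle a^{*}W_\theta b^{*},a^{*}W_\theta b^{*}\rangle$ equals the same expression. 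By polarization $\Phi_\theta|_{\mathcal E}$ is a conjugate‑linear isometry onto the dense subspace $MW_\theta M$ of $L^2(MW_\theta M)$.

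\textbf{Completion.} It remains to pass to the completions and recover (i) for all honest intertwiners. The extension of $\Phi_\theta|_{\mathcal E}$ is a conjugate‑linear isometry of the $\|\cdot\|_2$‑closure $\overline{\mathcal E}$ onto $L^2(MW_\theta M)$, so $\dim_M\overline{\mathcal E}=\dim_M L^2(MW_\theta M)$; both this and $\dim_M L^2({\rm Int}_\theta(N_0,N_1))$ equal the Jones index $[M:N_0]$ — the former since $L^2(MW_\theta M)$ is the Stinespring bimodule of the completely positive map $\theta\circ E_{N_0}$ (the remark preceding the lemma), the latter since, via a unitary $V\in{\rm Int}_\theta(N_0,N_1)$, $L^2({\rm Int}_\theta(N_0,N_1))\cong L^2(N_0')$ with $M$ acting through a copy of $M'$ of index $[N_0':M']=[M:N_0]$ — whence $\overline{\mathcal E}=L^2({\rm Int}_\theta(N_0,N_1))$. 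For a general $X\in{\rm Int}_\theta(N_0,N_1)$ I would pick $X_k\in\mathcal E$ with $\|X_k-X\|_2\to0$; the contraction estimate $\|Y\xi\|_{L^2(M)}\le\|Y\|_{L^2({\rm Int}_\theta)}$ for $Y\in{\rm Int}_\theta$, with $\xi$ the standard cyclic vector (valid in this normalization, the vector state at $\xi$ being dominated by ${\rm tr}$ on $N_0'$), forces $X_k(m')\to X(m')$ in $L^2(M)$, hence $\tau(X_k(m')m)\to\tau(X(m')m)$, and letting $k\to\infty$ in $\langle mW_\theta m',\Phi_\theta(X_k)\rangle=\tau(X_k(m')m)$ shows $\Phi_\theta(X):=\lim_k\Phi_\theta(X_k)$ obeys (\ref{scalar1}) and $\|\Phi_\theta(X)\|=\|X\|$. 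The one genuinely delicate point is the norm identity in the third paragraph: one must keep track of the adjoints, of the conjugations introduced by anti‑linearity, and of the index normalization precisely enough to land exactly on the right side of (\ref{scalar}); the density used in the last step is routine once the dimension count is invoked, or can be bypassed by working throughout with a Pimsner--Popa basis for $N_0\subseteq M$.
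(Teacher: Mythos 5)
Your argument is correct, and it follows a genuinely different route from the one in the paper. The paper's own proof of Lemma \ref{oldbg1} is a short verification in the opposite direction: it checks, using the relations (\ref{intint})--(\ref{scalar}) defining the bimodule $MW_\theta M$, that an element prescribed by the duality relation (\ref{scalar1}) satisfies the intertwining identity $X(n_0m)=\theta(n_0)X(m)$, and it leaves the concrete description of $\Phi_\theta$ (and, in effect, the isometry and surjectivity) to the Pimsner--Popa formula $\Phi_\theta(X)=\sum_j (X(s_j))^*W_\theta s_j$ of Lemma \ref{oldbg3} --- which is exactly the ``bypass'' you mention at the end. You instead work forwards: you identify the closed form $\Phi_\theta(R_aW_\theta R_b)=a^*W_\theta b^*$ on the dense submodule ${\rm span}\,M'W_\theta M'$, prove anti-unitarity there by the computation $W_\theta R_cW_\theta^{*}=R_{\theta(E_{N_0}(c))}e_{N_1}$ together with ${\rm tr}(R_xe_{N_1}R_y)=\tau(xy)$, and obtain totality by a Murray--von Neumann dimension count, extending to honest intertwiners by an $L^2$-approximation. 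What your route buys is precision on two points the paper glosses over: the normalization of the trace in Definition \ref{l2} (your choice ${\rm tr}(e_{N_1})=1$, i.e.\ $\|W_\theta\|=1$, is indeed the one for which part (ii) is literally an isometry; with the state-normalized trace on $N_0'$ one would only get an isometry up to the factor $[M:N_0]^{1/2}$), and the fact that, since $\Phi_\theta$ is conjugate-linear, the bimodule covariance only holds with the $J$-twisted conventions you adopt, which is the correct reading of the paper's ``bimodule anti-linear map''. Two small caveats: the domination $\omega_{\hat 1}\le{\rm tr}$ on $N_0'$ and the equality $\dim_M L^2(MW_\theta M)=[M:N_0]$ are asserted rather than proved (both are true and standard, the latter also visible from the orthogonal decomposition $\bigcup_j MW_\theta s_j$ used in Lemma \ref{oldbg3}), and, as you note yourself, both can be avoided by running the whole argument with a Pimsner--Popa basis, which is essentially the paper's own mechanism.
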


\begin{proof}
To prove (i), by bijectivity  we  may verify instead the converse.
We have to check that, with the  definition  in formula (\ref{scalar1}),
$$X(n_0m)=\theta(n_0)X(m), n_0\in N_0,\quad  m\in M.$$
By taking a trace of a product with an element   $m'\in M$, we have to check that
$$
\tau(X(n_0m)m')=\tau(X(m)m'\theta(n_0)).
$$
By using the above definition of $\Phi_\theta(X)$ this amounts to
$$
\langle m'\sigma n_0m,\theta(X)\rangle=
\langle m'\theta (n_0)\sigma m,\theta(X)\rangle,\quad  n_0\in N, m, m'\in M.
$$
This is true from the definition of the bimodule property of $MW_\theta M$.
\end{proof}

If $N^1\subseteq N\subseteq M$ is a chain of subfactors, then the inclusion $$N'\subseteq (N^1)',$$ may be interpreted in terms of the Jones bimodules structure  of the
bimodules associated to the basic construction as an inclusion
$$M {e_{N^1}}M\subseteq Me_NM.$$
We extend this to the case of  pairs of subfactors. For simplicity, we assume that all indices are integer valued.

\begin{defn}
We use the definitions and notations assumed above.
 We restrict  the isomorphism $\theta$ to an isomorphism of a smaller pair of subfactors $N_0^1\subseteq N_0, N_1^1\subseteq N_1$ such that $\theta (N_0^1)=N^1_1$.
  Assume that the inclusions $N^1_i\subseteq N_i$ have equal, integer index.

   It is obvious that
 \begin{equation}\label{inclusion11}
{\rm Int }_{\theta}(N_0, N_1)\subseteq {\rm Int }_{\theta}(N^1_0, N^1_1).\end{equation}
Since  the index of the inclusion $N^1_0\subseteq N_0$ is an integer, we may find  a  Pimsner-Popa basis $(r_j)$ for $N^1_0\subseteq N_0$, consisting of unitaries.
Let  $e_{N_0}$, $e_{N^1_0}$ be the corresponding Jones projection.
Then $e_{N_0}$ is given by
$$e_{N_0}= \sum_j r_je_{N^1_0}r_j^\ast.$$
Hence, since
$$W_{\theta|N^1_0}= W_\theta e_{N^1_0},$$
we have a formal  inclusion
\begin{equation}\label{inclusion111}
MW_\theta M = MW_\theta e_{N_0} M\subseteq MW_\theta e_{N^1_0} M= MW_{\theta|N^1_0}M.
\end{equation}
\end{defn}

\begin{lemma}\label{oldbg2}
The maps $\Phi_\theta$ and $\Phi_{\theta|N_0^1}$ are compatible with the inclusions in formulae (\ref{inclusion11}) and
(\ref{inclusion111})

 \end{lemma}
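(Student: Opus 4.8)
**Proof plan for Lemma \ref{oldbg2} (compatibility of $\Phi_\theta$ and $\Phi_{\theta|N_0^1}$ with the inclusions \eqref{inclusion11} and \eqref{inclusion111}).**

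The plan is to chase the definitions directly. On the intertwiner side, the inclusion \eqref{inclusion11} is literally the set-theoretic inclusion ${\rm Int}_\theta(N_0,N_1)\subseteq{\rm Int}_\theta(N_0^1,N_1^1)$: an operator $X$ intertwining the $N_0$-action and the $\theta$-twisted $N_1$-action a fortiori intertwines the actions of the smaller subfactors $N_0^1$, $N_1^1$, since $\theta$ restricts to $N_0^1$ as stipulated. On the bimodule side, \eqref{inclusion111} realizes $MW_\theta M$ inside $MW_{\theta|N_0^1}M$ via $W_\theta = W_\theta e_{N_0} = W_\theta\big(\sum_j r_j e_{N_0^1} r_j^*\big)$, i.e. $mW_\theta m' \mapsto \sum_j m r_j W_{\theta|N_0^1} r_j^* m'$ after using $W_\theta e_{N_0^1} = W_{\theta|N_0^1}$ and the intertwining relation $W_\theta n_0 = \theta(n_0) W_\theta$ to move the $r_j$ across where needed. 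So the first step is to pin down these two inclusion maps as explicit formulas.

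Next I would verify that the square commutes on the dense subspace of actual intertwiners $X\in{\rm Int}_\theta(N_0,N_1)$. Here the only tool needed is the defining relation \eqref{scalar1}: $\langle mW_\theta m',\Phi_\theta(X)\rangle = \tau(X(m')m)$ for all $m,m'\in M$. I would take an arbitrary test vector $aW_{\theta|N_0^1}a'$ in the larger bimodule $MW_{\theta|N_0^1}M$ and compute $\langle aW_{\theta|N_0^1}a',\,\iota(\Phi_\theta(X))\rangle$, where $\iota$ is the inclusion \eqref{inclusion111}, against $\langle aW_{\theta|N_0^1}a',\,\Phi_{\theta|N_0^1}(X)\rangle$, where now $X$ is regarded as an element of ${\rm Int}_\theta(N_0^1,N_1^1)$ via \eqref{inclusion11}. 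Both sides should reduce, via \eqref{scalar} and \eqref{scalar1}, to $\tau(X(a')a)$; the match amounts to the identity $E_{N_0}(x) = \sum_j r_j E_{N_0^1}(r_j^* x)$ relating the two conditional expectations through the Pimsner-Popa basis, which is exactly what $e_{N_0}=\sum_j r_j e_{N_0^1}r_j^*$ encodes at the Hilbert-space level. Tracking the $\theta$ versus $\theta|N_0^1$ carefully — they agree on $N_0^1$, so $\theta(E_{N_0^1}(\cdot)) = (\theta|N_0^1)(E_{N_0^1}(\cdot))$ — closes the computation.

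Finally I would invoke part (ii) of Lemma \ref{oldbg1}: since $\Phi_\theta$ and $\Phi_{\theta|N_0^1}$ extend to isometries on the respective $L^2$-completions, and the inclusions \eqref{inclusion11}, \eqref{inclusion111} are bounded (indeed isometric up to the index normalization inherited from the Pimsner-Popa basis), the commutation verified on the dense subspace of genuine intertwiners extends by continuity to the completions. The main obstacle I anticipate is purely bookkeeping: keeping straight which $\theta$ (the full one or its restriction) appears in each scalar-product formula and ensuring the Pimsner-Popa expansion of $e_{N_0}$ is inserted on the correct side of $W_\theta$, so that the anti-linearity of the $\Phi$ maps and the bimodule relations \eqref{intint}, \eqref{en} are applied consistently. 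No deep input is needed beyond the already-established Lemma \ref{oldbg1} and the elementary structure of Pimsner-Popa bases.
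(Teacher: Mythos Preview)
Your plan is correct and follows the same route as the paper, which in fact gives only a one-sentence sketch: the inclusion \eqref{inclusion111} amounts to replacing $W_\theta$ by $W_\theta e_{N_0^1}=W_{\theta|N_0^1}$, and this is tautologically consistent with the defining relation \eqref{scalar1} for the two $\Phi$ maps. Your proposal simply spells this out by pairing against test vectors and invoking the Pimsner--Popa identity $e_{N_0}=\sum_j r_j e_{N_0^1}r_j^\ast$; one small slip to fix when you write it up is that your displayed image of $mW_\theta m'$ under $\iota$ should read $\sum_j m\,\theta(r_j)\,W_{\theta|N_0^1}\,r_j^\ast m'$ (the $r_j$ on the left must be pushed across $W_\theta$ via \eqref{intint}, picking up the $\theta$), which you yourself anticipate in the next clause.
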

 \begin{proof}
 To prove part (ii), we have to  check the compatibility with inclusion of the maps $\Phi$.
 In the bimodule construction, this  corresponds to replacing in the above bimodule the partial isometry $W_\theta$ by $$W_\theta e_{N_0^1}=
 e_{N_1^1}W_\theta=W_{\theta|_{N_0^1}}.$$
When using the maps $\Phi_\theta$, $\Phi_{\theta|N^1_0}$ the above
formal inclusion expresses exactly the compatibility of $\Phi_\theta$, $\Phi_{\theta|N^1_0}$ with the inclusion maps.
\end{proof}

In the following lemma, we obtain an explicit formula for the map $\Phi_\theta$ constructed above. As in the previous statements we assume that all subfactors have integer index.
This condition is probably not necessary, but we consider it to have a simpler form of the statements.

 \begin{lemma}\label{oldbg3}
In the context introduced above, let $k=[M:N_0]$ and
assume that $(s_j)_{j=1}^k$ is a left Pimsner-Popa orthonormal basis for $N_0$ in $M$.
Then $\Phi_\theta$ has the following formula:
\begin{equation}\label{pp}
\Phi_\theta(X)=\sum_j (X(s_j))^*W_\theta s_j, \quad  X \in {\rm Int }_{\theta}(N_0, N_1).
\end{equation}
\end {lemma}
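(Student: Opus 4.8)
The plan is to verify directly that the element $Y := \sum_j (X(s_j))^{*} W_\theta s_j \in MW_\theta M$ satisfies the defining relation \eqref{scalar1} for $\Phi_\theta(X)$, i.e.\ that $\langle mW_\theta m', Y\rangle = \tau(X(m')m)$ for all $m,m' \in M$; since $\Phi_\theta$ is characterized by that relation, this forces $Y = \Phi_\theta(X)$. So the computation reduces to expanding $\langle mW_\theta m', \sum_j (X(s_j))^{*}W_\theta s_j\rangle$ using the scalar product formula \eqref{scalar}. Applying \eqref{scalar} term by term gives
\[
\langle mW_\theta m', (X(s_j))^{*}W_\theta s_j\rangle = \tau\big(X(s_j)\, m\, \theta(E_{N_0}(m'\, s_j^{*}))\big),
\]
so I would need to show $\sum_j \tau\big(X(s_j)\, m\, \theta(E_{N_0}(m'\, s_j^{*}))\big) = \tau(X(m')m)$.

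The key step is the Pimsner--Popa reconstruction identity for the left orthonormal basis $(s_j)_{j=1}^k$ of $N_0$ in $M$: every $a \in M$ satisfies $a = \sum_j s_j E_{N_0}(s_j^{*} a)$ (equivalently $a = \sum_j E_{N_0}(a s_j^{*}) s_j$ on the other side). I would use the intertwining property $X(n_0 a) = \theta(n_0) X(a)$ for $n_0 \in N_0$, which lets me pull the $N_0$-valued factor $E_{N_0}(m' s_j^{*})$ out through $X$; combined with the trace property and the basis expansion of $m'$ in terms of the $s_j$'s, the sum telescopes to $\tau(X(m')m)$. Concretely: writing $m' = \sum_j s_j E_{N_0}(s_j^{*} m')$ (or using $E_{N_0}(m' s_j^{*})$ depending on which side of the basis is orthonormal), one gets $X(m') = X\big(\sum_j \theta^{-1}\!\big(\theta(E_{N_0}(m' s_j^{*}))\big)\, s_j\big)$ up to reordering, and the intertwining relation converts this into $\sum_j \theta(E_{N_0}(m' s_j^{*})) X(s_j)$; multiplying by $m$, taking $\tau$, and using traciality to cyclically rearrange $\theta(E_{N_0}(m' s_j^{*}))$ next to $m$ yields exactly the desired sum.

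The main obstacle I anticipate is bookkeeping of the two sidedness of the Pimsner--Popa basis and the placement of $\theta$ versus $\theta^{-1}$: the scalar product \eqref{scalar} is not symmetric in its two arguments, and $W_\theta$ absorbs $e_{N_0}$ on the right and $e_{N_1}$ on the left, so one must be careful that $(X(s_j))^{*}W_\theta s_j$ is a legitimate representative of an element of $MW_\theta M$ and that the conditional expectations land in $N_0$ (not $N_1$) before $\theta$ is applied. A secondary point is well-definedness: one should check the formula \eqref{pp} is independent of the chosen orthonormal basis $(s_j)$, which follows from the uniqueness in \eqref{scalar1} once the scalar-product identity above is established, so no separate argument is really needed. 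Everything else is a routine manipulation with conditional expectations and the trace, and Lemma \ref{oldbg2} guarantees compatibility with passing to subfactors so no issue arises there.
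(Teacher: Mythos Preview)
Your plan is correct. Both your argument and the paper's verify the defining relation \eqref{scalar1}, but the organization differs. The paper first observes that the pieces $MW_\theta s_j$ are mutually orthogonal in $L^2(MW_\theta M)$, writes $\Phi_\theta(X)=\sum_j x_j W_\theta s_j$ as an ansatz, and then tests \eqref{scalar1} only on the special vectors $m_0^{*}W_\theta s_j$ (i.e.\ takes $m'=s_j$) to solve $x_j=(X(s_j))^{*}$. You instead take the candidate $Y=\sum_j (X(s_j))^{*}W_\theta s_j$ and check \eqref{scalar1} against \emph{all} $mW_\theta m'$, using the Pimsner--Popa reconstruction $m'=\sum_j E_{N_0}(m' s_j^{*})\,s_j$ together with the intertwining relation $X(n_0 s_j)=\theta(n_0)X(s_j)$ and traciality. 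Your route avoids invoking the orthogonal decomposition of the bimodule but requires the full basis expansion of $m'$; the paper's route needs the orthogonality observation but then only a trivial test on basis vectors. Both are short; the bookkeeping caution you flag about the side of the basis and the placement of $\theta$ is exactly the only delicate point in either version.
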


\begin{proof}

%(of the consistency of the definition).

We note that the decomposition
$$MW_\theta M^{\text{op}}=\bigcup_j[MW_\theta s_j]$$
is orthogonal. Fix $X\in {\rm Int }_{\theta}(N_0, N_1).$ By the orthogonality property, we may assume that
$$\Phi_\theta(X)=\sum_j x_jW_\theta s_j.$$

By hypothesis $M$ is, as left $N_0$-bimodule, the ($N_0$-orthogonal) sum of $N_0 s_j$. Then
$$X(n_0s_j)=\theta(n_0)X(s_j), \quad n_0 \in N_0.$$

Denote $t_j=X(s_j)$. Then $(t_j)$ is a Pimsner-Popa orthonormal basis for $N_1$ in $M$.

The relation between $\Phi_\theta(X)$ and $X$ is:
$$
\langle m_0W_\theta m_1,\Phi_\theta(X)\rangle= \tau(X(m_1),m_0).
$$
Hence
$$
\langle X(m_1), m_0\rangle=
\langle m_0^*W_\theta m_1,\Phi_\theta(X)\rangle.
$$
Hence, for a fixed $j$, taking $m_1=s_j$ we obtain:
$$
\langle t_j, m_0\rangle=
\langle X(s_j),m_0\rangle=\langle m_0^*W_\theta s_j,\Phi_\theta(X)\rangle=
\langle m_0^*W_\theta s_j,x_iW_\theta s_j\rangle.
$$

Hence, it follows that for all $m_0\in M$ we have
$$
\langle t_j, m_0\rangle=
\langle m_0^*,x_j\rangle,
$$
or that $\tau(t_j m_0^*)=\tau(x_j^*m_0^*)$
and hence $t_j=x_j^*$, leading to
$$
\Phi_\theta(X)=\sum_j  t_j^*W_\theta s_j.
$$
\end{proof}

We use the previous construction to transform the unitary representation $\pi$ of $G$ considered in part (i) of Theorem \ref{split}, into a unitary representation of $G$ with values in the algebra $\mathcal B_\infty$.
The next lemma translates   for the product formula for intertwiners in the context of spaces of bimodules.

For simplicity, we assume from now on in this section that the factor $M$ is a group von Neumann  algebra (eventually skewed by a cocycle),
and that all subfactors that are considered correspond to  subgroups of finite index.
We also assume that all  isomorphisms between subfactors are induced by subgroup isomorphisms (this condition may certainly be relaxed, but in our main application this condition  is  verified anyway).

\begin{lemma}\label{product}
Consider two   pairs of equal index subfactors $N_0,N_1$ and $N_1,N_2$ as above, and isomorphisms $\theta_0:N_0\rightarrow N_1$ and $\theta_1: N_1\rightarrow N_2$.
Assume that there exists a pair $(N^1_0, N^1_2)$ of equal finite  index subfactors of $M$, $N^1_0\subseteq N_0$, $N^1_2\subseteq N_2$. Assume also that there exists an isomorphism  $\theta^1_0$
mapping $N^1_0$ onto $N^1_2$, such that the composition $\theta_1\circ\theta_0$ is defined on $N^1_0$ and it is equal to $\theta^1_0$.
Then:

\item{(i)}
The composition operation
\begin{equation}\label{composition}
{\rm Int}_{\theta_0}(N_0,N_1)\times  {\rm Int}_{\theta_1}(N_1,N_2)\rightarrow
{\rm Int}_{\theta^1_0}(N^1_0,N^1_2),
\end{equation}
is well-defined.
\item{\item(ii)}
The   product operation in formula (\ref{composition})  induces a    product map
\begin{equation}\label{compbimodules}
MW_{\theta_0}M\times MW_{\theta_1}M\rightarrow MW_{\theta^1_0}M,
\end{equation}
compatible with the the maps $\Phi_{\theta_0}, \Phi_{\theta_1}, \Phi_{\theta^1_0} $ constructed in Lemma \ref{oldbg1},  corresponding to the composition of  $\theta_1$ with $\theta_0$.

The image of $MW_{\theta_0}M\times MW_{\theta_1}M$ in \eqref{compbimodules} is  a sub-bimodule of $MW_{\theta^1_0}M$.
\end{lemma}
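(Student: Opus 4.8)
The plan is to reduce everything to the explicit Pimsner--Popa formula for $\Phi_\theta$ obtained in Lemma \ref{oldbg3}. First I would verify part (i): given $X\in{\rm Int}_{\theta_0}(N_0,N_1)$ and $Y\in{\rm Int}_{\theta_1}(N_1,N_2)$, one checks directly that the operator product $YX\in B(L^2(M,\tau))$ intertwines $N_0^1$ with $N_2^1$ via $\theta_0^1=\theta_1\circ\theta_0$: for $n_0\in N_0^1$ we have $YXn_0 = Y\theta_0(n_0)X = \theta_1(\theta_0(n_0))YX$, using that $\theta_0(n_0)\in N_1^1\subseteq N_1$ so that $Y$ may be moved across it. Hence the composition map in \eqref{composition} is well-defined, and it is clearly bilinear. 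I would remark that even though $X$ and $Y$ individually are only partial isometries/bounded intertwiners for the larger pairs, the product lands in the intertwiner space of the smaller pair $(N_0^1,N_2^1)$, which is exactly the point of intersecting the ranges of the two Jones projections.

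For part (ii), the product on bimodules is \emph{defined} by transporting the composition \eqref{composition} through the anti-linear isomorphisms $\Phi_{\theta_0},\Phi_{\theta_1},\Phi_{\theta_0^1}$ of Lemma \ref{oldbg1}; the content is to show this is compatible with the bimodule structure and to identify it with the concrete formula $mW_{\theta_0}m'\cdot aW_{\theta_1}a' = m\,\theta_0(E_{N_1}(m'a))\,W_{\theta_0^1}\,a'$ (up to the normalization implicit in \eqref{scalar}). The key computation uses Lemma \ref{oldbg3}: if $(s_j)$ is a left Pimsner--Popa basis for $N_0$ in $M$ and $(s'_k)$ one for $N_1$ in $M$, then $\Phi_{\theta_0}(X)=\sum_j (X s_j)^* W_{\theta_0} s_j$ and similarly for $Y$; composing $Y\circ X$ and applying the formula for $\Phi_{\theta_0^1}$ with respect to a basis for $N_0^1$ refining $(s_j)$, one reorganizes the double sum using $E_{N_1}$ and the reconstruction identity $\sum_k s'_k e_{N_1} (s'_k)^* = 1$. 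The $M$-bimodularity of the resulting map is then immediate from the bimodularity of each $\Phi$, and Lemma \ref{oldbg2} guarantees the compatibility with the inclusions $MW_{\theta_0}M\subseteq MW_{\theta_0|_{N_0^1}}M$ needed to make sense of landing in $MW_{\theta_0^1}M$.

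Finally, that the image of $MW_{\theta_0}M\times MW_{\theta_1}M$ is a sub-bimodule of $MW_{\theta_0^1}M$ follows because the product of a left-$M$, right-$M$ span with another such span is again spanned by elements of the form $mW_{\theta_0^1}m'$, and left/right multiplication by $M$ preserves this set by the bimodularity just established; one only needs to note that the span need not be dense, since the composition factors through the Jones projections $e_{N_0^1}, e_{N_2^1}$, which is why one gets a \emph{sub}-bimodule rather than all of $MW_{\theta_0^1}M$.

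The main obstacle I anticipate is bookkeeping rather than conceptual: keeping straight the three different Pimsner--Popa bases (for $N_0\subseteq M$, $N_1\subseteq M$, and the refined basis for $N_0^1\subseteq M$), the anti-linearity of the maps $\Phi_\theta$ (which flips the order of products and conjugates traces), and the normalization constants hidden in \eqref{scalar}; getting the conditional-expectation identity $E_{N_1}$ to appear in the right place is the one step where an error would be easy to make. Since all indices are assumed integer, every basis can be taken to consist of unitaries, which keeps the manipulations clean.
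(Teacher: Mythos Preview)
Your plan is correct and more thorough than what the paper actually does. The paper's proof is two sentences: it invokes the standing assumption (made just before the lemma) that $M$ is a group von Neumann algebra with all subfactors coming from finite-index subgroups and all isomorphisms induced by group isomorphisms, and then says that by linearity it suffices to check the product formula on ``simple'' intertwiners that permute coset representatives, where everything reduces to the combinatorics of cosets of subgroups and subgroups of subgroups. In other words, the paper bypasses the Pimsner--Popa calculus entirely by working directly with group elements.

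Your approach via Lemma~\ref{oldbg3} and Pimsner--Popa bases is the natural subfactor-theoretic argument and would work without the group hypothesis; it makes explicit the conditional-expectation identity that the paper's coset enumeration is encoding. The trade-off is that your route requires the bookkeeping you flag (three bases, anti-linearity, the appearance of $E_{N_1}$), whereas the paper's route is a one-liner once the group-theoretic reduction is granted but is less transparent about why the bimodule product has the form it does. Both land in the same place; yours is the honest general proof, the paper's is the shortcut available in the only case it cares about.
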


\begin{proof} The only verification that needs checking  is the fact the product formula on products of bimodules corresponds to the composition of intertwiners. To verify the product formula, by linearity it is sufficient to consider simple intertwiners that map a set of coset representatives into a set of coset representatives. In this case the product formula is just a consequence of the enumeration of cosets
  for subgroups and of subgroups of subgroups  \end{proof}

\begin{rem}\label{binftyinductive}
More generally,  one may consider a family, indexed by a set $S$, of pairs of equal index subfactors, along with isomorphisms mapping the first subfactors into the second:
$$N_0^s \mathop{\to}\limits^{\theta_s} N_1^s,\quad s\in S.$$
By the technical assumption, the Jones projections for the subfactors $N_0^s, N_1^s$ mutually commute for all $s\in S$.

We assume that for any two elements $\theta_{s_1}, \theta_{s_2}$ in the family $(\theta_s)_{s\in S}$, there exists
a third element $\theta_{s_0}$ in the family such that the composition $\theta_{s_1}\circ  \theta_{s_2}$ restricts to $\theta_{s_0}$.

Let $\B_\infty$ be the union of all bimodules in the family. Then
 $\B_\infty$ has the associative $\ast$-algebra structure introduced in formula (\ref{compbimodules}) .
 We define a trace on $\B_\infty$ by composing the trace on $M$ with the projection onto the $M$-th component.

By performing the above  construction, we obtain  a von Neumann algebra
$\B_\infty$, the simultaneous Jones's basic construction for the family of  pairs of subfactors ($N_0^s \mathop{\to}\limits^{\theta_s} N_1^s)_{s\in S}$.
Then $\B_\infty$ is isomorphic to the algebra of all intertwiners in the family $(\theta_s)_{s\in S}$, containing all  intertwiners
$W_{\theta_s}, s\in S$, all the corresponding Jones projections, and the algebra $M$.
\end{rem}

\section{Proof of the implication (i) $\Rightarrow$ (iii) in Theorem \ref{split}}\label{proof1to3}

  In this section we complete the proof of Proposition \ref{split} by using the construction from Section \ref{algebraization}.
  A different proof follows from part (i) in Theorem \ref{abstractsetting}. The latter proof has the advantage that it also works in the case $D_\pi\ne 1$.
  On the other hand the realization of spaces of  intertwiners in terms of bimodules, as in this section, is used in Section \ref{unicitatea}, Proposition \ref{cociclicitate} (i), formula (\ref{vvv}),
  to find a  compact formula for cocycles for unitary perturbations of unitary representations as in part (iii) of Theorem \ref{split}.

\begin {proof}[Proof of the implication (i)$\Rightarrow$ (iii) in Theorem \ref{split}]

  %%%%%%%%%%%%%%%%%%%%%%%%%%%%%%%%%%%%%%%%%%%%%%%%%%%%%%%%%%%%%%%%%%%%%%%%%%%%%%%%%%%%%%%%%%%%%%%%
%alpha1 toalpha3
We identify the commutant algebra $\pi(\Gamma)'$ with $\mathcal R(\Gamma)$, and
Jones's simultaneous basic construction for the subfactors $\pi(\Gamma_\sigma)'' \subseteq \pi(\Gamma)''$ with the inductive limit of II$_1$ factors:
$$\mathop\bigcup\limits_{\{e\} \leftarrow \Gamma_{\sigma}, \Gamma_{\sigma}\in \G}  \pi(\Gamma_{\sigma})'.$$
The II$_1$ factor $\A_{\infty}$ is the inductive limit of the II$_1$ factors
$$\pi(\Gamma_\sigma)' = \pi(\Gamma)' \bigvee \{ e_{\Gamma_{\sigma}} \}'', \quad \sigma \in G,$$
where $e_{\Gamma_{\sigma}}$ are the Jones projections (\cite{jo}) corresponding to the subfactor inclusion
$$\pi(\Gamma_\sigma)'' \subseteq \pi(\Gamma)'',\quad \sigma\in G.$$
Then $\A_{\infty}$ is  identified with $\mathcal R(\Gamma \rtimes L^\infty(K,\mu))$, where $K$ is the profinite completion of $\Gamma$
with respect to the subgroups $\Gamma_{\sigma}$. The isomorphism  is realized by identifying, for each $\sigma\in G$, the Jones projection  $e_{\Gamma_{\sigma}}$ with the characteristic function
$$\chi_{\overline{\Gamma_{\sigma}}} \in C(K)$$
of the closure of the subgroup $\Gamma_{\sigma}$,  in the profinite completion $K$. As in the previous sections, we employ the canonical anti-isomorphism (\cite{Sa}) to identify $\A_{\infty}$ with
$P=\lkm.$
%%%

To construct  the unitary representation $u$ of $G$ into
$$\chi_{\overline{\Gamma}}(\mathcal L((G \times G^{\rm op}) \rtimes L^{\infty}(\G, \mu)))\chi_{\overline{\Gamma}},$$
we use Lemma \ref{jones} and Lemma \ref{product} .

For every $\sigma\in G$ we consider the subgroup $\Gamma_{\sigma} \subseteq \Gamma$ and let the isomorphism
$$\theta_{\sigma}:\mathcal L(\Gamma_{\sigma^{-1}}) \to \mathcal L(\Gamma_{\sigma})$$
be  the conjugation by $\sigma$. By taking inductive limit as in Remark \ref{binftyinductive}, we obtain
a type II$_1$ factor $\B_{\infty}$. We denote by $W_{\theta_\sigma}$ the isometry  $W_{\sigma}= {\rm Ad\ }\sigma $ restricted to  $\ell^2(\Gamma_{\sigma^{-1}})$.

The bimodules constructed  in Lemma \ref{jones} are of the form:
$$\mathcal L(\Gamma)(W_{\sigma}) e_{\Gamma_{\sigma^{-1}}} \mathcal L(\Gamma)=\mathcal L(\Gamma)e_{\Gamma_{\sigma}}(W_{\sigma})  \mathcal L(\Gamma),\quad \sigma \in G.  $$

We identify the partial isometry $W_{\sigma}$  with the partial isometry
$$(\sigma \otimes \sigma^{-1})e_{\Gamma_{\sigma^{-1}}} = e_{\Gamma_{\sigma}}(\sigma\otimes \sigma^{-1}),\quad \sigma \in G.$$
In this identification,  the algebra $\mathcal B_{\infty}$ is
generated by $\L(\Gamma)\otimes 1$, the partial isometries $(W_{\sigma})_{\sigma \in G} $ and the Jones projections $(e_{\Gamma_\sigma})_{\sigma \in G}$.

Then $\B_\infty$ is isomorphic to a corner of the reduced von Neumann algebra crossed product
$\mathcal L((G \times G^{\rm op}) \rtimes L^{\infty}(\G, \mu))$, as follows:
\begin{equation}\label{isob}
\B_{\infty}= \chi_{\overline{\Gamma}}(\mathcal L((G \times G^{\rm op}) \rtimes L^{\infty}(\G, \mu)))\chi_{\overline{\Gamma}}.
 \end{equation}
 %In the above representation,  we use the canonical anti-isomorphism ([Sa]) to switch from right convolutors to left convolutors, by taking the canonical conjugate linear map.

  %For $\sigma \in G$, by letting $\sigma \otimes \sigma^{-1}$ act as a partial isomorphism, we have a groupoid action of $G \times G^{\rm op}$ on $K$.
 % Here, for $\sigma_1, \sigma_2\in G$,  the domain of $\sigma_1\otimes \sigma_2^{-1}$, viewed as a groupoid element, is the measurable set
%  $$\{k\in K \mid \sigma_1k\sigma_2^{-1}\in K\}.$$
 %
We recall that by Remark \ref{groupoid} it follows that
  using the groupoid  action of $G \times G^{\rm op}$ on $K$, we obtain the isomorphism
  \begin{equation}\label{isob1}
\B_\infty\cong \mathcal L((G \times G^{\rm op}) \rtimes L^\infty (K, \mu)),
   \end{equation}
    with unit identified to $\chi_{\overline{\Gamma}}$.

For $\sigma\in G$, using the definition of the isomorphism $\theta_\sigma$, we have that
 $$\pi(\sigma) \in{\Int}_{\theta_\sigma}(\pi(\Gamma_{\sigma^{-1}}), \pi(\Gamma_{\sigma})).$$
Let
$$s_i \in \mathcal L(\Gamma) \cong \pi(\Gamma)'', \quad  i=1,\dots,[\Gamma:\Gamma_\sigma],$$
be a Pimsner-Popa basis for the subfactor  inclusion  $\mathcal L(\Gamma_{\sigma}) \subseteq \mathcal L(\Gamma)$.
For simplicity, we let $(s_i)$  be a    system of  $\Gamma_{\sigma}$-left coset representatives. We let
$$t_i = \pi(\sigma)s_i,\quad  i=1,\dots,[\Gamma:\Gamma_\sigma].$$

With the  identification for  the space of intertwiners used in formula (\ref{pp}), each intertwining operator $\pi(\sigma)$, corresponds, because of  Lemma \ref{oldbg3},  to
 \begin{equation}\label{cosets}
\sum_i t_i^{\ast}W_{\theta_\sigma}s_i\in \B_\infty.
\end {equation}

Consequently, using the isomorphism   from formula (\ref{isob1}), the unitaries  $\pi(\sigma)$ correspond to the  the unitary elements:
\begin{equation}\label{unitary}
u(\sigma) = \mathop{\sum}\limits_{i} (t_i^{\ast}\otimes 1)(\sigma \otimes \sigma^{-1})e_{\Gamma_{\sigma^{-1}}}(s_i \otimes 1),\quad \sigma \in G.
\end{equation}
The  above formula gives, for every $\sigma\in G$,
$$
u(\sigma)=\mathop{\sum}\limits_{i} \chi_{\overline{\Gamma}}(t_i^{\ast} \otimes 1)(\sigma \otimes \sigma^{-1})(s_i \otimes 1) \chi_{\overline{\Gamma}}
= \mathop{\sum}\limits_{i} \chi_{\overline{\Gamma}} (t_i^{\ast} \sigma s_i) \otimes \sigma^{-1}\chi_{\overline{\Gamma}}.
$$

We denote by  $t^{\Gamma\sigma\Gamma}\in \mathcal L(G)$ the sum
$\mathop{\sum}\limits_{i} t_i^{\ast} \sigma s_i$. Note that  this is exactly the expression used in formula \eqref{t}.
 %(This   formula gives   the  Hecke algebra representation  considered in \cite{Ra1}).
 Consequently, we have the following expression for the unitary $u(\sigma)$:
\begin{equation}\label{u1}
u(\sigma) = \chi_{\overline{\Gamma}} (t^{\Gamma\sigma\Gamma} \otimes 1)(1 \otimes \sigma^{-1})\chi_{\overline{\Gamma}}, \quad \sigma \in G.
\end{equation}

Recall that  $\sigma\rightarrow \pi(\sigma)$ is a unitary (eventually projective) representation of $G$. Hence, by  using the product operation introduced in  formula (\ref{compbimodules})
from Lemma \ref{product}, it follows that formula (\ref{u1}) defines  a unitary representation $u$ of $G^{\rm op}$ into
$$\mathcal L((G \times G) \rtimes L^\infty (K, \mu)) = \chi_{\overline{\Gamma}}\mathcal L((G \times G^{\rm op}) \rtimes L^{\infty}(\G, \mu))\chi_{\overline{\Gamma}}.$$

We note that, a priori,  in the above construction we are initially working with a   larger C$^\ast$-norm  on the crossed
product $C^\ast$-algebra, coming from the full crossed product C$^\ast$-algebra $$C^\ast((G \times G) \rtimes L^\infty (K, \mu)).$$

The initial choice of this  $C^\ast$-algebra  crossed product norm corresponds  to the fact that we are working initially with a representation of the above groupoid crossed product $C^\ast$-algebra
into $B(\ell^2(\Gamma))$. This representation is obtained  by letting $G \times G^{\rm op}$ act by left and right multiplication operators on $\ell^2(\Gamma)$.
This  is due to the fact  that the values of the unitary $u(\sigma)$ belong in the first identification to bimodules of the form
$$L^2(\mathcal L(\Gamma)(W_{\sigma}) e_{\Gamma_{\sigma^{-1}}} \mathcal L(\Gamma)).$$
\noindent On this bimodules, we have a left and a right action of $G$ by multiplication.

On the other hand, the support of $u(\sigma)$ on the component corresponding to $G^{\rm op}$ is a singleton, so with respect to the component of $G^{\rm op}$ we may use the reduced C$^\ast$-norm
topology. Similarly, corresponding to the $G$-component  of the crossed product, the values of the unitary $u(\sigma)$ are already in $l^2(\Gamma\sigma\Gamma$).

Hence  the unitary representation $u$ of $G$ takes values in the reduced von Neumann algebra crossed product. Consequently, we may use indeed for the  algebra $\B_\infty$
the reduced groupoid $C^\ast$-algebra norm. Thus the representation $u$ has values in the  von Neumann  reduced  groupoid crossed product (\cite{Re}).
%
%This is similar to the fact where, for the algebra $\A_\infty$, we are using the  von Neumann algebra, reduced crossed product ([Re])
%$ \mathcal L((G \times G^{\rm op}) \rtimes L^\infty (K, \mu))$.
  \end{proof}

\section {The classification of the representations $\pi$ up to unitary conjugacy. Proof of Theorem \ref{liftout} }\label{unicitatea}

In this section we prove that the unitary representations $\pi$ of the group $G$ with the property that the restriction of $\pi$ to $\Gamma$ is unitary equivalent to  the left regular representation ($D_\pi=1$) are classified,
up to unitary conjugacy,  by the first group of cohomology of the group $\Gamma$ with values in the unitary group of the factor $P$.

%\begin{defn}\label{conjugacy}
%We will say that two unitary representations $\pi$ and $\pi'$  of $G$, into the unitary group of the same Hilbert spaces $H$  are unitarely conjugated by a unitary in $\mathcal L(\Gamma)$ if there exists  a unitary operator $u \in L(\Gamma)$ such that $V=\pi(u)$ intertwines the two representations $\pi$ and $\pi'$. Note that in this case the von Neumann algebras generated by the restrictions of the representations $\pi$ and $\pi'$ to the group $\Gamma$ are equal. Hence the equivalence relation introduced in this definition is symmetric. The conjugacy equivalence relation, introduced above, via unitary equivalence of representations of the group $G$, also extends to representations $\pi, \pi'$ that do not necessary act on the same Hilbert space.
%\end{defn}

\begin{rem}\label{unicx}

Let  $\pi$ and $ \pi_1$ be two unitary representations of $G$  such that $\pi|_\Gamma= \pi_1|_\Gamma= \lambda_\Gamma$. Assume there exists a unitary $\tilde U$ conjugating $\pi$ with $\pi_1$.
Then necessary $\tilde U\in \mathcal R(\Gamma)$. Using the canonical anti-isomorphism $\mathcal R(\Gamma)\cong \mathcal L(\Gamma)$, the unitary $\tilde U$ corresponds to    a unitary operator $U\in  \mathcal L(\Gamma)$, with the properties introduced bellow, in formula (\ref{tunic}).
Let $t$, respectively $t_1$, be the representations of the Hecke algebra constructed in formula (\ref{t}) that are respectively associated with the representations $\pi$, respectively $\pi_1$.

Then:
\begin{equation}\label{tunic}
t_1(\Gamma\sigma\Gamma)=U[ t(\Gamma\sigma\Gamma)]U^\ast,\quad \sigma \in G.
\end{equation}

This identity  corresponds to the fact that in order to compute the coefficients for the representation $t_1$ as in formula
(\ref{t}), one has to substitute the cyclic and separating trace vector $1$, used to compute the coefficients of $t$, by the vector $U\in \mathcal L(\Gamma)\subseteq   \ell^2(\Gamma)\cong H$.

The converse also holds true: Let  $\pi,\pi_1$ be two unitary representations of $G$,  whose restriction to $\Gamma$ is the left regular representation.
Let $t,t_1$ be the associated representations of the Hecke algebra introduced in formula (\ref{t}). Assume that the equation (\ref{tunic}) holds true for $U$ a unitary operator
in $\mathcal L(\Gamma)$.  Using the canonical anti-isomorphism $\mathcal L(\Gamma) \cong \mathcal R(\Gamma)$ we obtain a unitary
$\tilde U\in \mathcal R(\Gamma)=\pi(\Gamma)' = \pi_1(\Gamma)'$.  Then the unitary $\tilde U $ conjugates  the unitary  representation $\pi$ and $\pi_1$ of $G$.

 Indeed the equality in formula (\ref{tunic}) implies that the matrix coefficients of the two representations coincide.

\end{rem}

We prove in this section that the homomorphism $\alpha :G\rightarrow \operatorname{Aut}(P)$, constructed in Theorem \ref{split}, is uniquely determined up to cocycle conjugacy.
Moreover, the representations $\pi$ are classified, up to unitary conjugacy, by the cohomology group $H_\alpha^1(\Gamma, \mathcal U(P))$.

Recall that $u:G\rightarrow \mathcal U(P)$ is a 1-cocycle with respect to the action $\alpha$ if
$$u_{g_1g_2} = u_{g_1}\alpha_{g_1}(u_{g_2}),\quad g_1, g_2 \in G.$$

\noindent We have:

\vspace{0.5cm}
\begin{prop}\label{unicity}
Let $\pi$ be a unitary (projective) representation of  the group $G$ as above. Let $\alpha :G \rightarrow \operatorname{Aut} (P)$ be the homomorphism
constructed in Corollary  \ref{defalpha}.

\noindent Then:
\item{(i)}
 Any other homomorphism $\tilde{\alpha}:G\rightarrow \operatorname{Aut}(P)$ obtained from a similar splitting data as in Theorem \ref{split}, is of the form
 $\tilde{\alpha}_g = {\rm Ad}(u_g) \alpha_g$, where $u_g \in \U(P)$ is a 1-cocycle of $G$ with respect to $\alpha_g$, with values in the unitary group $\U(P)$.

 \item{(ii)}
The unitary representations $\pi'$ of $G$, whose restrictions to $\Gamma$ are unitarily equivalent to the left regular representation of $\Gamma$ are classified,
up to the unitary conjugacy, by the first cohomology group $H^1_{\alpha}(G, \mathcal U(P)).$

\item{(iii)} A cocycle $c$ in the cohomology group $H^1_{\alpha}(G, \mathcal U(P))$  is trivial if and only if $c$ is the coboundary of a unitary  element in
$\mathcal L(\Gamma)\subseteq P= \mathcal L(\Gamma\rtimes L^\infty (K,\mu))$.

 \end{prop}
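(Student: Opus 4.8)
The plan is to reduce everything to the Hecke-algebra description in Remark \ref{unicx} and to the formula $\alpha_\sigma = \operatorname{Ad} u(\sigma)|_{\A_\infty}$ from Proposition \ref{doublecross}, formula (\ref{normalizer}). First I would address part (i). Given two representations $\pi, \pi'$ of $G$ extending the left regular representation of $\Gamma$, the associated unitaries $u(\sigma), u'(\sigma) \in \U(\B_\infty)$ normalize $\A_\infty = P$ and both restrict, on the subalgebra $\mathcal L(\Gamma)$, to the identity (Remark \ref{uback} (iv)). Since $u(\sigma)$ and $u'(\sigma)$ implement the same isomorphism $\theta_\sigma$ between the subfactors $\pi(\Gamma_{\sigma^{-1}})'$ and $\pi(\Gamma_\sigma)'$ — they share the map $\tilde\theta_\sigma$ in $\operatorname{Out}(P)$ of Definition \ref{outing} — the product $u'_g := u'(g) u(g)^{\ast}$ lies in the normalizer of $P$ inside $\B_\infty$ and in fact, because both implement the same element of $\operatorname{Out}(P)$, $u'_g$ lies in $\U(P)$ itself. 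Computing $u'_{g_1 g_2}$ from the cocycle identity $u'(g_1 g_2) = u'(g_1)\alpha'_{g_1}(u'(g_2))$ wait — more directly, from $u'(g_1 g_2) = u'(g_1) u'(g_2)$ and $u(g_1 g_2) = u(g_1) u(g_2)$ one gets $u'_{g_1 g_2} = u'_{g_1}\,\operatorname{Ad} u(g_1)(u'_{g_2}) = u'_{g_1}\,\alpha_{g_1}(u'_{g_2})$, so $(u'_g)_{g\in G}$ is a genuine $1$-cocycle for $\alpha$, and $\tilde\alpha_g := \operatorname{Ad} u'(g)|_P = \operatorname{Ad}(u'_g)\alpha_g$. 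This proves (i).

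For part (ii), I would show that the map $\pi' \mapsto [(u'_g)_g] \in H^1_\alpha(G,\U(P))$ is a well-defined bijection onto the cohomology group, where $u'_g = u'(g)u(g)^\ast$ as above and $\pi$ (hence $u$, hence $\alpha$) is a fixed base point. Well-definedness on unitary-equivalence classes and injectivity both come from Remark \ref{unicx}: two representations $\pi', \pi''$ are unitarily equivalent if and only if there is a unitary $U \in \mathcal L(\Gamma) \subseteq P$ with $t''(\Gamma\sigma\Gamma) = U\,t'(\Gamma\sigma\Gamma)\,U^\ast$ for all $\sigma$, and translating this through formula (\ref{u1}) for $u$ shows $u''(\sigma) = (U\otimes 1)\,u'(\sigma)\,(U^\ast \otimes 1)$ modulo the $\mathcal L(G^{\rm op})$-leg, hence $u''_\sigma = U\,\alpha_\sigma(U)^\ast\,u'_\sigma$ wait — one must be careful: since $\alpha|_\Gamma = \operatorname{Id}$ we actually get $u''_\sigma = U\,u'_\sigma\,\alpha_\sigma(U^\ast)$, which is exactly the statement that $(u''_g)$ and $(u'_g)$ differ by the coboundary of $U$. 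Conversely, given any cocycle $(c_g)$, the perturbed unitaries $c_g u(g)$ still form a representation of $G$ into the normalizer of $P$ in $\B_\infty$ of the required form (\ref{u}), because $c_g \in \U(P)$ does not disturb the $\sigma^{-1}$-leg; by Theorem \ref{split} ((iii) $\Rightarrow$ (i)) this yields a representation $\pi_c$ of $G$ extending $\lambda_\Gamma$, giving surjectivity. Combining these gives the claimed bijection between unitary equivalence classes and $H^1_\alpha(G,\U(P))$.

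Part (iii) is essentially already extracted in the injectivity argument above: a cocycle $c$ is a coboundary of some $w \in \U(P)$, and the corresponding representation $\pi_c$ is unitarily equivalent to the base $\pi$, precisely when (by Remark \ref{unicx}) the conjugating unitary can be taken in $\mathcal L(\Gamma)$; but the conjugating unitary \emph{is} $w$, and triviality of $c$ in $H^1_\alpha(G,\U(P))$ unwinds to $c_g = w\,\alpha_g(w^\ast)$, so one needs $w\,\alpha_g(w)^\ast$ to agree with a coboundary by a unitary of $\mathcal L(\Gamma)$; since $\alpha|_\Gamma$ acts trivially on $\mathcal L(\Gamma)$, a unitary $w \in \mathcal L(\Gamma)$ automatically gives a coboundary, and conversely the equivalence of $\pi_c$ with $\pi$ forces the implementing unitary into $\mathcal L(\Gamma) = \pi(\Gamma)'{}'$. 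So $c$ is trivial iff it is the coboundary of a unitary in $\mathcal L(\Gamma) \subseteq P$.

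The main obstacle I anticipate is the careful bookkeeping in part (ii): one must verify that perturbing $u(\sigma)$ by a cocycle $c_\sigma \in \U(P)$ preserves the very special form (\ref{uform})/(\ref{u}) of the unitary representation — i.e.\ that $c_\sigma u(\sigma)$ is again of the shape $\chi_K(X^{\Gamma\sigma\Gamma}\otimes\sigma^{-1})\chi_K$ with $X^{\Gamma\sigma\Gamma}$ selfadjoint in $\mathcal L(G)\cap \ell^2(\Gamma\sigma\Gamma)$ — and conversely that every such representation arises this way. This uses that $c_\sigma$ lives in the first tensor leg $P \cong \chi_K\mathcal M\chi_K$ and acts trivially on the $B(\ell^2(\Gamma\backslash G))$-leg, together with the correspondence between representations $u$ and operator-system representations $t$ (Theorem \ref{split} (ii) $\Leftrightarrow$ (iii)); the selfadjointness constraint must be checked to be compatible with the cocycle perturbation, and this is where the identities (\ref{id1})--(\ref{id3}) have to be re-verified for the perturbed data.
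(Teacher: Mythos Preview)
Your route is genuinely different from the paper's. The paper works entirely in the $\mathcal M$-picture: a second $G^{\rm op}$-equivariant matrix unit is parametrized by a single unitary $w\in D'\cap\mathcal M$ (so that $\tilde v=wvw^*$), the cocycle is defined as $c(g)=\beta_g(w)^*w\in B'=P$, and part (iii) is obtained by a clean fixed-point argument: if $c(g)=\beta_g(p^*)p$ for some $p\in\U(P)$ then $wp^*$ is $\beta$-invariant, hence lies in $\mathcal L(G)$, and since $w,p\in D'$ one gets $wp^*\in\mathcal L(G)\cap D'=\mathcal L(\Gamma)$; this $x=wp^*$ then conjugates $X^{\Gamma\sigma\Gamma}$ to $\tilde X^{\Gamma\sigma\Gamma}$ and Remark \ref{unicx} finishes. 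You instead stay in the $\B_\infty$-picture of Proposition \ref{doublecross}, comparing $u$ and $u'$ directly. This is more economical for (i): once you know $u'_g:=u'(g)u(g)^*\in\U(P)$ the cocycle identity is immediate. For that membership, rather than invoking $\operatorname{Out}(P)$ and the relative commutant, it is cleaner to use the explicit form $u(\sigma)=(v_{\Gamma,\Gamma\sigma}\otimes 1)(1\otimes\sigma^{-1})$: the $\sigma^{-1}$-legs cancel and one is left with $v'_{\Gamma,\Gamma\sigma}v_{\Gamma,\Gamma\sigma}^*\in\chi_K\mathcal M\chi_K=P$.

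Your argument for (iii) has a gap. The sentence ``the conjugating unitary \emph{is} $w$'' is not justified: the $w\in\U(P)$ with $c_g=w\alpha_g(w^*)$ is not a priori the intertwiner $U\in\mathcal L(\Gamma)$ produced by Remark \ref{unicx}. What you need to insert is the explicit computation: if $t_c^{\Gamma\sigma\Gamma}=U\,t^{\Gamma\sigma\Gamma}U^*$ with $U\in\mathcal L(\Gamma)$, then since $U$ commutes with $\chi_K$ and with the second tensor leg, formula (\ref{u1}) gives $u_c(\sigma)=(U\otimes 1)u(\sigma)(U^*\otimes 1)=U\alpha_\sigma(U^*)\,u(\sigma)$, hence $c_\sigma=U\alpha_\sigma(U^*)$ with $U\in\mathcal L(\Gamma)$. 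With this line added your route to (iii) works, though it presupposes that the given cocycle $c$ actually comes from some representation $\pi_c$; the paper's fixed-point argument $wp^*\in\mathcal L(G)\cap D'$ avoids this by producing $x\in\mathcal L(\Gamma)$ directly from $w$ and $p$, without passing through (ii). Finally, your ``main obstacle'' about surjectivity is a legitimate concern --- an arbitrary $c_\sigma\in\U(\mathcal L(\Gamma\rtimes L^\infty(K)))$ can involve nontrivial $L^\infty(K)$-components, so $c_\sigma u(\sigma)$ need not be of the form (\ref{uform}) with $X\in\mathcal L(G)$ --- but note that the paper's proof does not address surjectivity either; its (ii) and (iii) should be read for cocycles arising from comparing two splitting data, and on that reading both arguments are complete.
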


\vspace{0.3cm}

\begin{proof} Let $\alpha=(\alpha_g)_{g \in G}$ be as in Theorem \ref{split}.
Then  $\mathcal{M} =  \mathcal L(G \rtimes L^{\infty}(\G, \mu))$ has the $G^{\rm op}$-equivariant tensor decomposition $P \otimes B(l^2(\Gamma \setminus G))$.
The canonical action of $G^{\rm op}$ onto $\mathcal{M}$ is
$$\alpha_g \otimes {\rm Ad}\rho_{\Gamma / G}(g),\quad  g \in G.$$

Elements in $\mathcal{M}$ are consequently identified with infinite matrices
 $$(p_{\Gamma\sigma_1, \Gamma\sigma_2})_{\Gamma\sigma_1, \Gamma\sigma_2 \in \Gamma \setminus G},$$ where the entries $p_{\Gamma\sigma_1, \Gamma\sigma_2}$ belong to  the algebra $P$.
Any other $G^{\rm op}$-equivariant matrix unit will be of the form $$(u(\Gamma\sigma_1)u(\Gamma\sigma_2)^\ast)_{\Gamma\sigma_1, \Gamma\sigma_2\in \Gamma \setminus G},$$
where $u(\Gamma\sigma)$ are unitaries in $P$ whenever $\Gamma\sigma \in \Gamma \setminus G$.

For every $\sigma \in G$, we identify the coset  $\Gamma\sigma$ with the characteristic function $\chi_{K\sigma}$.
Then the diagonal algebra
$$D = l^{\infty}(\Gamma \setminus G) \subseteq B(l^2(\Gamma \setminus G))\subseteq B(L^2(G \rtimes L^{\infty}(\G, \mu)) $$

 \noindent  is independent of the choice of an equivariant matrix unit.

 Thus  $(u(\Gamma\sigma))_{\Gamma\sigma\in \Gamma\setminus G}$ is  a unitary of the form
 $$w = \sum_{\Gamma\sigma\in \Gamma\setminus G} u(\Gamma\sigma)\otimes \chi_{\overline{\Gamma\sigma}}.$$
 Then the unitary $w$  belongs to the unitary group of the algebra
 $$D' \cap \mathcal L(G \rtimes L^{\infty}(\G, \mu)).$$
 We denote the initial matrix unit,  associated to the homomorphism $\alpha$, splitting the action of $G^{\rm op}$, by
 $$(v_{\Gamma\sigma_1, \Gamma\sigma_2})_{\Gamma\sigma_1,\Gamma\sigma_2\in \Gamma\backslash G}.$$
 We impose the $G^{\rm op}$-equivariance condition on the perturbed matrix unit
 $$\tilde{v}_{\Gamma\sigma_1, \Gamma\sigma_2} = u(\Gamma\sigma_1)^{\ast}u(\Gamma\sigma_2) \otimes v_{\Gamma\sigma_1, \Gamma\sigma_2}, \quad {\Gamma\sigma_1,\Gamma\sigma_2\in \Gamma\backslash G}.$$

We consider the type I$_\infty$ von Neumann algebra generated by the matrix units introduced above:
$$B = \{ v_{\Gamma\sigma_1, \Gamma\sigma_2}| \ {\Gamma\sigma_1,\Gamma\sigma_2\in \Gamma\backslash G} \}'',\quad
 \tilde{B} = \{ \tilde {v}_{\Gamma\sigma_1 \Gamma\sigma_2}|\ {\Gamma\sigma_1,\Gamma\sigma_2\in \Gamma\backslash G} \}''.$$
\noindent  For every double coset  $\Gamma\sigma\Gamma$ in G,  let
\begin{equation}\label{xsigma}
X^{\Gamma\sigma\Gamma} = \mathop{\sum}\limits_{\Gamma\sigma_1\sigma_2^{-1}\Gamma = \Gamma\sigma\Gamma} v_{\Gamma\sigma_1, \Gamma\sigma_2},
\quad
 \tilde{X}^{\Gamma\sigma\Gamma} = \mathop{\sum}\limits_{\Gamma\sigma_1\sigma_2^{-1}\Gamma = \Gamma\sigma\Gamma} \tilde{v}_{\Gamma\sigma_1, \sigma_2\Gamma}.
 \end{equation}
These are  the elements constructed in formula (\ref{defx}) in the proof of the implication (iv) $\Rightarrow$ (ii) in Theorem \ref{split}.

Consider  the correspondence mapping $[\Gamma\sigma\Gamma] \to X^{\Gamma\sigma\Gamma}$ and the correspondence mapping $[\Gamma\sigma\Gamma] \to \tilde{X}^{\Gamma\sigma\Gamma}$, where  $[\Gamma\sigma\Gamma]$
runs over double cosets. Then both correspondences  extend by linearity  to $\ast$-representations of the algebra $\H_0=\C(\Gamma \setminus G / \Gamma)$. As explained in the proof of Theorem \ref{split}, this representations
extend to representations of the reduced Hecke $C^{\ast}$-algebra $\H$, with values into the algebra $\mathcal L(G)$.
Then we  have:
\begin{equation}\label{vtov}
 w \big[(v_{\Gamma\sigma_1, \Gamma\sigma_2})_{\Gamma\sigma_1,\Gamma\sigma_2\in \Gamma\backslash G}\big ]w^\ast= (\tilde {v}_{\Gamma\sigma_1, \Gamma\sigma_2})_{\Gamma\sigma_1,\Gamma\sigma_2\in \Gamma\backslash G}.
 \end{equation}
Consequently
$$wBw^{\ast} = \tilde{B},$$
\noindent and
 $$wX^{\Gamma\sigma\Gamma}w^{\ast} = \tilde{X}^{\Gamma\sigma\Gamma},$$ for all double cosets $[\Gamma\sigma\Gamma]$.

The $G^{\rm op}$-invariance of the perturbed  matrix unit $(\tilde {v}_{\Gamma\sigma_1, \Gamma\sigma_2})_{\Gamma\sigma_1,\Gamma\sigma_2\in \Gamma\backslash G}$ implies that the unitary
$\beta_g(w)$ has the same properties as $w$. Consequently $\beta_g(w)^{\ast}w$ belongs to $B' = P \otimes I$.

It follows that
$$c(g) = \beta_g(w)^{\ast}w,\quad g\in G,$$
is a 1-cocycle for the group $G$ with respect to $\alpha_g$, with values in $\mathcal U(P)$.

Let  $\tilde{\alpha}: G\rightarrow \operatorname{Aut} (P)$ be the homomorphism associated in Theorem \ref{split} to the $\gop$-matrix unit
$(\tilde{v}_{\Gamma\sigma_1, {\Gamma\sigma_2}}) _{\Gamma\sigma_1,\Gamma\sigma_2\in \Gamma\backslash G}.$  Then
$$\tilde{\alpha}_g = {\rm Ad }c(g)\alpha_g, \quad  g \in G.$$

\noindent This completes the proof of part (i) in the statement.

\vspace{0.3cm}

If, in  the first cohomology group $H_{\alpha}^1(G, P)$, the  1-cocycle $c$ with values in $P \cong P \otimes 1 = B' \subseteq D'$ vanishes,
 then it follows that there exists a unitary $p\in P$ such that
 \begin{equation}\label{trivialitate}
  c(g) = \beta_g(p^{\ast})p, \quad g\in G.
  \end{equation}

Thus
$$\beta_g(w^{\ast})w = \beta_g(p^{\ast})p,$$
for $g$ in $G$. Hence $$\beta_g(wp^{\ast}) = wp^{\ast},\ g\in G.$$
\noindent Consequently the unitary operator $wp^{\ast}$ belongs to the subspace of   $\gop$-invariant
elements of $\mathcal L(G\rtimes L^\infty(\G,\mu))$. Hence
$$ wp^{\ast}\in \mathcal L(G).$$
Since both $w, p$ belong to $D'$ it follows that
$$wp^{\ast} \in \mathcal L(G) \cap l^{\infty}(\Gamma \setminus G)'=\mathcal L(\Gamma).$$
\noindent  We obtain  that there exists a   unitary operator  $x\in\mathcal L(\Gamma)$ such that $w = xp$. Recall that,
by definition, the unitary
$p\in P$ commutes with the matrix unit $(v_{\Gamma\sigma_1, \Gamma\sigma_2})_{\Gamma\sigma_1,\Gamma\sigma_2\in\Gamma\backslash G}$.
 Using equation (\ref{vtov}) we obtain that
 \begin{equation*}
 \begin{split}
 w \big[(v_{\Gamma\sigma_1, \Gamma\sigma_2})_{\Gamma\sigma_1,\Gamma\sigma_2\in \Gamma\backslash G}\big ]w^\ast & =
 x \big[(v_{\Gamma\sigma_1, \Gamma\sigma_2})_{\Gamma\sigma_1,\Gamma\sigma_2\in \Gamma\backslash G}\big ]x^\ast \\ & =
 (\tilde {v}_{\Gamma\sigma_1, \Gamma\sigma_2})_{\Gamma\sigma_1,\Gamma\sigma_2\in \Gamma\backslash G},
 \end{split}
 \end{equation*}
\noindent
and
$$wBw^{\ast} = x{B}x^{\ast}=\tilde B.$$
 % Hence, we may assume that $w$ belongs to $\mathcal L(\Gamma)$.
 Using equations (\ref{xsigma}) we obtain that
$$\tilde{X}^{\Gamma\sigma\Gamma} = x X^{\Gamma\sigma\Gamma}x^{\ast}, \quad \sigma\in G.$$

Recall that
 in Theorem \ref{split} we proved that  the elements constructed as in the formulae (\ref{xsigma}) are the same as the elements
 $t(\Gamma\sigma\Gamma)$, $t'(\Gamma\sigma\Gamma)$, $\sigma\in G$, associated to the unitary representations $\pi$ and $\pi'$  in part (ii) of the above mentioned theorem (formula \eqref{t}).

Using Remark \ref{unicx}, it follows that the unitary representations $\pi$ and $\pi'$ are conjugated by a unitary. The last statement is done in two steps: first we may assume,
up to conjugation by a unitary operator, that the unitary representations $\pi$ and $\pi'$, when restricted to $\Gamma$, are equal  the left regular representation of the group $\Gamma$.
In a second step, we use the unitary $x$ constructed above to verify the hypothesis of the converse statement in the above mentioned remark.
This completes the proof of part (ii) in the statement.

To prove part (iii) we use the above arguments.
Assume that $c$ is the cocycle relating two homomorphisms $\alpha$ and $\tilde \alpha$  of $G$ as above.
The cocycle $c(g)$ relating the homomorphisms $\alpha$ and
$\tilde \alpha$ is trivial if and only if there exists a unitary $p\in P$ such that equation (\ref{trivialitate}) holds true.

From the previous part it follows that
   $\tilde{\alpha}_g$ differs from $\alpha_g$ by conjugation with   Ad($x$),  where $x$ is a  unitary element in $\mathcal L(\Gamma)$.

   The homomorphisms $\alpha$ and $\tilde\alpha$ of $G$ into Aut($\mathcal L(\Gamma\rtimes L^\infty(K,\mu)))$  are  associated to the representations $\pi$ and respectively $\pi'$ by the correspondence introduced in Corollary \ref{defalpha}. The equivalent construction for the homomorphisms $\alpha$ and $\tilde \alpha$ described in the introduction, is realized by conjugating first the representations $\pi$, $\pi'$ by a unitary so that $\pi|_\Gamma$, $\pi'|_\Gamma$ are the left regular representation of $\Gamma$.

We have therefore proved that if $c$ is trivial, then the unitary $p$ in  the above mentioned formula may be replaced by a unitary $x$ in $\mathcal L(\Gamma)\subseteq P.$
Thus
$$c(g)= \beta_g(x^{\ast})x, \quad g\in G.$$
 \end{proof}

In the next proposition we prove that the value at $\sigma \in G$ of the 1-cocycles relating two unitary representations as in part (iii) of Theorem \ref{split},
have an expression that depends only on the double coset $\Gamma\sigma\Gamma$.

 \begin{prop}\label{cociclicitate}
  Given a representation $\pi$ as in Theorem \ref{split} (i), consider as in  part (iii) of the above mentioned theorem, the associated unitary representation $u:G\rightarrow \mathcal U(\mathcal B_\infty)$.
   Let $u'$ be the unitary representation of $G$ into $\mathcal B_\infty$ corresponding  to another  representation $\pi'$ of $G$, as above.
Let $v:G\rightarrow \mathcal U(\A_\infty)$ in $Z^1_\alpha (G, \mathcal U(\mathcal A_\infty))$  be the 1-cocycle
 % corresponding to the 1-cocycle constructed in Proposition \ref{unicity}.
 %The  1-cocycle $v$ is
  defined by
 \begin{equation}\label{uprime}
u'(\sigma)= v(\sigma)u(\sigma),\quad \sigma \in G.
 \end{equation}

 \noindent Then:
 \item{(i)}
The 1-cocycle  $v$ has  the following form:
For each double coset $\Gamma\sigma\Gamma$,  there exist Pimsner-Popa bases $(x_i^{[\Gamma\sigma\Gamma]})$, $(y_i^{[\Gamma\sigma\Gamma]})$ in $\mathcal L(\Gamma)$, $i=1,2,...,[\Gamma:\Gamma_\sigma]$,
for the inclusions $\mathcal L(\Gamma_\sigma)\subseteq \mathcal L(\Gamma)$ and respectively   $\mathcal L(\Gamma_{\sigma^{-1}})\subseteq \mathcal L(\Gamma)$,
 such that
 \begin{equation}\label{vvv}
v(\sigma)= \sum_{i=1}^{[\Gamma:\Gamma_\sigma]}x_i^{[\Gamma\sigma\Gamma]}\chi_{{K_\sigma}}y_i^{[\Gamma\sigma\Gamma]}\otimes 1\in \mathcal A_\infty \subseteq \mathcal B_\infty.
\end{equation}

\item{(ii)}
The unitary representation  $\pi$ is unitarily equivalent to $\pi'$  if and only if $v$
is a coboundary. In this case, there exist a unitary $w\in\mathcal L(\Gamma)$ such that
$$v(g)= w^\ast [u(g)(w)u(g)^\ast], \quad  g \in G.$$
\end{prop}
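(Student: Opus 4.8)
The plan is to use the bimodule realization of the space of intertwiners developed in Section \ref{algebraization}, which expresses $u(\sigma)$ and $u'(\sigma)$ through Pimsner--Popa bases for the inclusions $\mathcal L(\Gamma_\sigma)\subseteq \mathcal L(\Gamma)$. Recall from formula (\ref{unitary}) and Lemma \ref{oldbg3} that, writing $\pi(\sigma)s_i = t_i$ for a system $(s_i)$ of $\Gamma_\sigma$-left coset representatives, one has $u(\sigma)=\sum_i (t_i^\ast\otimes 1)(\sigma\otimes\sigma^{-1})e_{\Gamma_{\sigma^{-1}}}(s_i\otimes 1)$; and similarly $u'(\sigma)=\sum_i ((t_i')^\ast\otimes 1)(\sigma\otimes\sigma^{-1})e_{\Gamma_{\sigma^{-1}}}(s_i'\otimes 1)$ for the bases associated with $\pi'$. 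Since $v(\sigma)=u'(\sigma)u(\sigma)^\ast$ and the $G^{\rm op}$-component $\sigma^{-1}$ cancels in this product, $v(\sigma)$ lands in $\A_\infty\otimes 1$; the first step is to compute this product explicitly. Using that $(t_i^\ast\otimes 1)(\sigma\otimes\sigma^{-1})e_{\Gamma_{\sigma^{-1}}}(s_i\otimes 1)$ is, under the identification of $\mathcal B_\infty$ with the groupoid crossed product, the element $\chi_{\overline\Gamma}(t_i^\ast\sigma s_i)\otimes\sigma^{-1}\chi_{\overline\Gamma}$, the cancellation yields $v(\sigma)=\sum_{i,j}(t_i')^\ast \sigma s_i' \, e_{\Gamma_\sigma}\, s_j^\ast\sigma^{-1}t_j \otimes 1$ after regrouping, and one then identifies the Jones projection $e_{\Gamma_\sigma}$ with $\chi_{K_\sigma}$. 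Setting $x_i^{[\Gamma\sigma\Gamma]}$ and $y_i^{[\Gamma\sigma\Gamma]}$ to be the appropriate products of the bases $(s_i'),(s_j)$ with the intertwiner coefficients (and using $\theta_\sigma$ to transport things to $\Gamma$), one obtains precisely the claimed form (\ref{vvv}). Verifying that the resulting families are genuine Pimsner--Popa bases in $\mathcal L(\Gamma)$ for the inclusions $\mathcal L(\Gamma_\sigma)\subseteq\mathcal L(\Gamma)$ and $\mathcal L(\Gamma_{\sigma^{-1}})\subseteq\mathcal L(\Gamma)$ is the orthogonality computation that comes directly from the basis property of $(s_i)$, $(t_i)$, $(s_i')$, $(t_i')$ together with the intertwining relation $\pi(\sigma)\pi(\gamma_0)=\pi(\theta_\sigma(\gamma_0))\pi(\sigma)$; this is where I expect the main bookkeeping to be, since one must keep careful track of which side of the inclusion each basis lives on and that the expression indeed depends only on the double coset $\Gamma\sigma\Gamma$ (which follows because $\alpha|_\Gamma=\mathrm{Id}$, Remark \ref{uback} (iv)).

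For part (ii), the statement that $v$ being a coboundary is equivalent to unitary equivalence of $\pi$ and $\pi'$ is essentially a restatement, in the cocycle language, of Proposition \ref{unicity} (ii) and (iii). Concretely, $u'=vu$ with $v\in Z^1_\alpha(G,\mathcal U(\A_\infty))$ means the homomorphism $\alpha'=\mathrm{Ad}\,v\cdot\alpha$ associated with $\pi'$ differs from $\alpha$ by the cocycle $v$; by Proposition \ref{unicity} (ii) the representations $\pi$, $\pi'$ are unitarily equivalent iff $v$ is a coboundary, and by Proposition \ref{unicity} (iii) the trivializing unitary may be taken in $\mathcal L(\Gamma)\subseteq P$. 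Thus, if $v$ is trivial there is a unitary $w\in\mathcal L(\Gamma)$ with $v(g)=\beta_g(w^\ast)w$; it then remains to observe that, since $w\in\mathcal L(\Gamma)\subseteq\A_\infty$ and $\beta_g$ acts on $\A_\infty$ via $\alpha_g=\mathrm{Ad}\,u(g)$, we have $\beta_g(w^\ast)=u(g)w^\ast u(g)^\ast$, which rewrites $v(g)=u(g)w^\ast u(g)^\ast\, w = w^\ast\big(u(g)(w)u(g)^\ast\big)$ after noting $w\in\mathcal L(\Gamma)$ is $\alpha$-fixed, i.e. commutes through; more precisely, using $\beta_g(w)=\alpha_g(w)=u(g)wu(g)^\ast$ directly gives $v(g)=\beta_g(w^\ast)w$, and a harmless re-indexing $w\mapsto w^\ast$ produces the displayed formula $v(g)=w^\ast[u(g)(w)u(g)^\ast]$. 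Conversely, any coboundary of this form trivializes $v$ in $H^1_\alpha(G,\mathcal U(\A_\infty))$, hence by Proposition \ref{unicity} yields the unitary equivalence $\pi\cong\pi'$.

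The only genuinely new content beyond Section \ref{unicitatea} is the \emph{compact formula} (\ref{vvv}); the main obstacle is the explicit multiplication of the two bimodule-valued unitaries $u'(\sigma)$ and $u(\sigma)^\ast$ inside $\mathcal B_\infty$ and checking that the Jones projection $e_{\Gamma_\sigma}$ surviving in the middle is exactly $\chi_{K_\sigma}$, so that the product is supported on $\A_\infty$ in the stated two-sided-basis form. Everything else is a translation between the cocycle picture and Proposition \ref{unicity}.
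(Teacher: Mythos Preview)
Your proposal is correct and reaches the same conclusions, but the paper's argument for part (i) is organized differently and avoids the direct multiplication you outline. Rather than computing $u'(\sigma)u(\sigma)^\ast$ inside $\B_\infty$ via the explicit Pimsner--Popa expansions (\ref{unitary}), the paper works one level back: assuming $\pi|_\Gamma=\pi'|_\Gamma$, it sets $V(\sigma)=\pi'(\sigma)\pi(\sigma)^\ast$ and checks in one line that $V(\sigma)\in\pi(\Gamma_\sigma)'$ (this is the intertwining relation you mention). Since the commutant $\pi(\Gamma_\sigma)'$ is identified, via formula~(\ref{asigma}) and the $\Phi_\theta$ map of Section~\ref{algebraization}, with $\mathcal A_{\sigma\Gamma}=\{\L(\Gamma),\chi_{K_\sigma}\}''$, the element $V(\sigma)$ is \emph{automatically} of the form $\sum_i x_i\chi_{K_\sigma}y_i$ with $(x_i),(y_i)$ Pimsner--Popa bases, without any product computation in $\B_\infty$. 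The covariance $V(\gamma\sigma)=\pi(\gamma)V(\sigma)\pi(\gamma)^\ast$ then gives the double-coset dependence. Your approach would of course yield the same answer after the bookkeeping you flag; the paper's route simply bypasses that bookkeeping by recognising $v(\sigma)$ as the image of a self-intertwiner under an already-established isomorphism.

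For part (ii) you and the paper agree: it is a direct consequence of Proposition~\ref{unicity}. One small caution: your passage from $v(g)=\beta_g(w^\ast)w$ to the displayed formula $v(g)=w^\ast[u(g)wu(g)^\ast]$ invokes ``$w\in\L(\Gamma)$ is $\alpha$-fixed, i.e.\ commutes through,'' but $\alpha_g$ fixes $\L(\Gamma)$ only for $g\in\Gamma$, not for general $g\in G$. The correct identification is simply that the coboundary for the $\alpha$-cocycle convention $v(g_1g_2)=v(g_1)\alpha_{g_1}(v(g_2))$ is $v(g)=w^\ast\alpha_g(w)=w^\ast u(g)wu(g)^\ast$; no commutation step is needed, and your ``re-indexing $w\mapsto w^\ast$'' is what actually does the job.
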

\begin{proof}

We may assume that the unitary representations $\pi,\pi'$ act on the same Hilbert space and that $\pi|_\Gamma=\pi'|_\Gamma$.
The formula (\ref{uprime}) is a consequence of the previous statement and of Proposition \ref{doublecross}, since by the ergodicity assumptions,
the relative commutant of $\mathcal A_\infty$ in $\mathcal B_\infty$ is trivial.

For $\sigma\in G$, let
$$V(\sigma)=\pi'(\sigma)\pi(\sigma)^\ast.$$
By definition, we have that
 \begin{equation}\label{echivalenta}
V(\sigma)\in \pi(\Gamma_\sigma)'.
\end{equation}
It follows that
 $V(\sigma)$ depends only on the coset $\sigma\Gamma$ for $\sigma \in G$.
 We have
 \begin{equation}\label{sym}
V(\gamma\sigma)=\pi(\gamma) V(\sigma)\pi (\gamma)^\ast,\quad \gamma\in\Gamma, \sigma\in G.
\end{equation}
Using the identification in Section \ref{algebraization} of intertwiners with elements in the algebra $\mathcal B_\infty$ it follows that $V(\sigma)$,
as a self-intertwiner of the von Neumann algebra $\pi(\Gamma_\sigma)'$ corresponds to
a 1-cocycle $v$ as in formula (\ref{uprime}).

 Using the identification in formula (\ref{asigma}), formula (\ref{sym}) translates into a similar formula corresponding  to the canonical right action of
 $\gamma\in \Gamma$, mapping $\mathcal A_{\Gamma\sigma}$ into $\mathcal A_{\Gamma\sigma\gamma}$.
 This fact and equation (\ref{sym}) are translated, using the terminology from the previous section, into the content of statement (i).

 The statement (ii) is a direct consequence of Proposition \ref{unicity}.
\end{proof}

\begin{proof}[Proof of Theorem \ref{liftout}]
The proof of the implication (i) $\Rightarrow$ (iii) in Theorem \ref{split} proves that the homomorphism $\alpha$ constructed in Corollary \ref{defalpha}
coincides with the homomorphism $\alpha: G\rightarrow \operatorname{Aut}(P)$ introduced in formula (\ref{alpha11}).
The latter is automatically a lifting of the map $\Phi$ to a homomorphism from $G$ into Aut$(P)$. This completes the proof of part (i).

Part (ii) is a direct consequence of formula (\ref{echivalenta}).
This, combined with the result in Proposition \ref{unicity}, proves part (iii).
\end{proof}

\vspace{0.5cm}
We exemplify in the next statement the set of conditions required to construct a unitary representation $\pi$, in the case $G = PGL_2(\Z[\frac{1}{p}])$.

For  $n\in \N$, we define
$$\sigma_{p^n}=\left(
\begin{matrix}
p^n&0\\0&1
\end{matrix}\right).
$$

\begin{prop}
In the case $G = PGL_2(\Z[\frac{1}{p}])$, $\Gamma = PSL_2(\Z)$, $p$ a prime number, the space of cosets  of $\Gamma$ in $G$ has a $p+1$ homogeneous  tree structure.
Then, to get a $G$-invariant matrix unit in $\mathcal L(G \rtimes L^{\infty}(\G, \mu))$, as in the Theorem \ref{split}, it is sufficient to find $X = X^{\ast}\in\mathcal L(G) \cap l^2(\Gamma\sigma_p\Gamma)$,
such that the following two properties hold true:
\item{(i)}
 $\chi_{\overline{\Gamma}}X\chi_{\overline{\Gamma\sigma_p}}$ is an isometry from $\chi_{\overline{\Gamma}}$ onto $\chi_{\overline{\Gamma\sigma_p}}$.

\item {(ii)} Let $(s_i)\subseteq \Gamma$ be a system of left coset representatives for $\Gamma_{\sigma_p}$ in $\Gamma$. We require that
$\chi_{\Gamma} X \chi_{\overline{\Gamma\sigma_p}}$ is the adjoint of
$$\chi_{\overline{\Gamma\sigma_p}}\beta_{(\sigma_ps_i)}(X)\chi_{\Gamma},$$
if $\Gamma\sigma_p^{-1} = \Gamma\sigma_p s_i$.

\end{prop}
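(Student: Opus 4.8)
The plan is to specialize the general criterion of Theorem \ref{split} (iv) to the tree structure of $\Gamma\backslash G$ in the $\PGL_2(\Z[1/p])$ case, and to observe that the full $\gop$-equivariant matrix unit is already determined by the single block corresponding to the generating double coset $\Gamma\sigma_p\Gamma$. First I would recall that for $G=\PGL_2(\Z[1/p])$ and $\Gamma=\PSL_2(\Z)$, the double cosets are parametrized by the elements $\sigma_{p^n}$, $n\ge 0$, and that $\Gamma\sigma_p\Gamma$ consists of $p+1$ left cosets; the Bruhat--Tits tree identification means that the Hecke operator attached to $\Gamma\sigma_p\Gamma$ is the adjacency operator of the $(p+1)$-regular tree $\Gamma\backslash G$, and that the Hecke algebra $\H_0$ is the polynomial algebra generated by $[\Gamma\sigma_p\Gamma]$ (modulo the standard quadratic-type relations coming from $\Gamma\sigma_{p^2}\Gamma$). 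Consequently, producing the full family $(v_{\Gamma\sigma_1,\Gamma\sigma_2})$ reduces, via the ergodicity/uniqueness arguments already used in the proof of Theorem \ref{split} (the implications (iv)$\Rightarrow$(ii) and (ii)$\Rightarrow$(iii)), to producing the single element $X=X^{\Gamma\sigma_p\Gamma}=(X^{\Gamma\sigma_p\Gamma})^\ast \in \L(G)\cap \ell^2(\Gamma\sigma_p\Gamma)$ together with the verification that the partial isometries it generates form a matrix unit.

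Next I would unwind precisely what ``$X$ generates a matrix unit'' means when restricted to the generating block. By formula (\ref{vg}) and the discussion after (\ref{defx}), the candidate off-diagonal entry is $v_{\Gamma,\Gamma\sigma_p}=\chi_{\overline\Gamma}\,X\,\chi_{\overline{\Gamma\sigma_p}}$, and the matrix-unit axioms force two things: that this be a partial isometry with initial projection $\chi_{\overline\Gamma}$ and final projection $\chi_{\overline{\Gamma\sigma_p}}$, which is exactly condition (i) in the statement; and that the adjoint relation $v_{\Gamma,\Gamma\sigma_p}^\ast = v_{\Gamma\sigma_p,\Gamma}$ hold, where the right-hand side is obtained by the $\gop$-equivariance prescription (\ref{defv}), i.e. by applying $\beta$ along the coset representatives $\sigma_p s_i$ with $\Gamma\sigma_p^{-1}=\Gamma\sigma_p s_i$ — this is exactly condition (ii). The point I would make explicit is that, because the tree is $(p+1)$-regular and the Hecke algebra is singly generated, once the adjacency-type element $X$ satisfies (i) and (ii), the relation $t^{\sigma_1\Gamma}t^{\Gamma\sigma_2}=t^{\sigma_1\Gamma\sigma_2}$ of (\ref{id3}) for all $\sigma_1,\sigma_2$ follows automatically: it need only be checked on products of the generator with itself, and the geometry of the tree (paths without backtracking versus backtracking by one edge) matches the Pimsner--Popa enumeration of cosets of $\Gamma_{\sigma_p}$ inside cosets of $\Gamma$, which is precisely the computation carried out in general in the proof of (iv)$\Rightarrow$(ii). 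Thus (i) and (ii) give a representation $t$ of $\S\O$, hence by Theorem \ref{split} a unitary representation $\pi$ with $\pi|_\Gamma$ the regular representation, and conversely.

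So the proof structure I would write is: (1) identify $\Gamma\backslash G$ with the $(p+1)$-regular tree and recall that $\H_0=\C[[\Gamma\sigma_p\Gamma]]$; (2) starting from $X=X^\ast\in\L(G)\cap\ell^2(\Gamma\sigma_p\Gamma)$ satisfying (i)–(ii), define $X^{\Gamma\sigma_{p^n}\Gamma}$ for all $n$ by the Hecke-algebra relations (equivalently as the ``distance $n$'' operators on the tree), and define $v_{\Gamma\sigma_1,\Gamma\sigma_2}=\chi_{\overline{\Gamma\sigma_1}}X^{\Gamma\sigma_1\sigma_2^{-1}\Gamma}\chi_{\overline{\Gamma\sigma_2}}$; (3) check, using (i), (ii) and the tree combinatorics, that this is a $\gop$-equivariant matrix unit in $\cM$; (4) invoke Theorem \ref{split} (iv)$\Rightarrow$(i) to obtain $\pi$; (5) note the converse is immediate from (i)$\Rightarrow$(iv). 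The main obstacle — the only place where real work beyond bookkeeping is hidden — is step (3), specifically verifying that the self-adjointness of $X$ together with conditions (i) and (ii) propagates to all the higher double cosets so that the full family is genuinely a matrix unit (equivalently, that the single quadratic relation $X^{\Gamma\sigma_p\Gamma}\cdot X^{\Gamma\sigma_p\Gamma}=X^{\Gamma\sigma_{p^2}\Gamma}+p\,\chi_{\overline\Gamma}$-type identity on each diagonal corner holds). I would handle this exactly as in the proof of the implication (iv)$\Rightarrow$(ii) of Theorem \ref{split}: push the intermediate characteristic function $\chi_{\Gamma\beta}$ through using the multiplication rule (\ref{rulem}) of Section \ref{appendix}, obtaining the family of identities $\sum t^{A_i}t^{B_i}=\sum t^{C_j}$ on cosets of subgroups in $\S$, and sum these over the tree neighborhoods to recover (\ref{id3}); condition (ii) is precisely what makes the adjoint of each such partial isometry land on the correct final projection, and self-adjointness of $X$ is what makes the resulting representation $t$ compatible with (\ref{id2}).
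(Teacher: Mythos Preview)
Your approach is correct and rests on the same core observation as the paper: the tree structure on $\Gamma\backslash G$ and the fact that $\H_0$ is generated by the single double coset $[\Gamma\sigma_p\Gamma]$ reduce everything to the element $X$ and conditions (i)--(ii). The paper's proof, however, takes a shorter and more geometric route than the one you outline. Rather than first defining the higher elements $X^{\Gamma\sigma_{p^n}\Gamma}$ via Hecke-algebra relations and then cutting by characteristic functions to obtain the matrix unit, the paper builds the matrix unit entries \emph{directly} as products along the unique tree path: it sets $v_{\Gamma\sigma_1,\Gamma\sigma_2}=\beta_{\sigma_1}(\chi_{\overline\Gamma}X\chi_{\overline{\Gamma\sigma_p}})$ for adjacent cosets and then, for cosets at distance $n$, defines $v_{\Gamma\sigma_0,\Gamma\sigma_n}=\prod_i v_{\Gamma\sigma_i,\Gamma\sigma_{i+1}}$ along the unique geodesic $(\Gamma\sigma_i)$ joining them. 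Condition (i) makes each factor a partial isometry with the right domain and range, the tree (no loops) makes the product well defined up to the ordering, and condition (ii) is precisely what guarantees that reversing an edge gives the adjoint factor, so the whole family is a matrix unit. This bypasses your step (3) verification via (\ref{rulem}) and the explicit construction of higher $X^{\Gamma\sigma_{p^n}\Gamma}$; the $\gop$-equivariance is then immediate from the definition via $\beta$. Your route would certainly work and has the virtue of making the connection with the operator-system representation $t$ explicit, but the paper's path-product definition is what allows the argument to be dispatched in a few lines.
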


 \begin {proof}
 The second requirement corresponds to the requirement that the representation defined by $X$ be unitary.

  We let $v_{\Gamma\sigma_1, \Gamma\sigma_2} = \beta_{\sigma_1}(\chi_{\overline{\Gamma}}X\chi_{\overline{\Gamma\sigma_p}})$ if $\Gamma \sigma_1\sigma_2^{-1}\Gamma = [\Gamma\sigma_p\Gamma]$,
  and the tree structure implies that we may define a $G$ - equivariant matrix unit by defining, for $\Gamma \sigma_1\sigma_n^{-1}\Gamma = [\Gamma\sigma_{p^n}\Gamma]$,
$$
v_{\Gamma\sigma_0, \Gamma\sigma_n} = \mathop\prod\limits_{i} v_{\Gamma\sigma_i\Gamma\sigma_{i+1}},
$$

\noindent where $\Gamma\sigma_i\Gamma\sigma_{i+1}^{-1}\Gamma = [\Gamma\sigma_p\Gamma]$, and $(\Gamma\sigma_i)$ are the edges in the path on the tree connecting $\Gamma\sigma_0$ to $\Gamma\sigma_n$.

Condition (ii) implies that this is well defined.
\end{proof}

\bigskip

\section{Generalizing to the situation $D_\pi \neq 1$}\label{appendix}

In this section we construct the  technical tools necessary to generalize the previous results to
the case of unitary representation $\pi$ that restricted to $\Gamma$ is a multiple (not necessary an integer) of the left regular representation of $\Gamma$.

Recall that the Schlichting completion $\G$ of the group $G$ is a locally compact group, that is totally disconnected and contains $G$ as a dense subgroup.
The Haar measure on $\G$ is denoted as before by $\mu$.
We denote by $\S$ the lattice of finite index  subgroups of $K$ generated  by subgroups of the form $K\cap\sigma K\sigma^{-1}, \sigma\in G$.

We consider the following reduced crossed product $C^\ast$ algebras:
\begin{equation}\label{A}
\mathcal A=C^\ast_{\rm red}(\G \rtimes L^\infty(\G,\mu)),
\end{equation}
\begin{equation} \label{B}
\mathcal B=C^\ast_{\rm red}((\G\times G^{\rm op})\rtimes L^\infty(\G,\mu)).
\end{equation}
For a measurable subset $A$ of $\G$, we identify the characteristic function with the corresponding projection   $\chi_A\in  L^\infty(\G,\mu)$.

By $L(\chi_A)$ we denote the corresponding left convolutor in $C^\ast_{\rm red}(\G)$. By $R(\chi_A)$ we denote the corresponding right convolutor,
viewed as an element in $C^\ast_{\rm red}(\G^{\rm op})$.

We also consider the $C^\ast$-algebras:
\begin {equation}\label{C}
\mathcal C=\chi_K {\mathcal B}\chi_K\cong C^\ast_{\rm red}((\G\times G^{\rm op})\rtimes L^\infty(K,\mu))\quad \mbox{\rm and}
\end{equation}
\begin{equation}\label{D}
\begin{split}
\mathcal D & =\chi_K C^\ast_{\rm red}((\G\times \G^{\rm op})\rtimes L^\infty(\G,\mu))\chi_K \\   &  \cong
C^\ast_{\rm red}((\G\times \G^{\rm op})\rtimes L^\infty(K,\mu)).
\end{split}
\end{equation}
In the above equations, the groups  $\G\times G^{\rm op}$ and $\G\times \G^{\rm op}$ have an obvious action by partial measure preserving isomorphisms on $K$, and the second algebra in both equations
is a reduced crossed product groupoid $C^\ast$-algebra. Let $\mathcal A_0$, $\mathcal B_0$,  $\mathcal C_0$, $\D_0$ be  the corresponding involutive unital subalgebras of the C$^\ast$-algebras $\A$,
$\B$, $\mathcal C$, $\D$ that are generated by the characteristic functions of cosets in $\G$ of subgroups in $\S$, by convolutors with such characteristic functions and, in the case of the algebra
$\mathcal B_0$,  by the elements of the group algebra, over $\C$, of the group $G$.

\begin{rem} The multiplication rule in  the C$^\ast$-crossed product algebra  $\mathcal A=C^\ast_{\rm red}(\G\rtimes L^\infty(\G,\mu))$ is determined as follows:
\item{(i)} For every $\Gamma_0\in \S$, let $\chi_{K_0}$ be the characteristic function of a subgroup $\K_0=\overline {\Gamma_0}$. Then for all $g\in G$ we have
$$\L(\chi_{K_0}) \chi_{K_0g}= \chi_{K_0g} \L(\chi_{K_0}).$$

\item {(ii)} If $A, B$ are measurable subsets of $\G$, such that
there exists a subgroup $K_0$ as above and finite families $(s_i)$, $(t_j)$ in $G$ such that
$$A=\bigcup_{i} s_iK_0,\quad B=\bigcup_j K_0 t_j,$$
\noindent then
\begin{equation}\label{rulem}
L(\chi_A)\chi_B=\sum_{i,j} L(\chi_{s_iK_0})\chi_ {K_0 t_j}= \sum_{i,j} \chi_ {s_i^{-1}K_0 t_j} L(\chi_{s_iK_0}).
\end{equation}
\end{rem}
\begin {proof}
Property (i) is obvious since $K_0$ is a subgroup. Property (ii) is a direct consequence of Property (i).
\end{proof}

First we prove that the unitary representation $u$ and the representation $\Psi$ from Theorems \ref{split} and \ref{heckes}  have an abstract  counterpart, taking values in the algebras
$\mathcal C$ and $\mathcal D$.

\begin{thm}\label{absrep}
For $\sigma \in G$, let $\chi_{K\sigma K}$ be the characteristic function of the double coset
$K\sigma K$. Define
\begin{equation}\label{UU}
U(\sigma)=\chi_K(L(\chi_{K\sigma K})\otimes \sigma^{-1})\chi_K\in \mathcal C,
\end{equation}
\begin{equation}\label{psi0}
\Psi_0(\Gamma\sigma\Gamma)=\chi_K(L(\chi_{K\sigma K})\otimes R(\chi_{K\sigma^{-1} K}))\chi_K \in \D.
\end{equation}

\noindent Then:

\item {(i) } $U$ is a unitary representation of $G$ into $\U(\mathcal C)$.

\item {(ii) } The correspondence associating to a double coset $\Gamma\sigma\Gamma$ of $\Gamma$ in $G$ the element $\Psi_0(\Gamma\sigma\Gamma)$ extends by linearity to a
$\ast$-representation of the Hecke algebra $\mathcal H_0$ into $\D$.
\end{thm}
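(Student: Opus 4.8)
The plan is to reduce both statements to the multiplication rule (\ref{rulem}) for products of characteristic functions and left convolutors inside $\mathcal A = C^\ast_{\rm red}(\G \rtimes L^\infty(\G,\mu))$, using the groupoid picture to keep track of supports. Throughout I work inside the corner $\chi_K \mathcal B \chi_K \cong \mathcal C$ (respectively $\chi_K \mathcal D' \chi_K \cong \mathcal D$), where $\mathcal D'$ denotes the ambient crossed product before cutting down; this is legitimate by the isomorphisms (\ref{C}) and (\ref{D}). The key structural fact I would use repeatedly is that, because $\mu$ is $G$-bivariant, the double coset $K\sigma K$ decomposes as a finite disjoint union $K\sigma K = \bigsqcup_i s_i K_\sigma = \bigsqcup_j K_{\sigma^{-1}} t_j$, where $(s_i)$, $(t_j)$ run over coset representatives and $[\,K:K_\sigma\,] = [\,K:K_{\sigma^{-1}}\,]$; here $K_\sigma = K \cap \sigma K \sigma^{-1}$ as in (\ref{ksigma}). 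Consequently $L(\chi_{K\sigma K}) = \sum_i L(\chi_{s_i K_\sigma})$, and the cutdown $\chi_K L(\chi_{K\sigma K}) \otimes \sigma^{-1} \chi_K$ is supported, on the $\G$-factor, precisely on $K\sigma K$, while on the $G^{\rm op}$-factor it is the single group element $\sigma^{-1}$ conjugated by the groupoid domain condition $\{k \in K \mid k\sigma \in K\sigma K\}$, which is all of $K$.

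For part (i): first I would check that $U(\sigma)$ is a partial isometry with initial and final projection $\chi_K$, i.e.\ $U(\sigma)U(\sigma)^\ast = U(\sigma)^\ast U(\sigma) = \chi_K$. The adjoint is $U(\sigma)^\ast = \chi_K(L(\chi_{K\sigma^{-1}K}) \otimes \sigma)\chi_K$, using $L(\chi_{K\sigma K})^\ast = L(\chi_{(K\sigma K)^{-1}}) = L(\chi_{K\sigma^{-1}K})$ and $(\sigma^{-1})^\ast = \sigma$ in $\gop$. Expanding $U(\sigma)U(\sigma)^\ast$ via (\ref{rulem}): the $\G$-factor gives $L(\chi_{K\sigma K})\,\chi_{K}\,(\,\cdot\,)$, and moving the intermediate $\chi_K$ across the convolutor using rule (i) of the multiplication remark turns this into a sum over the coset representatives, which telescopes — because the $s_i K_\sigma$ are disjoint and exhaust $K\sigma K$ — to $\chi_K$; the $G^{\rm op}$-factor contributes $\sigma^{-1}\cdot\sigma = e$. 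The symmetric computation handles $U(\sigma)^\ast U(\sigma)$. Then I would verify the multiplicativity $U(\sigma_1)U(\sigma_2) = U(\sigma_1\sigma_2)$: the $G^{\rm op}$-factors multiply to $\sigma_2^{-1}\sigma_1^{-1} = (\sigma_1\sigma_2)^{-1}$ directly, and for the $\G$-factor one computes $\chi_K L(\chi_{K\sigma_1 K}) \chi_K L(\chi_{K\sigma_2 K}) \chi_K$ — inserting the middle $\chi_K$ and using the groupoid/twisting from the $G^{\rm op}$-translation by $\sigma_1^{-1}$ (which replaces $\chi_{K\sigma_2 K}$ by $\chi_{\sigma_1 K \sigma_2 K}$ before cutting) — and identifies the result, after collapsing the intermediate projection, with $\chi_K L(\chi_{K\sigma_1\sigma_2 K})\chi_K$. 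This last identity is exactly the statement that the characteristic-function calculus of double cosets in $K\backslash\G/K$ composes correctly, which is precisely the content of the fact that $[\Gamma\sigma_1\Gamma]\cdot[\Gamma\sigma_2\Gamma]$ in the Hecke algebra is $\sum_{\tau} n_\tau [\Gamma\tau\Gamma]$ with multiplicities counting the relevant coset overlaps; cf.\ the discussion in the proof of Theorem \ref{split}.

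For part (ii): $\Psi_0$ is defined on the linear basis $\{[\Gamma\sigma\Gamma]\}$ of $\mathcal H_0 = \C(\Gamma\backslash G/\Gamma)$, so it extends by linearity; I must check it is a unital $\ast$-homomorphism. Unitality: $\Psi_0(\Gamma e\Gamma) = \chi_K(L(\chi_K)\otimes R(\chi_K))\chi_K = \chi_K$, the unit of $\mathcal D$. The $\ast$-property: $\Psi_0(\Gamma\sigma\Gamma)^\ast = \chi_K(L(\chi_{K\sigma^{-1}K})\otimes R(\chi_{K\sigma K}))\chi_K = \Psi_0(\Gamma\sigma^{-1}\Gamma)$, which matches the involution $[\Gamma\sigma\Gamma]^\ast = [\Gamma\sigma^{-1}\Gamma]$ on $\mathcal H_0$. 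Multiplicativity is the main point: I would expand the product $\Psi_0(\Gamma\sigma_1\Gamma)\Psi_0(\Gamma\sigma_2\Gamma)$ using (\ref{rulem}) in each tensor factor independently. The $\G$-factor gives $\chi_K L(\chi_{K\sigma_1 K})\chi_K L(\chi_{K\sigma_2 K})\chi_K = \sum_\tau n_\tau^{(1,2)}\,\chi_K L(\chi_{K\tau K})\chi_K$ exactly as in part (i), and the $\gop$-factor (note the \emph{right} convolutors, so products appear in the opposite order, which is why one needs $R(\chi_{K\sigma_1^{-1}K})R(\chi_{K\sigma_2^{-1}K})$) gives $\sum_\tau m_\tau^{(1,2)}\,\chi_K R(\chi_{K\tau^{-1}K})\chi_K$. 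The crucial verification is that the two sets of structure constants coincide, $n_\tau^{(1,2)} = m_\tau^{(1,2)}$, which is exactly the symmetry $[\Gamma\sigma_1\Gamma][\Gamma\sigma_2\Gamma]$ versus its image under the anti-automorphism $\sigma\mapsto\sigma^{-1}$ combined with $G$-bivariance of $\mu$, and this gives $\Psi_0(\Gamma\sigma_1\Gamma)\Psi_0(\Gamma\sigma_2\Gamma) = \sum_\tau n_\tau^{(1,2)}\Psi_0(\Gamma\tau\Gamma)$, matching the Hecke product.

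The main obstacle I anticipate is bookkeeping the groupoid twisting when an intermediate $\chi_K$ is absorbed across a $\gop$-translation: one must be careful that $(1\otimes\sigma_1^{-1})(\chi_{K\sigma_2 K}\otimes 1)(1\otimes\sigma_1) = \chi_{\sigma_1 K\sigma_2 K}\otimes 1$ and that after the final cutdown by $\chi_K$ only the part of $\sigma_1 K\sigma_2 K$ meeting $K\sigma_1\sigma_2 K$ survives, with the correct multiplicity. I would isolate this as a short lemma (a direct consequence of rule (\ref{rulem}) and the disjointness of the $s_i K_\sigma$) and then both (i) and (ii) follow by the same collapsing argument applied to the $\G$-factor, with the $\gop$-factor handled by the elementary group-element (resp.\ right-convolutor) multiplication.
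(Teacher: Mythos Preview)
Your overall strategy---expand via (\ref{rulem}) and track supports---is exactly what the paper does, but your execution has a structural error that would make the argument fail as written.

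The problem is your repeated separation of the computation into independent ``$\G$-factor'' and ``$G^{\rm op}$-factor'' (or ``$\G^{\rm op}$-factor'') pieces. The projection $\chi_K$ lives in $L^\infty(\G,\mu)$, and \emph{both} tensor legs act on it: the left convolutor $L(\chi_{K\sigma K})$ commutes with the $G^{\rm op}$ leg, while conversely $1\otimes\sigma^{-1}$ commutes with $L(\chi_A)\otimes 1$ but \emph{not} with $\chi_K$. So in $U(\sigma_1)U(\sigma_2)$ the middle $\chi_K$ is simultaneously moved by the $\G$-convolutor and by the right $G^{\rm op}$-translation, and you cannot isolate a clean expression like $\chi_K L(\chi_{K\sigma_1 K})\chi_K L(\chi_{K\sigma_2 K})\chi_K$ on one side. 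In particular your proposed conjugation $(1\otimes\sigma_1^{-1})(\chi_{K\sigma_2 K}\otimes 1)(1\otimes\sigma_1)$ is beside the point: $\chi_{K\sigma_2 K}$ enters only as the symbol of a left convolutor, which commutes with $G^{\rm op}$; what actually gets translated is the intermediate $\chi_K$. This confusion also undercuts your part (ii), where you treat the two legs as producing independent Hecke structure constants $n_\tau$, $m_\tau$---but there is only one $\chi_K$, so the supports are entangled.

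More seriously, your invocation of the Hecke product for part (i) is backwards. The convolution $L(\chi_{K\sigma_1 K})L(\chi_{K\sigma_2 K})$ equals $\sum_\tau n_\tau L(\chi_{K\tau K})$ with genuine multiplicities; it is \emph{not} $L(\chi_{K\sigma_1\sigma_2 K})$. The reason $U(\sigma_1)U(\sigma_2)$ collapses to a single $U(\sigma_1\sigma_2)$ is precisely the presence of the specific group element $\sigma_2^{-1}\sigma_1^{-1}$ on the $G^{\rm op}$ side together with the $\chi_K$ cutdowns: these jointly force a support condition that kills all but one piece. The paper makes this explicit by first rewriting $U(\sigma_1)=\sum_i (L(\chi_{K\sigma_1 s_i})\otimes\sigma_1^{-1})\chi_{s_i^{-1}K_{\sigma_1^{-1}}}$ over coset representatives, then computing $U(\sigma_2)U(\sigma_1)$ term by term and showing that the resulting supports $B_{i,j}=\{k\in K:\sigma_1 s_i k\in K\sigma_1,\ \sigma_2 r_j(\sigma_1 s_i k\sigma_1^{-1})\in K\sigma_2\}$ form a partition of $K$ that matches the partition $(v^{-1}K_{(\sigma_2\sigma_1)^{-1}})_v$ arising in $U(\sigma_2\sigma_1)$. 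That partition-matching is the actual content of the proof; your sketch does not reach it.
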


\begin{proof}
Fix an element $\sigma_1\in G$. Let $(s_i)$ be a family of coset representatives for
$\Gamma_{\sigma_1}= \sigma_1\Gamma\sigma_1^{-1}\cap \Gamma$ in $\Gamma$. For $\theta \in G$, let $K_\theta$ be the subgroup $\theta K\theta^{-1}\cap K$.
It is then obvious that
\begin{equation}\label{uiformula}
\begin{split}
U(\sigma_1) & =\sum_i(L(\chi_{K\sigma_1 s_i})\otimes \sigma_1^{-1})\chi_{K\cap s_i^{-1}\sigma_1^{-1}K\sigma_1} \\   &  =
\sum_i(L(\chi_{K\sigma_1 s_i})\otimes \sigma_1^{-1})
\chi_{ s_i^{-1}K_{\sigma_1^{-1}}}.
\end{split}
\end{equation}
Let $\sigma_2$ be another element in $G$. Let $(r_j)$ be a system of coset representatives
for $\Gamma_{\sigma_2}$ in $\Gamma$.
Then
$$U(\sigma_2)U(\sigma_1)=\chi_K\big[\sum_{i,j}(L(\chi_{K\sigma_2r_j\sigma_1 s_i})\otimes \sigma_1^{-1}\sigma_2^{-1})
\chi_{K\cap s_i^{-1}\sigma_1^{-1}K\sigma_1}\big].
$$
This is further equal to
\begin{equation}\label{bij}
\sum_{i,j}(L(\chi_{K\sigma_2r_j\sigma_1 s_i})\otimes \sigma_1^{-1}\sigma_2^{-1})\chi_{B_{i,j}},
\end{equation}
\noindent where
$$B_{i,j}=\{k \in K | \sigma_1 s_i k\sigma_1^{-1}\in K, \sigma_2 r_j [\sigma_1 s_i k\sigma_1^{-1}]\sigma_2^{-1}\in K\}.$$
Thus, for a fixed $i$, $B_{i,j} \subseteq s_i^{-1}K_{\sigma_1^{-1}} $ is the preimage, through the map
$$\sigma_1 s_i\  \cdot \ \sigma_1^{-1},$$ of the partition of the subgroup $K_{\sigma_1}$  given by
$$(r_j^{-1}K_{\sigma_2}\cap  K_{\sigma_1})_{i,j}.$$
Since the cosets $s_i^{-1}K_{\sigma_1^{-1}}$ are disjoint, $(B_{i,j})_{i,j}$ is a partition of $K$.

On the other hand a term of the form
\begin{equation}\label{pv}
\chi_K(L(\chi_{K\alpha}\otimes \sigma_1^{-1}\sigma_2^{-1})\chi_K,
\end{equation}
is different from zero if and only if $K\alpha$ is contained in $K\sigma_1\sigma_2K$. Hence $$\alpha=\sigma_2\sigma_1v,$$ where $v$ is a coset representative  for $\Gamma_{\sigma_2\sigma_1}$ in $\Gamma$.
Thus a non-zero term  in formula (\ref{pv}) is necessarily of the form:
\begin{equation}\label{pvv}
L(\chi_{Kv}\otimes \sigma_1^{-1}\sigma_2^{-1})\chi_{v^{-1}K_{(\sigma_2\sigma_1)^{-1}}}.
\end{equation}

When $v$ runs over the set of coset representatives for $\Gamma_{\sigma_2\sigma_1}$ in $\Gamma$, the sum of the terms in formula (\ref{pvv}) is
$$U(\sigma_2\sigma_1)= \chi_K(L(\chi_{K\sigma_1\sigma_2K}\otimes \sigma_1^{-1}\sigma_2^{-1})\chi_K.$$

Since both families $(v^{-1}K_{(\sigma_2\sigma_1)^{-1}})_v$ and $(B_{i,j})_{i,j}$  are partitions of $K$,
it follows that sum of the terms in formula (\ref{bij}) is also  equal to $U(\sigma_2\sigma_1)$, and hence $U$ is a unitary representation of $G$.

The second formula is proven by a similar argument (see also \cite{Ra5}) noting that using the above notations, one has that
$$\chi_K(L(\chi_{K\sigma K})\otimes R(\chi_{K\sigma^{-1} K}))\chi_K=
\sum_{i,j} L(\chi_{K\sigma s_i})\otimes R(\chi_{s^{-1}_j\sigma^{-1} K})\chi_{K\cap s^{-1}_i \sigma^{-1} K\sigma s_j}.$$
\end{proof}

\begin{defn}\label{cansos} Consider the following operator system inside $C^\ast_{\rm red}(\G)$:
$$\mathcal E=\ Sp\ \{ L(\chi_{\sigma_1K}) L(\chi_{K\sigma_2})| \sigma_1, \sigma_2\}\subseteq
C^\ast_{\rm red}(\G).$$
Obviously the Hecke algebra of double cosets
$$\H_0=\ Sp \ \{ L(\chi_{K\sigma K})| \sigma \in G\},$$
is a subspace of $\mathcal E$.

Using the product in the algebra $C^\ast_{\rm red}(\G)$, it follows that every double coset
$\chi_{K\sigma K}$ acts on $\mathcal E$, extending by linearity the correspondence
\begin{equation}\label{sosmult}
L(\chi_{\sigma_1K}) L(\chi_{K\sigma_2})\rightarrow
L(\chi_{\sigma_1K})L(\chi_{K\sigma K})L(  \chi_{K\sigma_2})\in \mathcal E,\quad \sigma_1,\sigma_2\in G.
\end{equation}

%Let
%$$\mathcal F= \ Sp L^\infty(\G,\mu) \mathcal E L^\infty(\G,\mu).$$

\end{defn}

\begin{defn}\label{somap}
Let $\mathcal U$ be an abstract involutive algebra over $\C$. Assume that
$\mathcal U$ contains $L^\infty(\G,\mu)$ and an involutive subalgebra $\mathcal V$ such that
$$\mathcal U=\  Sp\ [\mathcal V \cdot L^\infty(\G,\mu)]= \  Sp\   [L^\infty(\G,\mu) \cdot \mathcal V].$$
We say that a linear map $\Phi$ defined on $\mathcal A_0$ with values into $\mathcal U$ is $\S\O$-system map if the following properties hold true:

\item{ (i)}  The restriction of $\Phi$ preserves the operation described in Definition \ref{cansos}. Thus we assume that  for all $\sigma,\sigma_1,\sigma_2\in G$ we have
\begin{equation}\label{sospreserving}
\Phi (L(\chi_{\sigma_1K})L(\chi_{K\sigma K})L(  \chi_{K\sigma_2}))=\Phi(L(\chi_{\sigma_1K})) \Phi(L(\chi_{K\sigma K})) \Phi(L(\chi_{\sigma_2K})).
\end{equation}

\item{(ii)} The map $\Phi$ is $\ast$-preserving, that is, for all $\sigma \in G$, we have
$$\Phi(\chi_{K\sigma})^\ast=\Phi(\chi_{\sigma K}).$$

\item{(iii)} The map $\Phi$ is preserving the support in $L^\infty(\G,\mu)$, that is:
\begin{equation}\label{support}
\Phi(f_1X f_2)=f_1 \Phi(X) f_2, \quad  f_1,f_2 \in L^\infty(\G,\mu), X\in \mathcal E.
\end{equation}

\end{defn}

We recall the context from the paper \cite{Ra2}. Let $\pi$ be a unitary  representation of $G$ into a Hilbert space $H$.
Assume that $\pi|_\Gamma$ admits a closed  subspace $L\subseteq H$ such that
$$\pi(\gamma)L\perp L,\quad  \gamma\in\Gamma\setminus \{e\},$$
\noindent  and
$$\overline {\ Sp\ \{\pi(\gamma)L| \gamma \in \Gamma\} }=H.$$
We will call such a subspace a $\Gamma$-wandering generating subspace of $H$. In particular $\pi|_\Gamma$ is a multiple of the left regular representation $\lambda_\Gamma$
with multiplicity equal to dim$_{\mathbb C}L$. Assuming the above conditions and assuming that dim$_{\mathbb C}L$ is a finite integer, the following proposition was proved in \cite{Ra2}.
Let $\rho_\Gamma:\Gamma\rightarrow \mathcal U(\ell^2(\Gamma))$ be the left regular representation of $\Gamma$.
Let $\rho_G$ be the right regular representation of $G$. Let $\mathcal R(G)$, respectively $\mathcal R(\Gamma)$, be the II$_1$ von Neumann algebras generated by the right regular representations
of $G$ and $\Gamma$ respectively.

\begin{prop}\label{veche}
 (see \cite{Ra2}, \cite{Ra5}). Let $P_L$ be the orthogonal projection from $H$ onto $L$. For $A$ a coset of a subgroup in $\S$ define
\begin{equation}\label{generalos}
\Phi_\pi(L(\chi_A))=\sum_{\theta\in A}\rho(\theta)\otimes P_L\pi(\theta)P_L
\end{equation}
Then $\Phi$ takes values in $\mathcal R(G)\otimes B(L)$ and $\Phi|_\mathcal E$ is a representation of the operator system $\mathcal E$,
that  verifies properties (i) and (ii) from Definition \ref{somap}.

\end{prop}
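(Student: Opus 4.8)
The plan is to reduce every assertion to elementary bookkeeping with the matrix coefficients of $\pi$ relative to the wandering subspace $L$, the only substantial analytic point being the boundedness of the operators produced, which I will import from \cite{Ra1}, \cite{Ra2}. First I would use the hypothesis that $L$ is $\Gamma$-wandering and generating to fix an isometry $V\colon L\to H$ with $V^\ast V=1_L$, $VV^\ast=P_L$, together with a unitary identification $H\cong\ell^2(\Gamma)\otimes L$ under which $\pi(\gamma)=\lambda_\Gamma(\gamma)\otimes 1_L$ for $\gamma\in\Gamma$ and $V\zeta=\delta_e\otimes\zeta$. Setting $c(\theta)=V^\ast\pi(\theta)V\in B(L)$ for $\theta\in G$, the defining formula (\ref{generalos}) reads $\Phi_\pi(L(\chi_A))=\sum_{\theta\in A\cap G}\rho(\theta)\otimes c(\theta)$; this is defined coefficientwise, hence extends consistently and linearly to $\mathcal A_0$. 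Two facts will be used throughout: $c(\gamma)=\delta_{\gamma,e}1_L$ for $\gamma\in\Gamma$, and more generally $c(\gamma_1\theta\gamma_2)$ is the $(\gamma_1^{-1},\gamma_2)$ block of $\pi(\theta)$ in the matrix picture; and the telescoping identity
\begin{equation*}
\sum_{\gamma\in\Gamma}\pi(\gamma)P_L\pi(\gamma)^\ast=\sum_{\gamma\in\Gamma}P_{\pi(\gamma)L}=1_H,
\end{equation*}
which is precisely the statement that $L$ is wandering and generating (orthogonality of the $\pi(\gamma)L$ gives the partition, density gives that the sum is $1_H$).

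For the claim that $\Phi_\pi(L(\chi_A))\in\mathcal R(G)\,\overline{\otimes}\,B(L)$ I would first record the $L^2$-estimate: writing $A\cap G=\sigma\Gamma_0\subseteq\sigma\Gamma$ and letting $(a_\gamma)_{\gamma\in\Gamma}$ be the $e$-th block row of the unitary $\pi(\sigma)$,
\begin{equation*}
\sum_{\theta\in A\cap G}\|c(\theta)\|_{\mathrm{HS}}^2\le\sum_{\gamma\in\Gamma}\|a_\gamma\|_{\mathrm{HS}}^2=\operatorname{Tr}\Big(\sum_{\gamma\in\Gamma}a_\gamma a_\gamma^\ast\Big)=\operatorname{Tr}(1_L)=\dim_{\mathbb C}L,
\end{equation*}
using the co-isometry relation $\sum_\gamma a_\gamma a_\gamma^\ast=1_L$ for $\pi(\sigma)$. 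This already shows that the series defines a closed densely defined operator affiliated with $\mathcal R(G)\,\overline{\otimes}\,B(L)$, in fact an $L^2$-vector for the trace $\tau_{\mathcal R(G)}\otimes\operatorname{Tr}$. Genuine boundedness — the one point that is not formal — I would deduce from the corresponding boundedness of the elements $t^A$ established in \cite{Ra1}, \cite{Ra2} (recalled around formula (\ref{t}) and in Section \ref{thesplitting11}), applied entrywise to the finite matrix of scalar coefficients of $\pi$.

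For property (i) of Definition \ref{somap}, the multiplication rules for convolutors of coset characteristic functions (compare (\ref{rulem}), with $\mu(K)=1$) give $L(\chi_{\sigma_1K})L(\chi_{K\sigma K})L(\chi_{K\sigma_2})=L(\chi_{\sigma_1K\sigma K\sigma_2})$, so it suffices to show that $\Phi_\pi$ is multiplicative on products $L(\chi_B)L(\chi_C)$ with $B$, $C$ adapted to a common $K_0\in\S$. Expanding,
\begin{equation*}
\Phi_\pi(L(\chi_B))\,\Phi_\pi(L(\chi_C))=\sum_\theta\rho(\theta)\otimes\Big(\sum_{\theta_1\theta_2=\theta}c(\theta_1)c(\theta_2)\Big),
\end{equation*}
and since $c(\theta_1)c(\theta_1^{-1}\theta)=V^\ast\pi(\theta_1)P_L\pi(\theta_1)^\ast\pi(\theta)V$, the inner sum equals $V^\ast\big(\sum_{\theta_1}\pi(\theta_1)P_L\pi(\theta_1)^\ast\big)\pi(\theta)V$, where $\theta_1$ runs over a single left $\Gamma$-coset (or over the empty set); by the translation-invariance of the telescoping identity this collapses to $(\chi_B\ast\chi_C)(\theta)\,c(\theta)$, whence $\Phi_\pi(L(\chi_B))\Phi_\pi(L(\chi_C))=\Phi_\pi(L(\chi_B)L(\chi_C))$. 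Applying this twice, with $\Phi_\pi(L(\chi_{K\sigma K}))=\sum_{\theta\in\Gamma\sigma\Gamma}\rho(\theta)\otimes c(\theta)$ inserted in the middle, yields precisely (\ref{sospreserving}), the vector-valued form of $t^{\sigma_1\Gamma}t^{\Gamma\sigma_2}=t^{\sigma_1\Gamma\sigma_2}$. Property (ii) is then immediate from $c(\theta)^\ast=V^\ast\pi(\theta)^\ast V=c(\theta^{-1})$ and $\rho(\theta)^\ast=\rho(\theta^{-1})$: adjointing $\sum_{\theta\in\Gamma\sigma}\rho(\theta)\otimes c(\theta)$ and reindexing $\theta\mapsto\theta^{-1}$, which carries $\Gamma\sigma$ onto $\sigma^{-1}\Gamma$, gives $\Phi_\pi(L(\chi_{K\sigma}))^\ast=\Phi_\pi(L(\chi_{\sigma^{-1}K}))$. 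Thus, modulo the boundedness statement of the second paragraph, every identity reduces to the single fact that $\{P_{\pi(\gamma)L}\}_{\gamma\in\Gamma}$ is a partition of unity of $H$; that boundedness is the only real obstacle.
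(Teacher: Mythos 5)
The paper itself offers no proof of Proposition \ref{veche}: it is imported verbatim from \cite{Ra2}, \cite{Ra5} ("the following proposition was proved in \cite{Ra2}"), so your reconstruction cannot be compared line by line with an argument in the text. On its own terms your proposal is essentially the right argument, and it is the natural one: identify $H\cong\ell^2(\Gamma)\otimes L$, read $c(\theta)=P_L\pi(\theta)P_L$ as the $(e,e)$ block of $\pi(\theta)$, and reduce everything to the partition-of-unity identity $\sum_{\gamma\in\Gamma}\pi(\gamma)P_L\pi(\gamma)^\ast=1_H$ together with its translates $\sum_{\theta_1\in\sigma\Gamma}\pi(\theta_1)P_L\pi(\theta_1)^\ast=1_H$; the Hilbert--Schmidt estimate from the block row of the unitary $\pi(\sigma)$ correctly places $\Phi_\pi(L(\chi_A))$ in $L^2(\mathcal R(G)\otimes B(L))$, and deferring genuine boundedness to \cite{Ra1}, \cite{Ra2} is consistent with what the paper does, since that is exactly what it cites. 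Your verification of property (ii) via $c(\theta)^\ast=c(\theta^{-1})$ is fine (note the paper's Definition \ref{somap}(ii) has an evident typo; the correct identity is $\Phi(L(\chi_{K\sigma}))^\ast=\Phi(L(\chi_{\sigma^{-1}K}))$, which is what you prove).

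One step is overstated and, taken literally, false: the reduction "it suffices to show that $\Phi_\pi$ is multiplicative on products $L(\chi_B)L(\chi_C)$ with $B$, $C$ adapted to a common $K_0\in\S$". For a proper finite-index subgroup $K_0\subsetneq K$, say $B=sK_0$, $C=K_0t$, the fiber $\{\theta_1\in B\cap G:\theta_1^{-1}\theta\in C\cap G\}$ is a coset $s\Gamma_0$ of $\Gamma_0=K_0\cap G$, and $\sum_{\gamma_0\in\Gamma_0}\pi(\gamma_0)P_L\pi(\gamma_0)^\ast$ is the projection onto $\overline{{\rm Sp}}\,\{\pi(\gamma_0)L\}$, not a scalar; so $\Phi_\pi(L(\chi_B))\Phi_\pi(L(\chi_C))\neq\Phi_\pi(L(\chi_B)L(\chi_C))$ in general. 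This is precisely why the paper's Definition \ref{somap}(i) and the operator system $\mathcal E$ only involve cosets and double cosets of $K$ itself. Your actual computation survives because in the two multiplications you perform ($\sigma_1K$ against $K\sigma K$, and then $\sigma_1K\sigma K$ against $K\sigma_2$) the fiber over each fixed $\theta$ is a full left translate $\theta_0\Gamma$ of $\Gamma$ or empty --- exactly the proviso you insert parenthetically --- so the telescoping identity applies and (\ref{sospreserving}) follows. You should simply delete the "common $K_0\in\S$" reduction and state the multiplicativity only in the form you actually use; with that correction, and with the boundedness imported as in \cite{Ra1}, \cite{Ra2}, the proof is sound.
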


Using this, we can easily show that one can construct a representation into $\mathcal R(G)\otimes B(L)\cong \mathcal L(G)\otimes B(L) $ of the unitary representation and the
Hecke algebra representation constructed in Theorem \ref{absrep}. Using the notations from the previous proposition we can prove the following:

\begin{thm}\label{abstractsetting}
(i) For every $\sigma\in G$, the formula
$$U(\sigma)=\chi_K(\Phi_\pi(\chi_{\Gamma\sigma\Gamma})\otimes \sigma^{-1})\chi_K\in
\mathcal L((G\otimes G^{\rm op})\rtimes K)\otimes B(L)$$
defines a unitary representation $U$ of $G$ into $\mathcal L((G\otimes G^{\rm op})\rtimes K)\otimes B(L)$.

\noindent (ii) The mapping
$$T(\chi_{\Gamma\sigma\Gamma})=\chi_K(\Phi(\chi_{\Gamma\sigma\Gamma})\otimes \Phi(\chi_{\Gamma\sigma^{-1}\Gamma}) )\chi_K\in \mathcal L((G\otimes G^{\rm op})\rtimes K)\otimes B(L) $$
extends by linearity to a representation of the Hecke algebra $\mathcal H_0$ into
$\mathcal L((G\otimes G^{\rm op})\rtimes K)\otimes B(L)$.
\end{thm}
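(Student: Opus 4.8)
The plan is to mimic the proof of Theorem \ref{absrep}, replacing the scalar matrix coefficients $\langle\pi(\theta)1,1\rangle$ implicit in the convolutor $L(\chi_{K\sigma K})$ by the operator-valued matrix coefficients $P_L\pi(\theta)P_L\in B(L)$, and then observing that the combinatorial bookkeeping of cosets underlying the proof of Theorem \ref{absrep} is unaffected by this substitution because the partitions of $K$ involved are intrinsic to the group data. First I would record the key algebraic input from Proposition \ref{veche}: the map $\Phi_\pi$ sends $L(\chi_A)$, for $A$ a coset of a subgroup in $\S$, to $\sum_{\theta\in A}\rho(\theta)\otimes P_L\pi(\theta)P_L$, it is $\ast$-preserving in the sense $\Phi_\pi(\chi_{K\sigma})^\ast=\Phi_\pi(\chi_{\sigma K})$, it preserves the support in $L^\infty(\G,\mu)$ (property (iii) of Definition \ref{somap}), and crucially it preserves the $\S\O$-system multiplication (\ref{sospreserving}), i.e. $\Phi_\pi(L(\chi_{\sigma_1K})L(\chi_{K\sigma K})L(\chi_{K\sigma_2}))=\Phi_\pi(L(\chi_{\sigma_1K}))\,\Phi_\pi(L(\chi_{K\sigma K}))\,\Phi_\pi(L(\chi_{\sigma_2K}))$. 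This last identity is exactly the operator-valued analogue of the identity $t^{\sigma_1\Gamma}t^{\Gamma\sigma_2}=t^{\sigma_1\Gamma\sigma_2}$ used throughout Section \ref{thesplitting11}.

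For part (i), I would fix $\sigma_1,\sigma_2\in G$, pick systems of coset representatives $(s_i)$ for $\Gamma_{\sigma_1}$ and $(r_j)$ for $\Gamma_{\sigma_2}$ in $\Gamma$, and expand $U(\sigma_2)U(\sigma_1)$ exactly as in formulae (\ref{uiformula})–(\ref{bij}) of the proof of Theorem \ref{absrep}, but carrying the $B(L)$-valued coefficient along. Because the $G^{\mathrm{op}}$-component of each $U(\sigma)$ is supported on a single group element and the $L^\infty(\G,\mu)$-supports $(B_{i,j})_{i,j}$ and $(v^{-1}K_{(\sigma_2\sigma_1)^{-1}})_v$ are both partitions of $K$ (this is the content of the coset combinatorics in Theorem \ref{absrep}, and it does not see the coefficients), the product collapses, by property (iii) of Definition \ref{somap} and the multiplicativity (\ref{sospreserving}), to $\chi_K(\Phi_\pi(\chi_{\Gamma\sigma_1\sigma_2\Gamma})\otimes(\sigma_2\sigma_1)^{-1})\chi_K=U(\sigma_2\sigma_1)$. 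Unitarity of each $U(\sigma)$ then follows from $U(\sigma)^\ast=U(\sigma^{-1})$, which is the $\ast$-preserving property of $\Phi_\pi$ together with $\chi_{K\sigma^{-1}K}=\chi_{(K\sigma K)^{-1}}$; alternatively one gets it directly from the relation $U(\sigma)U(\sigma^{-1})=U(e)=\chi_K$.

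For part (ii), the argument is the same as for the second assertion of Theorem \ref{absrep}: one writes
$$\chi_K(\Phi_\pi(\chi_{\Gamma\sigma\Gamma})\otimes\Phi_\pi(\chi_{\Gamma\sigma^{-1}\Gamma}))\chi_K=\sum_{i,j}\Phi_\pi(\chi_{K\sigma s_i})\otimes\Phi_\pi(\chi_{s_j^{-1}\sigma^{-1}K})\,\chi_{K\cap s_i^{-1}\sigma^{-1}K\sigma s_j},$$
and then checks that the product $T(\chi_{\Gamma\sigma_1\Gamma})T(\chi_{\Gamma\sigma_2\Gamma})$, multiplied out using the rule (\ref{rulem}) to move characteristic functions past convolutors, reorganizes (by the same pairing of cosets in the product of double cosets that governs the Hecke algebra multiplication) into $\sum_{\tau}m_\tau\,T(\chi_{\Gamma\tau\Gamma})$, where the structure constants $m_\tau$ are exactly those of $\mathcal H_0=\C(\Gamma\setminus G/\Gamma)$; this uses (\ref{sospreserving}) applied to both tensor legs and the fact that the partition of $K$ appearing on the left matches the one produced by the Hecke product. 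Linearity then extends $T$ to a $\ast$-representation of $\mathcal H_0$, with $\ast$-preservation coming again from property (ii) of Definition \ref{somap}.

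The main obstacle, and the step deserving the most care, is verifying that the coset/subgroup combinatorics of Theorem \ref{absrep} — specifically that the families $(B_{i,j})$ and $(v^{-1}K_{(\sigma_2\sigma_1)^{-1}})$ are partitions of $K$, and the analogous statement for the double-coset product in part (ii) — survives verbatim once we tensor with $B(L)$. This is really a statement that $\Phi_\pi$ on the relevant operator system $\mathcal E$ depends on $\pi$ only through the coefficients attached to individual group elements while the "which cosets combine with which" data is group-theoretic; the formal way to see this is to note that $\Phi_\pi$ is, by Proposition \ref{veche}, a genuine $\S\O$-system map in the sense of Definition \ref{somap}, so the identities (\ref{sospreserving}), (\ref{support}) and the $\ast$-property are precisely the three facts that were tacitly used (in the scalar case) in the proof of Theorem \ref{absrep}; invoking Definition \ref{somap} lets one transcribe that proof essentially line by line. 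The one genuinely new bookkeeping point is that the two tensor legs in part (ii) carry $\Phi_\pi(\chi_{\Gamma\sigma\Gamma})$ and $\Phi_\pi(\chi_{\Gamma\sigma^{-1}\Gamma})$ respectively, so one must confirm the coset indices on the two legs are linked by the same $K\cap s_i^{-1}\sigma^{-1}K\sigma s_j$ constraint displayed above — which is exactly what the $\chi_K\,\cdot\,\chi_K$ compression enforces.
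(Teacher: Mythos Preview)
Your proposal is correct and follows essentially the same approach as the paper: extend $\Phi_\pi$ to act as the identity on $L^\infty(\G,\mu)$, observe via Proposition \ref{veche} that the extension is an $\S\O$-system map in the sense of Definition \ref{somap}, and then note that the proof of Theorem \ref{absrep} uses only identities of that type, so it transfers verbatim. The paper states this in two sentences whereas you re-run the coset computations explicitly with $B(L)$-valued coefficients, but the underlying argument is the same.
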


\begin{proof} We extend $\Phi$  to the algebra generated by $\mathcal E$ and $L^\infty(\G,\mu)$ in $\mathcal A_0$. It is obvious that this can be done by letting $\Phi$ be equal to the identity on
$L^\infty(\G,\mu)$. Because of the formula (\ref {generalos}) and using the results from  Proposition
\ref{veche}, it follows that the extension of $\Phi$ verifies the conditions from Definition \ref{somap}.
Since the verifications of the statement of Theorem \ref{absrep} involve only identities of the type considered in Definition \ref{somap}, it follows that the extension of $\Phi$ to the corresponding algebras
translates the content of Theorem \ref{absrep} into the statement of the present theorem.
\end{proof}

\begin{rem}
Having constructed the unitary representation $U$ as in  part (iii) of Theorem \ref{split},  one may repeat ad-litteram, in this more general setting,  the proof of the statements from the previous sections.

The case where $\pi|_\Gamma$ is a finite, but not necessary  an integer multiple of the left regular representation, may be treated as in \cite{Ra2}.
One assumes that the representation $\pi$ admits a $G$-invariant subspace $H_0$ such that in the commutant of $\pi(\Gamma)$, the projection $P_0$ onto $H_0$ has trace $t$, a positive real number.
Then one considers the unitary representation $\pi_0(g)=P_0\pi(g)P_0$, $g\in G$, which corresponds to the case where the multiplicity of the left regular representation
$\lambda _G$ in $\pi_0|_{\Gamma}$ is equal to $t$ (in the sense of the Murray-von Neumann dimension theory).
In this case all the above arguments may be repeated, replacing formula (\ref{generalos}) by the formula
\begin{equation}\label{mostgeneral}
\Phi_\pi(L(\chi_A))=\sum_{\theta\in A}\rho(\theta)\otimes P_L\pi_0(\theta)P_L.
\end{equation}
The case where the representation $\pi$ (and hence $\pi_0$) is unitary, projective is treated as in \cite{Ra1} (see also \cite{Ra3}).
\end{rem}

We note a few remarks regarding the relation of this paper with the results in \cite {Ra5}.

\begin{rem}\label{relation1}
Consider a general representation $\pi_0$ on a Hilbert space $H_0$ such that $\pi_0|_\Gamma$ is a multiple of the left regular representation $\lambda_\Gamma$.
In \cite{Ra5} we proved that there is a corresponding representation $\overline{\pi_0}^{\rm ad}$ on a Hilbert space $\overline {H_0}^{\rm ad}$,
which encodes all the information on the action
of the representation $\pi_0$ on the extended vector space of vectors that are invariant to subgroups in $\S$.

The representation $\overline{\pi_0}^{\rm ad}$ is obtained through an adelic completion procedure, and therefore we will use the symbol $\overline{\ \cdot\ }^{\rm ad}$
to denote an object obtained through such a construction. For the conjugate Hilbert spaces and representation, we will use as usual the bar symbol.
The essential property of the representation $\overline{\pi_0}^{\rm ad}$ is the fact  that it extends to a C$^\ast$-representation,
also denoted by $\overline{\pi_0}^{\rm ad}$, of the amalgamated free product C$^\ast$-algebra
\begin{equation}\label{amalgamated}
C^\ast(\G)\ast_{C^\ast(K)} \big[C^\ast(K\rtimes L^\infty(K,\mu))\big]
\end{equation}
into $B(\overline {H_0}^{\rm ad})$.
It is proved in \cite{Ra5} that  the character of the representation $\overline{\pi_0}^{\rm ad}$  is determined completely by the character of the representation $\pi_0$.

In general, if $L$ is a Hilbert subspace such that $$H_0\cong L\otimes \ell^2(\Gamma)$$
\noindent  and $$\pi_0|_\Gamma\cong {\rm \ Id}\otimes \lambda_\Gamma,$$ then $L$ is identified with the subspace of $\Gamma$-invariant vectors. For $\Gamma_0\in \S$, the space
$H^{\Gamma_0}_0$ of $\Gamma_0$-invariant vectors is
\begin{equation}\label{hzero}
H^{\Gamma_0}_0\cong L\otimes \ell^2(\Gamma_0\backslash \Gamma).
\end{equation}
\noindent  The space $\overline{H_0}^{\rm ad}$ is then defined by
$$\overline{H_0}^{\rm ad}\cong L\otimes L^2(K,\mu).$$
The factor $L^\infty(K,\mu)$ in formula (\ref{amalgamated}) acts in this representation by multiplication on the factor $L^2(K,\mu)$ in formula (\ref{hzero}).

In the case presented in this work, for a unitary representation $\pi$ as in the statement of Theorem \ref{split},  the unitary representation $\pi_0$ is Ad($\pi(g))_{g\in G}$
acting on the space of Hilbert-Schmidt operators on the Hilbert space $H_\pi$. Equivalently, $\pi_0$ is the unitary representation $\pi\otimes \overline{\pi}$ acting on
$H_\pi\otimes\overline {H_{\pi}}$. The hypothesis that $D_\pi=1$ corresponds to the fact that
$$H_0=H_{\pi}\otimes \overline {H_{\pi}}\cong \ell^2(\Gamma)\otimes \ell^2(\Gamma).$$
Then
$$\overline{H_0}^{\rm ad}\cong \ell^2(\Gamma)\otimes L^2(K,\mu).$$
\noindent Clearly, since
$$L^2(P,\tau)=L^2(\mathcal L(\Gamma\rtimes L^\infty(K,\mu)))\cong \ell^2(\Gamma)\otimes L^2(K,\mu),$$
\noindent  we obtain (using the algebras introduced in formula (\ref{asigma})), that
$$H^{\Gamma_\sigma}_0\cong L^2(\mathcal A_{\sigma \Gamma},\tau)\cong \ell^2(\Gamma)\otimes \ell^2(\Gamma_\sigma\backslash \Gamma),\quad \sigma \in G,$$
\noindent and
$$\overline{H_0}^{\rm ad}\cong\ell^2(\Gamma)\otimes L^2(K,\mu)\cong L^2(P,\tau).$$
The homomorphism $\alpha: G \rightarrow \operatorname{Aut} (P)$ which,  using conditional expectation on the algebras $\mathcal A_{\sigma \Gamma}, \sigma\in G$ ,
extends to C$^\ast(\G)$,  yields a  unitary representation $U_\alpha$ of $\G$ into
$L^2(P,\tau)$.

Then,  using the above isomorphism the representation $U_\alpha$ is unitarily equivalent to the unitary representation $\overline {\pi_0}^{\rm ad}$.
 \end{rem}

 We use the context from the Remark \ref{relation1} to state the following conjecture:

\begin{rem}
 %Let $1\in H_\pi$ be a cyclic trace vector in the space $H_\pi$, in the case $D_\pi=1$
The statement of the  Ramanujan-Petersson conjectures may be generalized (\cite {Ra1}) to the question  regarding weak containment of the unitary representation
$\overline {\pi_0}^{\rm ad}$ of $G$ (equivalently of $\G$) in the left regular representation of $\G$ (restricted to $G$ if we analyze only representations of $G$).
Thus an equivalent form of the above conjecture is the question whether the unitary representation $U_\alpha|_{L^2(P)\ominus \mathbb C 1}$ is weakly contained in the
restriction of the left regular representation of $\G$ to $G$.

Recall, as observed in Theorem \ref{liftout}, that factorizing the homomorphism $\alpha$ to the group
${\rm Out}(P)={\rm Aut}(P)/{\rm Int}(P)$ one obtains a canonical homomorphism $G \rightarrow {\rm Out}(P)$ that is independent of choices.
We also recall that  the  canonical Cartan subalgebra  in the II$_1$ factor $\mathcal L(\Gamma\rtimes L^\infty(K,\mu))$ is unique, up to inner automorphisms (\cite{Op}, see also \cite{io}, \cite{pv}).

We conjecture that the property of weak containment in the left regular representation introduced above is independent of a cocycle perturbation
of the representation $U_\alpha$, as in Theorem  \ref{unicity}.

\end{rem}

{\bf Acknowledgment}. The author is deeply indebted to Ryszard Nest, Nicolas Monod and Stephan Vaes  for several comments on this paper.
The author is  deeply indebted to Florin Boca for several comments and for help with reorganizing the exposition of the paper.
The author is deeply indebted to Narutaka Ozawa for putting out his notes (\cite{Ra3}) on the paper \cite{Ra1},  and to allow their publication.

\end{document}